\date{}
\newcommand\N{\mathbb{N}} 
\newcommand\Z{\mathbb{Z}} 
\newcommand\R{\mathbb{R}} 
\newcommand\C{\mathbb{C}} 
\newcommand\T{\mathbb{T}} 
\newcommand\D{\mathbb{D}}
\newcommand\EE{\varepsilon}
\newcommand\xx{{\boldsymbol x}}        
\newcommand\bxx{\bar{\boldsymbol x}}   
\newcommand\uu{{\boldsymbol u}}        
\newcommand\tuu{\tilde{\boldsymbol u}} 
\newcommand\bcdot{{\boldsymbol \cdot}}        
\newcommand\CC{\mathcal{C}}            
\newcommand\der{\textrm{d}}            
\newcommand\vt{\vartheta}              
\newcommand\gperp{\nabla^{\perp}}      
\newcolumntype{C}[1]{>{\centering\arraybackslash}m{#1}}
\theoremstyle{plain}
\numberwithin{equation}{section}
\newtheorem{theorem}{Theorem}[section]
\newtheorem{proposition}[theorem]{Proposition}
\newtheorem{lemma}[theorem]{Lemma}
\newtheorem{remark}[theorem]{Remark}
  \title[]{Imperfect  bifurcation  for  the quasi-geostrophic shallow-water equations}
\author[D. G.  Dritschel]{David Gerard Dritschel}
\address{Mathematical Institute, University of St Andrews\\
St Andrews KY16 9SS\\ UK}
\email{david.dritschel@st-andrews.ac.uk}
\author[T. Hmidi]{Taoufik Hmidi}
\address{Univ Rennes, CNRS, IRMAR - UMR 6625, F-35000 Rennes, France}
\email{thmidi@univ-rennes1.fr }
 \author[C. Renault]{Coralie Renault}
\address{Univ Rennes, CNRS, IRMAR - UMR 6625, F-35000 Rennes, France}
\email{Coralie.Renault@ens-rennes.fr}
\begin{document}

\begin{abstract}

We study analytical and numerical aspects of the bifurcation diagram
of simply-connected rotating vortex patch equilibria for the
quasi-geostrophic shallow-water (QGSW) equations.  The QGSW equations
are a generalisation of the Euler equations and contain an additional
parameter, the Rossby deformation length $\EE^{-1}$, which enters in
the relation between streamfunction and (potential) vorticity.  The
Euler equations are recovered in the limit $\EE\to 0$.  We prove,
close to circular (Rankine) vortices, the persistence of the
bifurcation diagram for arbitrary Rossby deformation length.  However
we show that the two-fold branch, corresponding to Kirchhoff ellipses
for the Euler equations, is never connected even for small values
$\EE$, and indeed is split into a countable set of disjoint connected
branches.  Accurate numerical calculations of the global structure of
the bifurcation diagram and of the limiting equilibrium states are
also presented to complement the mathematical analysis.
   
\end{abstract}

\newpage 

\maketitle{}
\tableofcontents

\section{Introduction}
In this paper we investigate new aspects of simply-connected vortex
patch relative equilibria satisfying the quasi-geostrophic shallow
water (QGSW) equations.  These equations are derived asymptotically
from the rotating shallow water equations, in the limit of rapid
rotation and weak variations of the free surface \cite{Vallis}.  A key
property of these equations, and of the parent shallow water
equations, is the {material} conservation of `potential
vorticity', $q$, a quantity which remains unchanged following fluid
particles.  The QGSW equations are given by
\begin{equation}\label{sqg}
\left\lbrace
\begin{array}{l}
\partial_t q + v \cdot \nabla q=0, \text{ }(t,x)\in \R_+ \times \R^2, \\
v=\nabla^\perp\psi,\\
\psi=(\Delta-\EE^2)^{-1}q,\\
q_{|t=0}=q_0
\end{array}
\right.
\end{equation}
where $v$ refers to the velocity field, $\psi$ is the streamfunction, 
$\nabla^\perp=(-\partial_2,\partial_1)$ and $\EE\in \R$.  The
 parameter $\EE$,  when it is positive, is known as the inverse `Rossby deformation length', a
natural length scale arising from a balance between rotation and
stratification.  Small $\EE$ physically corresponds to a free surface
which is nearly rigid.  For $\EE=0$, we recover the two-dimensional
Euler equations.\\

Historically, substantially more is known about vortex patch relative
equilibria for the Euler equations than for the QGSW equations.  The
most famous example is the exact analytical solution for a rotating
ellipse, found by Kirchhoff in 1876 \cite{Kirchhoff}.  This is in fact
a non-trivial {family} of solutions bifurcating from Rankine's
vortex, the circular patch (all axisymmetric vorticity distributions
are in equilibrium, by symmetry).  Kirchhoff's elliptical vortex is
linearly stable when its aspect ratio $\lambda\in[1/3,1]$, i.e.\ when
it is not strongly deformed from a circle, as was shown in 1893 by
Love \cite{Love}.  Love also discovered that there is a sequence of
instabilities, at $\lambda=\lambda_m$ for $m=3,\,4,\,\ldots$, ordered
by the elliptical coordinate azimuthal wavenumber $m$.  That is,
$\lambda_3>\lambda_4>\ldots$, with $\lambda_m\to0$ as $m\to\infty$
\cite{D86}.  Later, it was discovered that new branches of vortex
patch equilibria bifurcate from each of these instability points
\cite{Cerr,Luz}, revealing that the equilibrium solutions of Euler's
equations are exceedingly rich and varied. Analytical proofs and results on the boundary regularity were given in \cite{3,4,20,19}.\\

The ellipse is not the only solution which bifurcates from a circle.
Deem and Zabusky in 1978 \cite{11} discovered, by numerical methods,
$m$-fold symmetric vortex patch solutions for $m>2$ which are the
generalizations of Kirchhoff's ellipse.  The near-circular solutions
resemble small-amplitude boundary waves.  Their existence was proved
analytically by Burbea \cite{2} using a conformal mapping technique and bifurcation theory.
At larger wave amplitudes, the outward protruding crests of the waves
sharpen, ultimately limiting in a shape with right-angle corners.
These corners coincide with hyperbolic stagnation points in the
rotating reference frame in which the patch is steady.  
From an analytical point of view, this problem is still open and
some progress has been recently made in \cite{HMW}.\\

In the present study, we generalize these vortex patch solutions
further by exploring how they are altered for the QGSW equations when
$\EE\in \R^*$ (the Euler case corresponds to $\EE=0$).  We focus on
$\EE\ll1$ where analytical progress can be made, but also present
numerical solutions that confirm the analysis and extend it to larger
positive $\EE$.  A surprising discovery is a new branch of $m$-fold symmetric
solutions which do {not} bifurcate from the circle (below we
exhibit the case $m=3$).  This is an isolated branch and exists even
for $\EE=0$.\\

Some relevant vortex patch solutions for the QGSW equations are
already available in the literature.  Polvani (1988) \cite{Polvani88}
and Polvani, Zabusky and Flierl (1989) \cite{Polvani89} computed the
generalization of Kirchhoff's ellipse for various values of $\EE$ (as
well as for doubly-connected patches and for multi-layer flows).
Later, P{\l}otka and Dritschel (2012) \cite{pd12} carried out a more
comprehensive analysis of the generalized Kirchhoff ellipse solutions,
including linear stability and nonlinear evolution.  In these studies,
solutions were obtained numerically, starting near the known circular
patch solution.  In all cases, the limiting states were found to be
dumbbell-shaped, specifically two symmetrical teardrop-shaped patches
connected at a single point.  This is in stark contrast with
Kirchhoff's ellipse, which continues as a solution at arbitrarily
small aspect ratio $\lambda$.\\

In fact, as shown in this study, there are other two-fold symmetric
vortex patch solutions of the QGSW equations.  However, these are
{not} on the branch of solutions connected to the circular patch.
We demonstrate numerically, then prove mathematically, that the
Kirchhoff branch for $\EE=0$ splits up into many disconnected branches
for any $\EE>0$.  The limiting dumbbell state found in the above
studies is directly related to the limiting state of {just one}
of the solution branches bifurcating from the Kirchhoff ellipse at
$\lambda=\lambda_4$ found by Luzzatto-Fegiz and Williamson (2010)
\cite{Luz} for $\EE=0$.  The other branch lies on a disconnected
branch of solutions when $\EE>0$.  The same behavior appears to occur
near all other even bifurcations, i.e.\ near
$\lambda=\lambda_6,\,\lambda_8,\,\ldots$, though we only have
numerical evidence for this at present.\\

The plan of the paper is as follows.  In section \ref{sec:num1}, we
describe the numerical method employed to compute the vortex patch
solutions.  Results are then presented in section \ref{sec:num2}, both
for the two-fold and the three-fold singly-connected vortex patch
solutions.  The remainder of the paper is devoted to proving the
existence of $m$-fold solutions (in section \ref{sec:mfold}), and
proving that the two-fold solution branch splits near
$\lambda=\lambda_4$ when $|\EE | \ll1$ (in section \ref{sec:2fold}).  We
conclude in section \ref{sec:conc} and suggest ideas for future work.

\section{Numerical approach}
\label{sec:num1}

To navigate along equilibrium solution branches up to limiting states
having corners on the patch boundaries, it is necessary to employ a
robustly convergent numerical method.  Here we follow Luzzatto-Fegiz
and Williamson (2011) \cite{Luzjcp}, employing a Newton iteration 
to find corrections to the boundary shape from the exact condition
\begin{equation}\label{psi1}
\psi(\xx)-\tfrac12\Omega|\xx|^2=C
\end{equation}
expressing the fact that the streamfunction is constant in a frame
of reference rotating at the angular velocity $\Omega$.  Here $C$ is
a constant, and in this section only $\xx$ refers to the vector
position of a point with coordinates $(x,y)$.  For a 
single patch of uniform (potential) vorticity $q=2\pi$, the
streamfunction $\psi$ is determined from the contour integral
\begin{equation}\label{psi2}
\psi(\xx)=\oint_{\CC}H(\EE r)\left((x'-x)\der{y'}-(y'-y)\der{x'}\right)
\end{equation}
where $\CC$ refers to the boundary of the patch,
$r=|\xx'-\xx|$ is the distance between $\xx'$ and $\xx$, 
and the function $H$ is given by
\begin{equation}\label{psi2bis}
H(z)=\frac{z K_1(z)-1}{z^2}
\end{equation}
where $K_1$ is the modified Bessel function of order 1 \cite{pd12}.
In the limit $z\to 0$, relevant to the Euler equations, $H(\EE r)$
reduces to $\tfrac14(\ln{r^2}-1)$ plus an unimportant constant.\\

We start with a guess $\xx=\bxx(\vt)$ for the shape of $\CC$.  Here,
following \cite{D95}, we use a special coordinate $\vt$ proportional
to the travel time of a fluid particle around $\CC$ from some
pre-assigned starting point.  This coordinate simplifies the
equations which must be solved at each step of the iterative
procedure correcting the boundary shape.  The travel time $t$ is
computed from
\begin{equation}\label{tt1}
t=\int_0^s \frac{\der{s'}}{|\tuu(s')|}
\end{equation}
where $s$ is arc length measured from the starting point, and
$\tuu=\gperp(\psi-\tfrac12\Omega|\xx|^2)$ is the velocity in the
rotating frame of reference
(here $\gperp=(-\partial/\partial{y},\partial/\partial{x})$ is
the skewed gradient operator).  The integral around the entire
boundary, $T_p$, gives the particle orbital period, from which
we define the particle frequency $\Omega_p=2\pi/T_p$, a particularly
useful diagnostic in which to characterize the equilibrium patch
solutions.  From $\Omega_p$, we define the travel-time coordinate
as $\vt=\Omega_p t$, where $t$ is given in (\ref{tt1}).  Notably,
at equilibrium, $\tuu$ is tangent to $\CC$.  During the iteration
to find an equilibrium, this will not be exactly true, but the small
discrepancy has no significant effect on the convergence of the
numerical procedure.\\

We correct the previous guess $\bxx$ at each step of the iteration
by taking
\begin{equation}\label{newguess}
\xx=\bxx+\frac{\hat\eta(\vt)(\bar{y}_\vt,-\bar{x}_\vt)}{|\bxx_\vt|^2}
\end{equation}
where a $\vt$ subscript denotes differentiation with respect to $\vt$.
This represents a normal perturbation to the previous boundary, though
the scalar function $\hat\eta$ has units of area.  This choice of
perturbation leads to the simplest form for the linearized
approximation to (\ref{psi1}) and (\ref{psi2}),
\begin{equation}\label{newguessbis}
  \Omega_p\hat\eta(\vt)
  -\int_0^{2\pi}\hat\eta(\vt')K_0(\EE|\bxx(\vt')-\bxx(\vt)|)\der\vt'
  -\tfrac12\hat\Omega|\bxx(\vt)|^2  
  =C-\bar\psi(\vt)+\tfrac12\bar\Omega|\bxx(\vt)|^2
  \equiv R(\vt)
\end{equation}
where terms only up to first order in $\hat\eta$ have been retained, and
we have additionally included a perturbation $\hat\Omega$ in the rotation
rate $\Omega=\bar\Omega+\hat\Omega$.  Above, $K_0$ is the modified Bessel
function of order 0, and notably $K_0(\EE r)$ reduces to $-\ln{r}$
plus an unimportant constant in the limit $\EE\to 0$.  On the right hand
side of (\ref{newguessbis}), $\bar\psi$ refers to the streamfunction
evaluated using the previous guess $\bxx$, i.e.\ using $\bxx$ in
place of $\xx$ in (\ref{psi2}).  This is a linear integral equation for 
$\hat\eta$ and possibly $\hat\Omega$, but its solution requires
additional constraints.  First of all, we require that the vortex
patch area $A$ remains constant, and without loss of generality we
may take $A=\pi$.  This constraint gives rise to an additional equation
after linearising the expression for area
\begin{equation}\label{area}
A=\frac12\oint_{\CC}x\der{y}-y\der{x}
\end{equation}
leading to
\begin{equation}\label{areapert}
\int_0^{2\pi}\hat\eta(\vt)\der\vt=A-\bar{A}
\end{equation}
where $\bar{A}$ is the area of the previous guess found by using $\bxx$
in place of $\xx$ in (\ref{area}).  If $\Omega$ is held fixed during the
iteration ($\hat\Omega=0$), no further constraints are necessary.  However,
holding $\Omega$ fixed does not allow one to negotiate turning points in
the equilibrium solution branches \cite{Luzjcp}.  If we let $\Omega$ vary
(and thus be determined as part of the solution), we need to impose a
further constraint.  The most natural is angular impulse:
\begin{equation}\label{angi}
J=\frac14\oint_{\CC}|\xx|^2(x\der{y}-y\der{x})
\end{equation}
whose linearisation leads to the additional equation
\begin{equation}\label{angipert}
\int_0^{2\pi}|\bxx(\vt)|^2\hat\eta(\vt)\der\vt=J-\bar{J}
\end{equation}
where $\bar{J}$ is the angular impulse of the previous guess found by
using $\bxx$ in place of $\xx$ in (\ref{angi}).\\

Numerically, the vortex patch boundary for an $m$-fold equilibrium
is represented by $n=400m$ boundary
nodes, approximately equally spaced in $\vt$.  Upon each iteration, the
travel time coordinate is recomputed and the nodes are redistributed
to be equally-spaced in $\vt$, to within numerical discretisation
error.  The algorithm for node redistribution is otherwise the same as
that introduced in \cite{D88} and uses local cubic splines for high
accuracy.  Also, the calculation of the streamfunction $\psi$ and
velocity $\uu=\gperp\psi$ by contour integration is described in
\cite{D88}, and further in \cite{D89} for the QGSW equations (as used
by \cite{pd12} to determine the branch of 2-fold QGSW equilibria
bifurcating from the Rankine vortex).\\

The perturbation function $\hat\eta(\vt)$ is represented as the truncated
Fourier series
\begin{equation}\label{pertseries}
\hat\eta(\vt)=\sum_{j=0}^N a_j\cos(jm\vt)
\end{equation}
which imposes even symmetry.  The same symmetry is imposed for the
node redistribution, so only $201$ boundary nodes are unique.  Not all
vortex patch equilibria have such symmetry \cite{Luzjcp}, but we
restrict attention to symmetric equilibria in this study.  A
truncation of $N=32$ was found sufficient to produce results accurate
to within the plotted line widths below.  Accuracy is not
significantly improved when using larger $N$ because ultimately the
highest modes $\cos(jm\vt)$ for $j$ near $N$ are poorly represented by
the boundary nodes.  In general, we find $N \sim 0.08n/m$ ensures the
highest modes are adequately resolved, as judged by the decay of the
Fourier coefficients $a_j$ for $j$ large.\\

Note the coefficient $a_0$ is directly determined by area
conservation.  From (\ref{areapert}), we find
\begin{equation}\label{a0area}
a_0=\frac{A-\bar{A}}{2\pi}.
\end{equation}
This means that the streamfunction constant $C$ is determined by
averaging (\ref{newguessbis}) over $\vt$, and thus $C$ is generally
determined from all of the $a_n$ and $\hat\Omega$.  However, $C$ is
not needed to find an equilibrium, so this calculation need not be
done.\\

We solve (\ref{newguessbis}) for $a_n$ ($n>0$) --- together with
(\ref{angipert}) for $\hat\Omega$ when $\Omega$ is allowed to vary ---
by substituting (\ref{pertseries}) into (\ref{newguessbis}), then
multiplying both sides by $\pi^{-1}\cos(im\vt)$, and finally integrating
over $\vt$.  This results in the linear system
\begin{equation}\label{linsys}
\sum_{j=1}^N A_{ij}a_j-B_i\hat\Omega=C_i, \qquad i=1,\,2,\,...,\,N
\end{equation}
where the matrix elements $A_{ij}$ and vector components $B_i$ and $C_i$
are given by
\begin{align}\label{linsysbis}
  A_{ij} &= \Omega_p\delta_{ij}
  -\frac{1}{\pi}\int_0^{2\pi}\int_0^{2\pi}\cos(im\vt)\cos(jm\vt')
  K_0(\EE|\bxx(\vt')-\bxx(\vt)|)\der\vt'\der\vt
  \nonumber \\
  B_i &= \frac{1}{2\pi}\int_0^{2\pi}|\bxx(\vt)|^2\cos(im\vt)\der\vt  \\
  C_i &= \frac{1}{\pi}\int_0^{2\pi}R(\vt)\cos(im\vt)\der\vt  \nonumber
\end{align}
where $\delta_{ij}$ is the Kronecker delta.  Notably, $A_{ij}=A_{ji}$.
Care is taken to avoid the logarithmic singularity in $K_0(\EE r)$ by
separating this function into a singular part
$S=-\ln(1-\cos(\vt'-\vt))$, which can be integrated analytically (and
contributes $-\pi\delta_{ij}/(im)$ to $A_{ij}$), and a non-singular
remainder $K_0-S$ which is integrated by two-point Gaussian
quadrature.  The same numerical quadrature is used to compute $B_i$
(if $\hat\Omega\neq 0$) and $C_i$.  The area constant $a_0$ from
(\ref{a0area}) is ignored at this stage and is added after the linear
system (\ref{linsys}) is solved.\\

When $\Omega$ is held fixed, we have $\hat\Omega=0$ in (\ref{linsys})
above, and then (\ref{linsys}) may be directly solved for the coefficients
$a_j$, $j=1,\,2,\,...,\,N$.  Otherwise, we need to add another equation
in order to also obtain $\hat\Omega$.  We do this by fixing the angular
impulse and thus use (\ref{angipert}).  Inserting (\ref{pertseries}) into
(\ref{angipert}) and dividing by $2\pi$ results in the further linear
equation
\begin{equation}\label{linsysangi}
\sum_{j=1}^N B_j a_j=\frac{J-\bar{J}}{2\pi}
\end{equation}
(again ignoring the area constant $a_0$).  If we tag $-\hat\Omega$ to
the end of the vector of coefficients $(a_1,a_2,...,a_N)$, we obtain a
symmetric linear system for this vector, which is easily solved by
standard numerical linear algebra packages.  Only at this stage do we
add $a_0$ from (\ref{a0area}) and fully determine $\hat\eta(\vt)$ from
the sum in (\ref{pertseries}).  We then obtain a new guess for the
vortex boundary shape from (\ref{newguess}), and accept this as the
converged solution if the maximum value in $|\xx-\bxx| < 10^{-7}$.
Otherwise, we use $\xx$ as the next guess $\bxx$ and repeat the
above procedure.\\

The above explains how we obtain a single equilibrium state for either
fixed rotation rate $\Omega$ or fixed angular impulse $J$.  To obtain a
whole family of states or solutions, after convergence to one state, we
slightly change either $\Omega$ or $J$ and search for the next state
using the same iterative procedure described above.\\

This approach fails when we reach a turning point in either $\Omega$ or
$J$.  When this happens, we switch the control parameter
(e.g.\ instead of changing $\Omega$ we change $J$) and continue to the
next turning point.  Fortunately, the turning points for $\Omega$ and
$J$ are almost never coincident, so this is an effective strategy.  A
more elaborate strategy was followed in \cite{Luzjcp}, but the
proposed strategy has been found to be highly effective.  We always
reach limiting states containing near corners on their boundaries,
beyond which there are no other states with the same topology.\\

We note for completeness that the above procedure is readily
generalized to study multiply-connected vortex patch equilibria.  The
primary change is that the constant $C$ and all functions of $\vt$ in
(\ref{newguessbis}) acquire a $k$ subscript (denoting the $k$th patch
boundary or contour), while all functions of $\vt'$ acquire an $\ell$
subscript.  Furthermore, the integral is now summed over all contours
$\ell$ and multiplied by the uniform potential vorticity $q_\ell$.
Each contour must also satisfy an area constraint like
(\ref{areapert}), with $\eta$ and $A$ supplemented by $k$ subscripts.
If angular impulse is fixed, additionally (\ref{angipert}) must be
satisfied.  Here again all functions of $\vt$ acquire a $k$ subscript,
and the integral must be summed over $k$ and multiplied by the uniform
potential vorticity $q_k$.  The impulse is an invariant of the entire
vortex system.\\

The procedure also generalizes to cases in which the vortex patches
steadily translate rather than rotate.  In this case, without loss of
generality, we may suppose that the patches translate in the $x$
direction at speed $U$.  The only changes then required in the
procedure described above is to replace $\tfrac12\Omega|\bxx|^2$
by $-U\bar{y}$ and to impose linear impulse conservation,
\begin{equation}\label{limp}
I=\frac13\sum_k q_k\oint_{{\CC}_k}y_k(x_k\der{y}_k-y_k\der{x}_k).
\end{equation}
When searching for equilibria of a fixed linear impulse $I$, an
additional equation is required to determine the perturbation $\hat{U}$
to the speed $U$.  This is found by linearising (\ref{limp}) after
substituting (\ref{newguess}), supplemented by $k$ subscripts,
yielding
\begin{equation}\label{limppert}
\sum_k q_k\int_0^{2\pi}\bar{y}_k(\vt)\hat\eta_k(\vt)\der\vt=I-\bar{I}
\end{equation}
where $\bar{I}$ is the linear impulse of the previous guess found by
using $\bxx_k$ in place of $\xx_k$ in (\ref{limp}).\\

Various diagnostics may be used to characterize the equilibrium
vortex patch solutions.  For the symmetric $m$-fold solutions
studied here, the only other non-zero invariant besides angular
impulse is the `excess' energy $E$, as defined in \cite{pd12}.
For a single patch of area $A=\pi$, this is determined from
\begin{equation}\label{exene}
  E=\frac{\pi}{4}(\ln(\EE/2)+\gamma)-\frac{1}{4\pi}
  \oint_{\CC}\oint_{\CC}H(\EE r)
  [(\xx'-\xx)\bcdot\der\xx'][(\xx'-\xx)\bcdot\der\xx]
\end{equation}
where $\gamma=0.5772...$ is Euler's constant, $r=|\xx-\xx'|$, and
$H(z)$ is defined in (\ref{psi2bis}).  This expression for $E$ has a
finite limit as $\EE\to 0$, and equals the excess energy for the Euler
equations (see discussion in appendix B of \cite{pd12}).  For a circular
patch, $E=\pi/16$ when $\EE=0$.\\

The energy and angular impulse are important since minima or maxima of
these quantities, as a function of a control parameter like $\Omega$,
generally indicate changes in stability.  In the results below, we
have confirmed this by a direct linear stability analysis outlined in
\cite{D95} and used previously in \cite{pd12}.

\section{Numerical results}
\label{sec:num2}

\subsection{2-fold vortex patch equilibria}
\label{subsec:num2fold}

We begin by discussing 2-fold symmetric vortex patch equilibria, in
particular the structure of the solution branches for small $\EE$.
This structure is illustrated in two ways for $\EE=0.01$ in
figure 1.\\

\begin{figure}
\begin{center}
\begin{tabular}{C{0.475\textwidth} C{0.475\textwidth}} \\
(a) & (b) \\
\includegraphics[height = 0.475\textwidth]{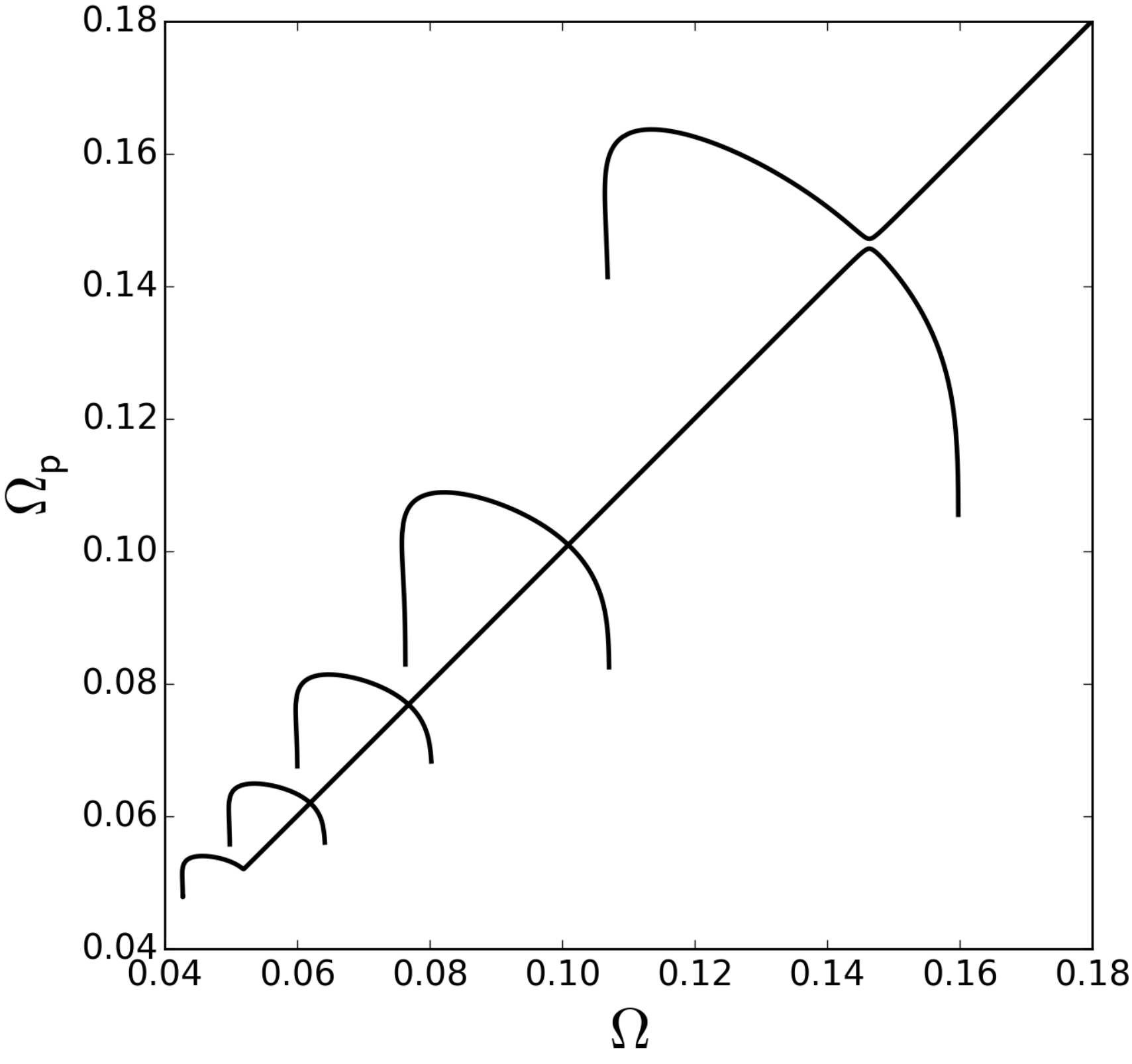}&
\includegraphics[height = 0.475\textwidth]{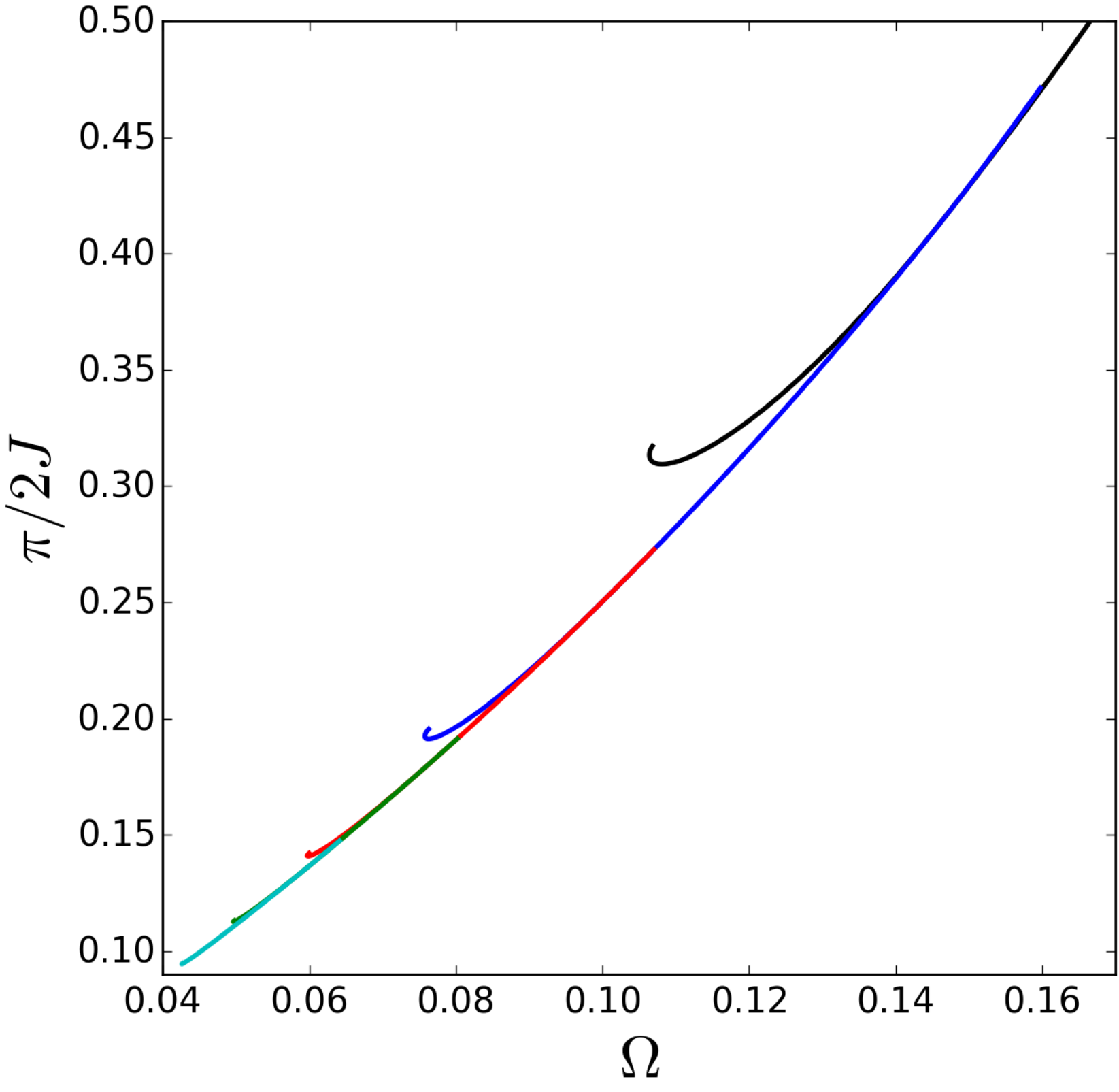} 
\end{tabular}
\label{fig:2struct}
\caption{Solution branch structure for 2-fold vortex patch equilibria
  when $\EE=0.01$.  In (a), we shown the particle frequency $\Omega_p$
  versus rotation rate $\Omega$, while in (b) we show $\pi/2J$ versus
  $\Omega$ and render separate branches in different colours.  Note,
  for a circular vortex patch, $J=\pi/2$.  The upper right branch is
  not shown in its entirety; it begins at the circular patch with
  $\Omega\approx\tfrac14$ and $\Omega_p\approx\tfrac14$.  Also, only
  the first five branches of an infinite set of them converging on
  $\Omega=\Omega_p=0$ are shown.}
\end{center}
\end{figure}


This figure shows the difficulty in distinguishing solution branches
when a conserved quantity like $J$ or $E$ is plotted versus the
control parameter $\Omega$.  On the other hand, $\Omega_p$ versus
$\Omega$ fully opens the branching structure, enabling one to see the
separation in the branch stemming from the circular vortex (upper
right) from the next branch.  In panel (b), the separate branches are
coloured, with the primary one being black, the second one blue
(partly overlaid by the third one in red), etc.  The blue, red, green
and cyan branches all have the same general form.  They rise at larger
$\Omega$ from small $\Omega_p$, reach a maximum in $\Omega_p$, then fall at smaller $\Omega$.  If we could reach the limiting states having one or
more stagnation points on the vortex boundary (where the boundary
exhibits a corner), we would find $\Omega_p=0$ since it would take an
infinite time for a particle to circulate around the boundary.\\

\begin{figure}
\begin{center}
\includegraphics[height = 0.475\textwidth]{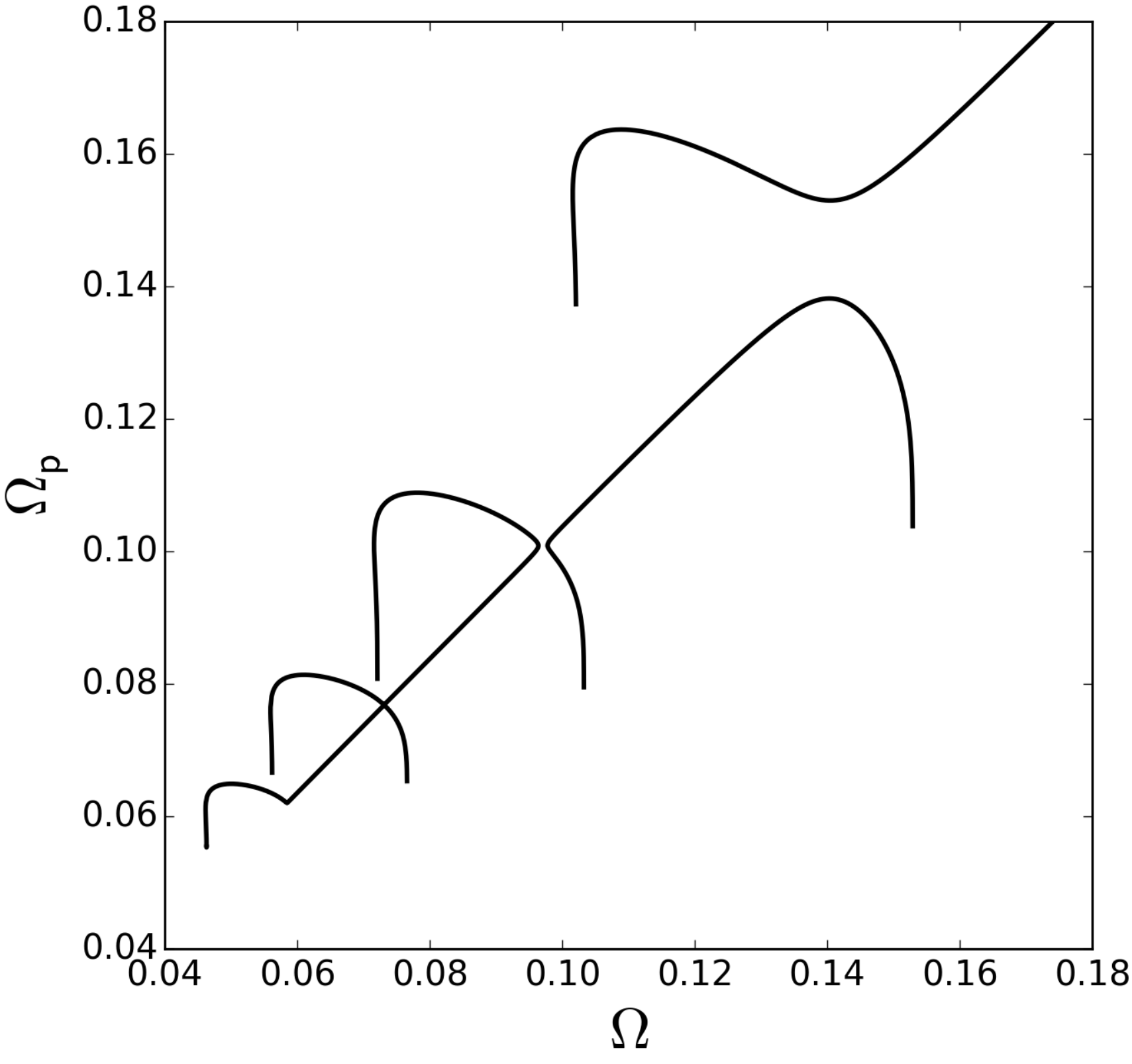}
\label{fig:2structbis}
\caption{Solution branch structure for 2-fold vortex patch equilibria
  when $\EE=0.1$.  Here, only $\Omega_p$ versus $\Omega$ is shown.}
\end{center}
\end{figure}

The separation of these branches becomes increasingly difficult to see
as $\Omega$ decreases; however, at the larger value of $\EE=0.1$
(shown in
figure 2),
we can clearly see the second
branch separating from the third.  We believe this is a generic
feature for all $\EE>0$: all branches separate.\\

The uppermost branch of solutions stemming from the circular vortex
was computed previously by \cite{pd12}, and at that time was thought
to be the only branch of 2-fold solutions.  The limiting state is a
dumbbell shaped vortex touching at a single point at the origin.
On the second branch, there are two limiting states.  The one having
the largest $\Omega$ has a rugby-ball shape with right-angled corners
at the outermost tips.  The other limiting state has the form of an
array of three vortices connected at two stagnation points.  This
pattern continues for the other branches, with the vortex become
increasingly elongated.  The (near) limiting states are illustrated in 
figures 3 and 4
together with the
streamfunction in the co-rotating frame of reference.\\

\begin{figure}
\begin{center}
\begin{tabular}{C{0.475\textwidth} C{0.475\textwidth}} \\
(a) & (b) \\
\includegraphics[width = 0.475\textwidth]{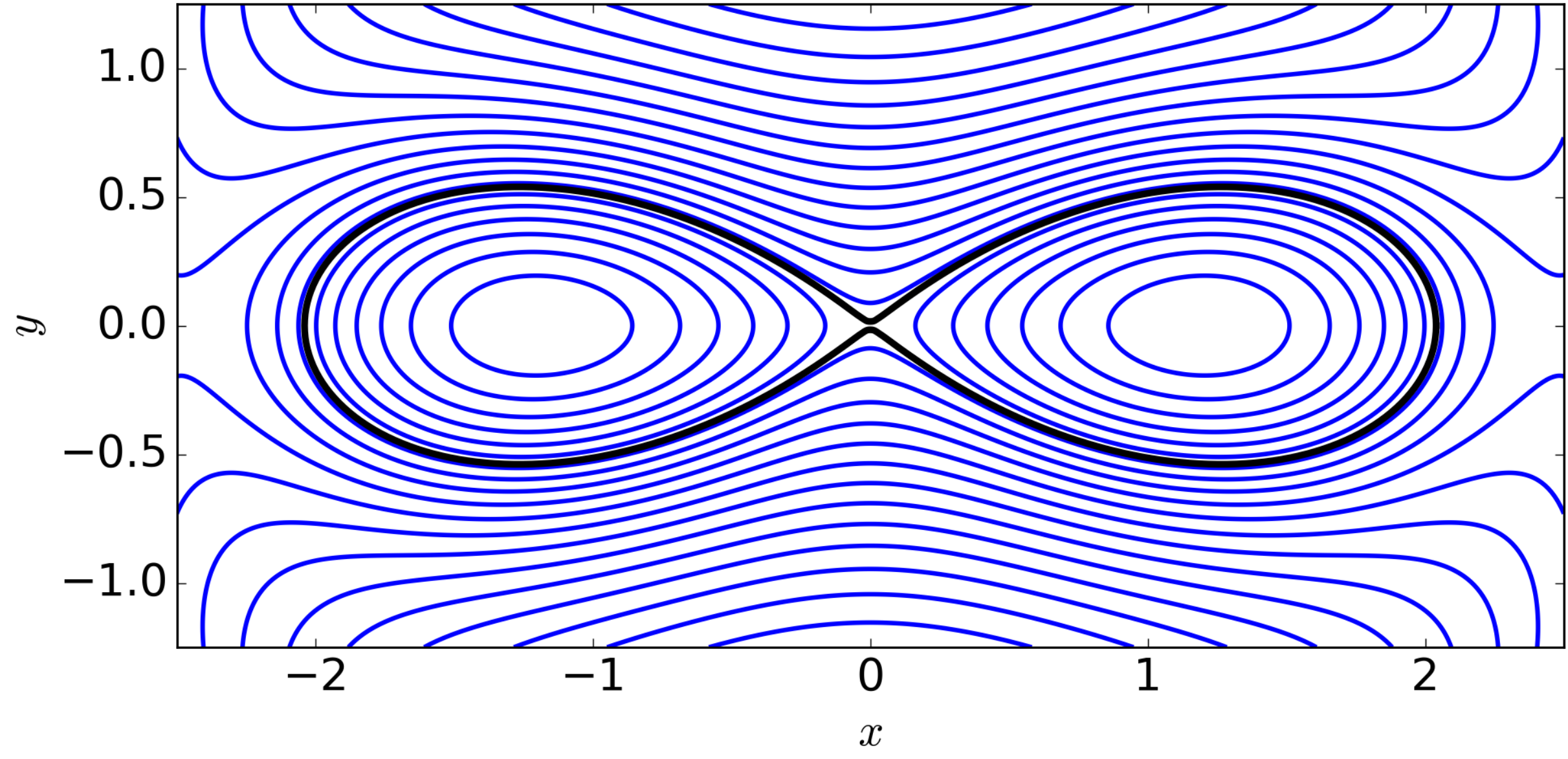} &
\includegraphics[width = 0.475\textwidth]{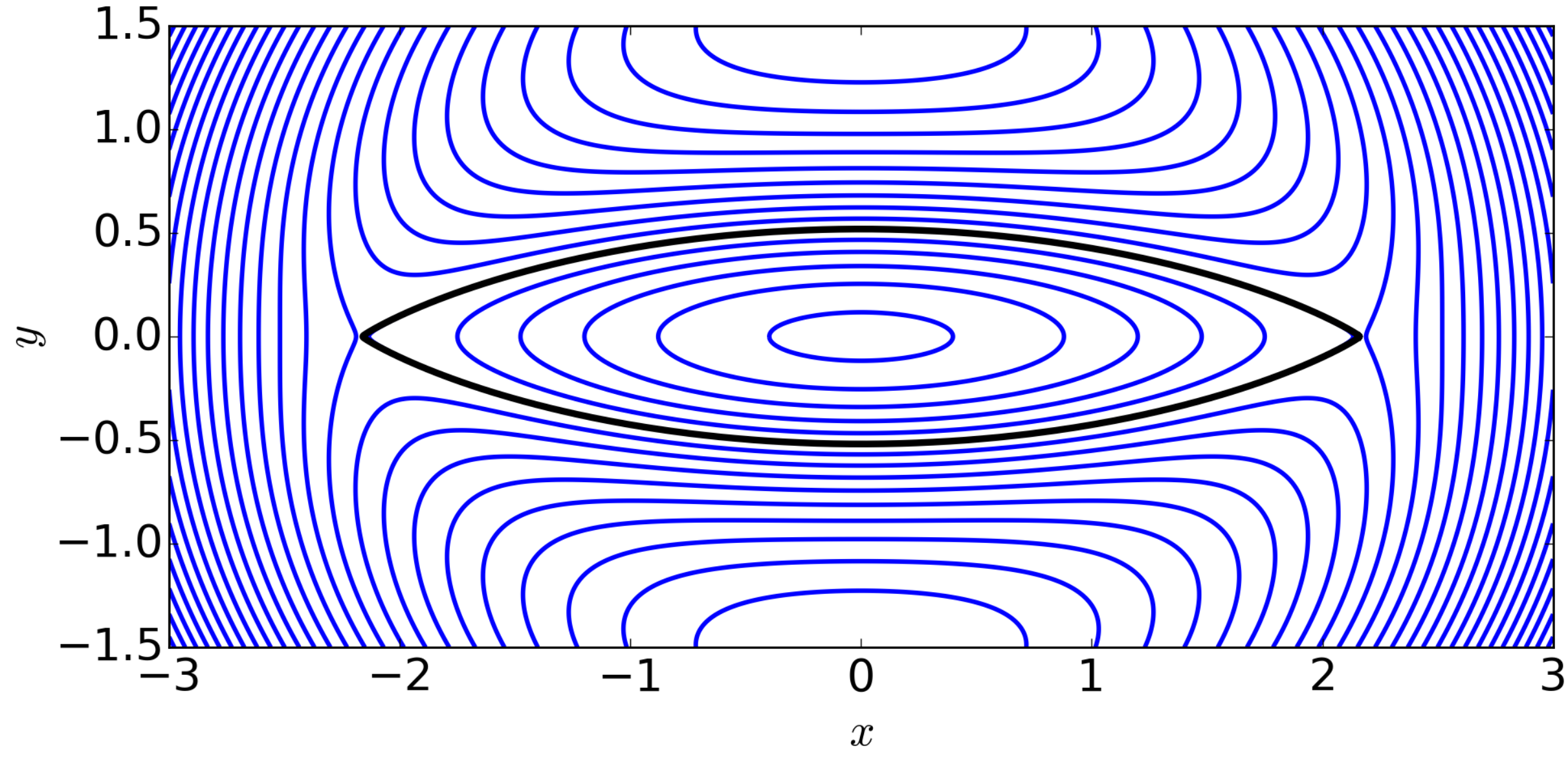} \\
(c) & (d) \\
\includegraphics[width = 0.475\textwidth]{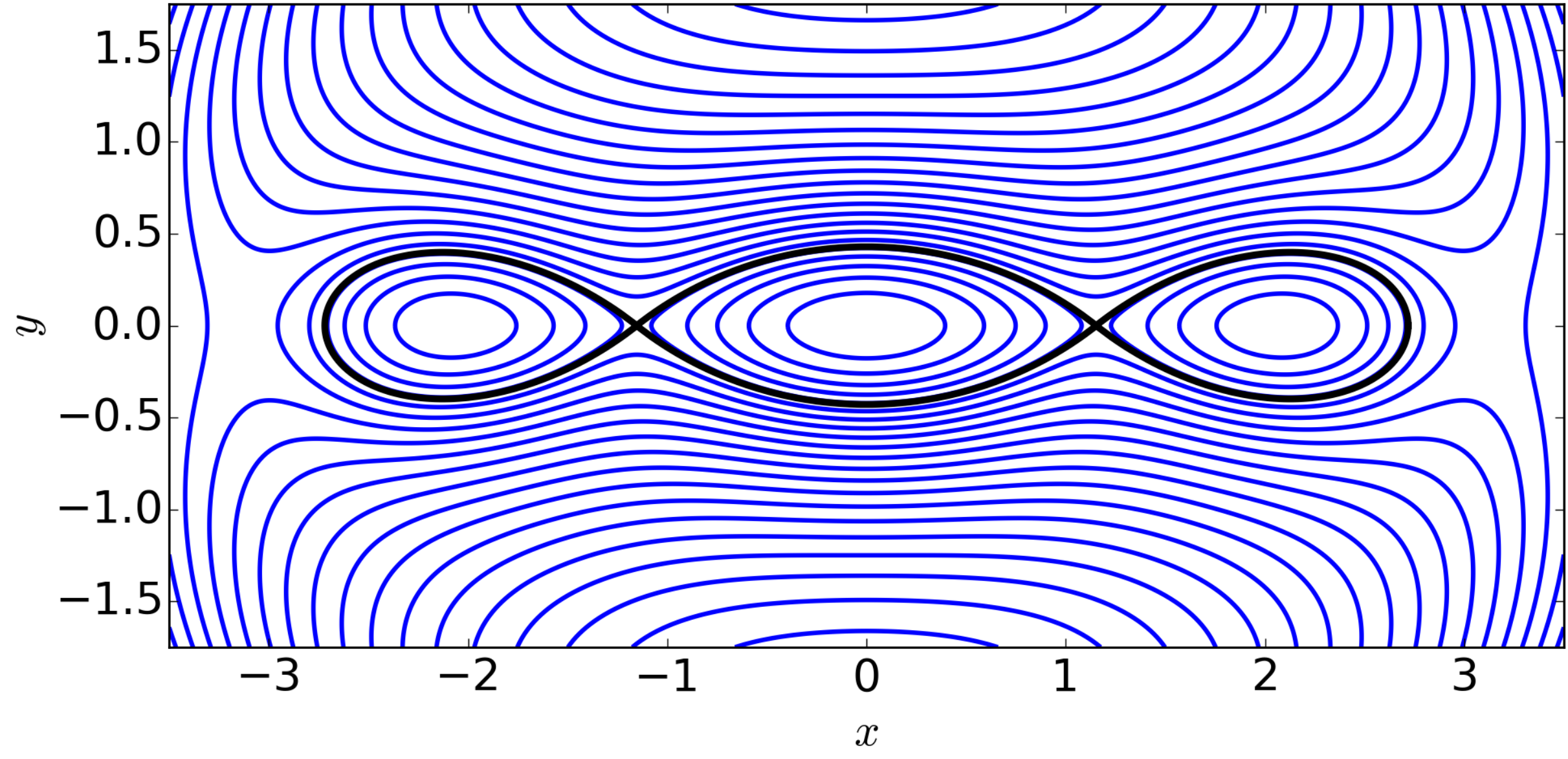} &
\includegraphics[width = 0.475\textwidth]{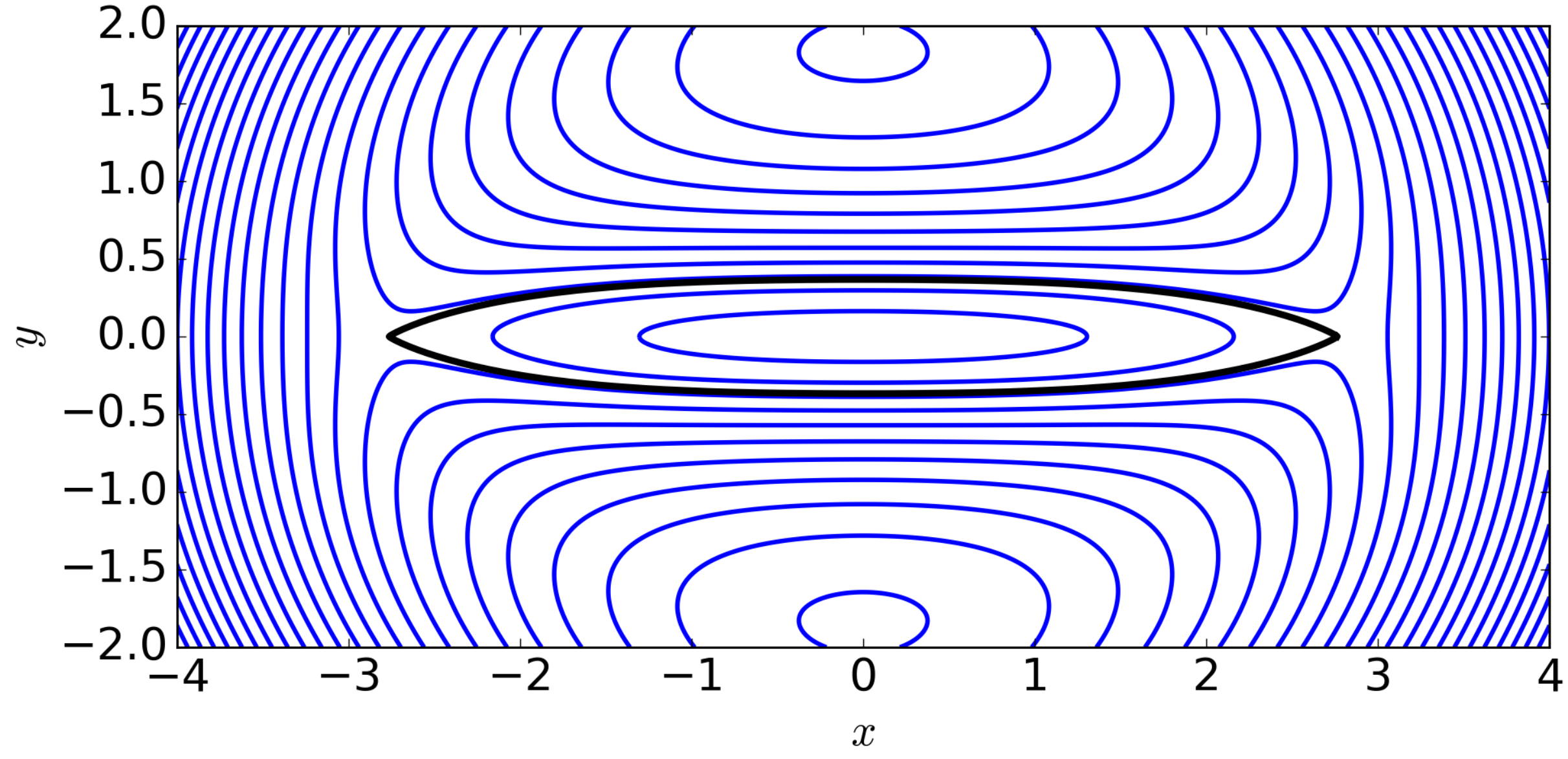}
\end{tabular}
\label{fig:2limitp1}
\caption{Form of the (near) limiting solutions (black contours) and
  the co-rotating streamfunction (blue) for (a) the end of the first
  branch, (b) the start of the second branch, (c) the end of the second
  branch, and (d) the start of the third branch.}
\end{center}
\end{figure}

\begin{figure}
\begin{center}
\begin{tabular}{C{0.475\textwidth} C{0.475\textwidth}} \\
(a) & (b) \\
\includegraphics[width = 0.475\textwidth]{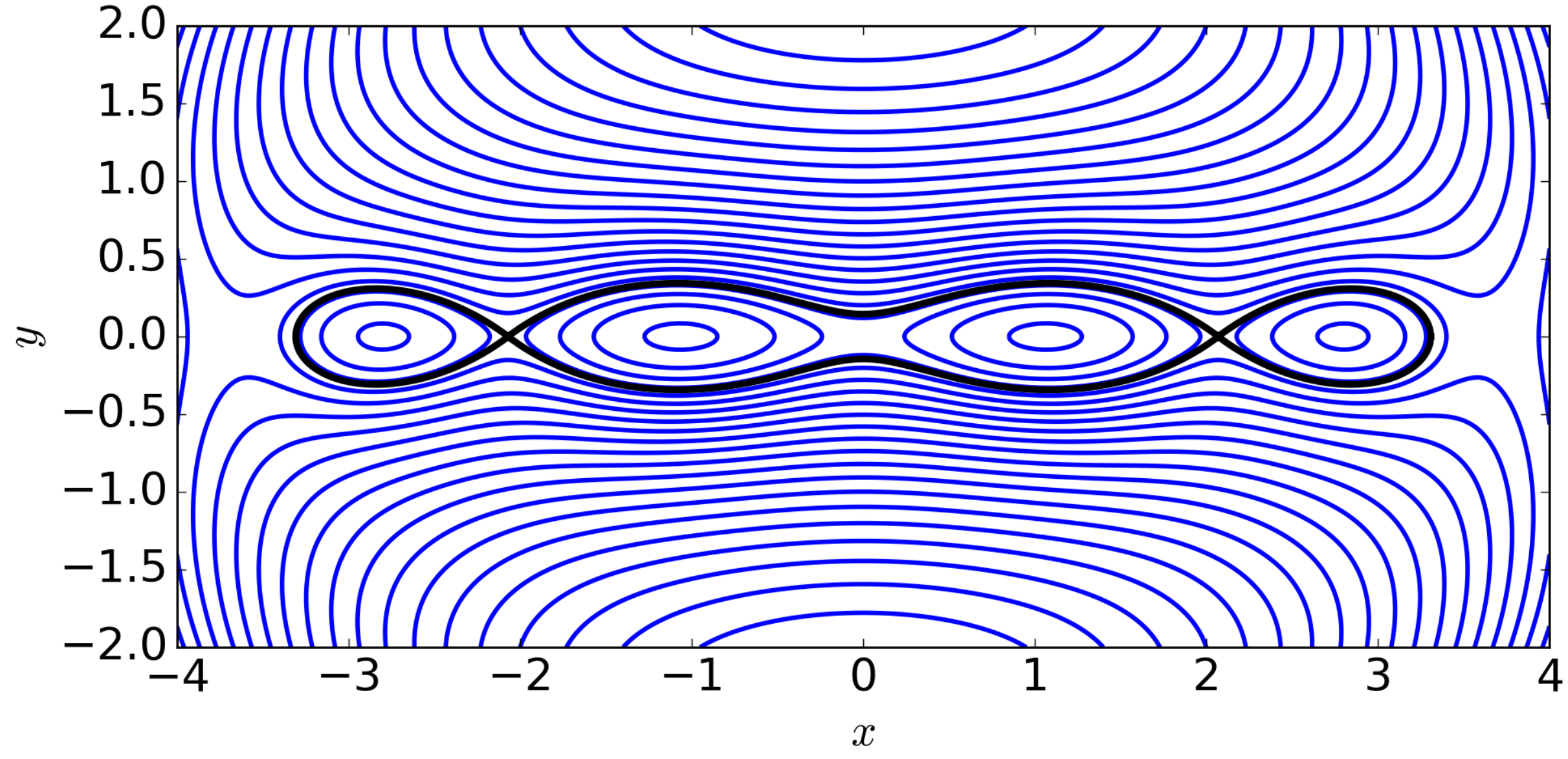} &
\includegraphics[width = 0.475\textwidth]{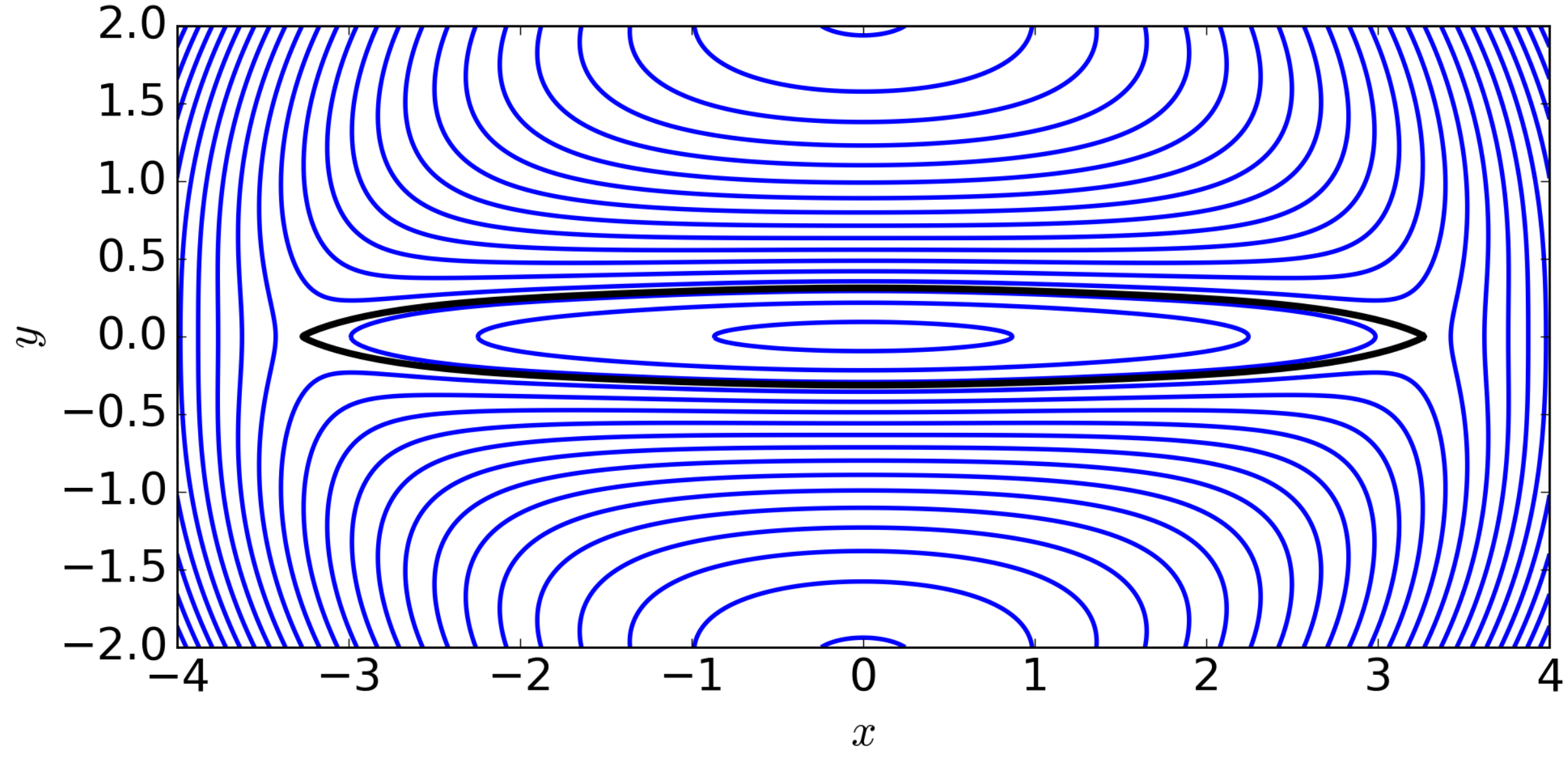} \\
(c) & (d) \\
\includegraphics[width = 0.475\textwidth]{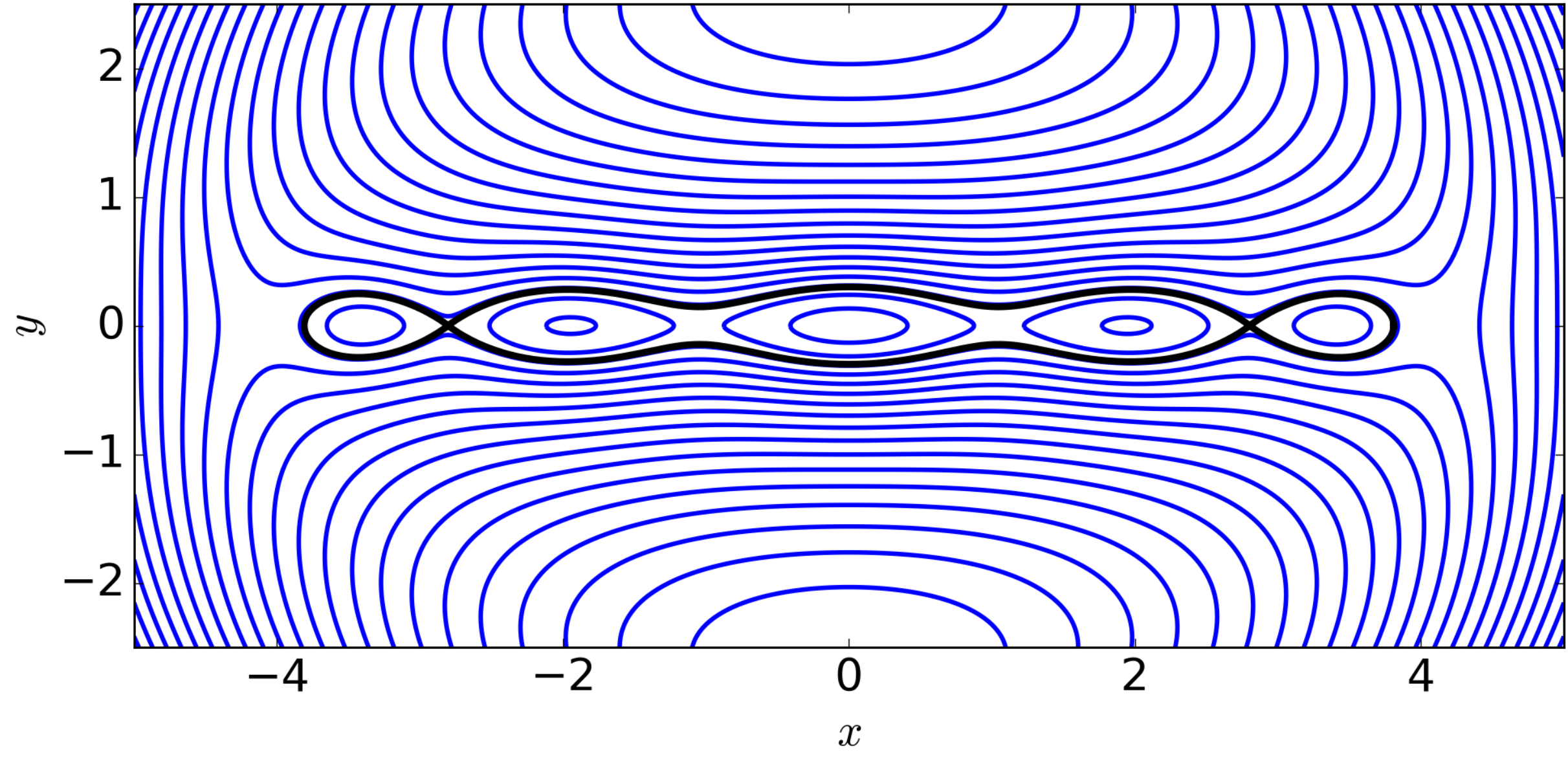} &
\includegraphics[width = 0.475\textwidth]{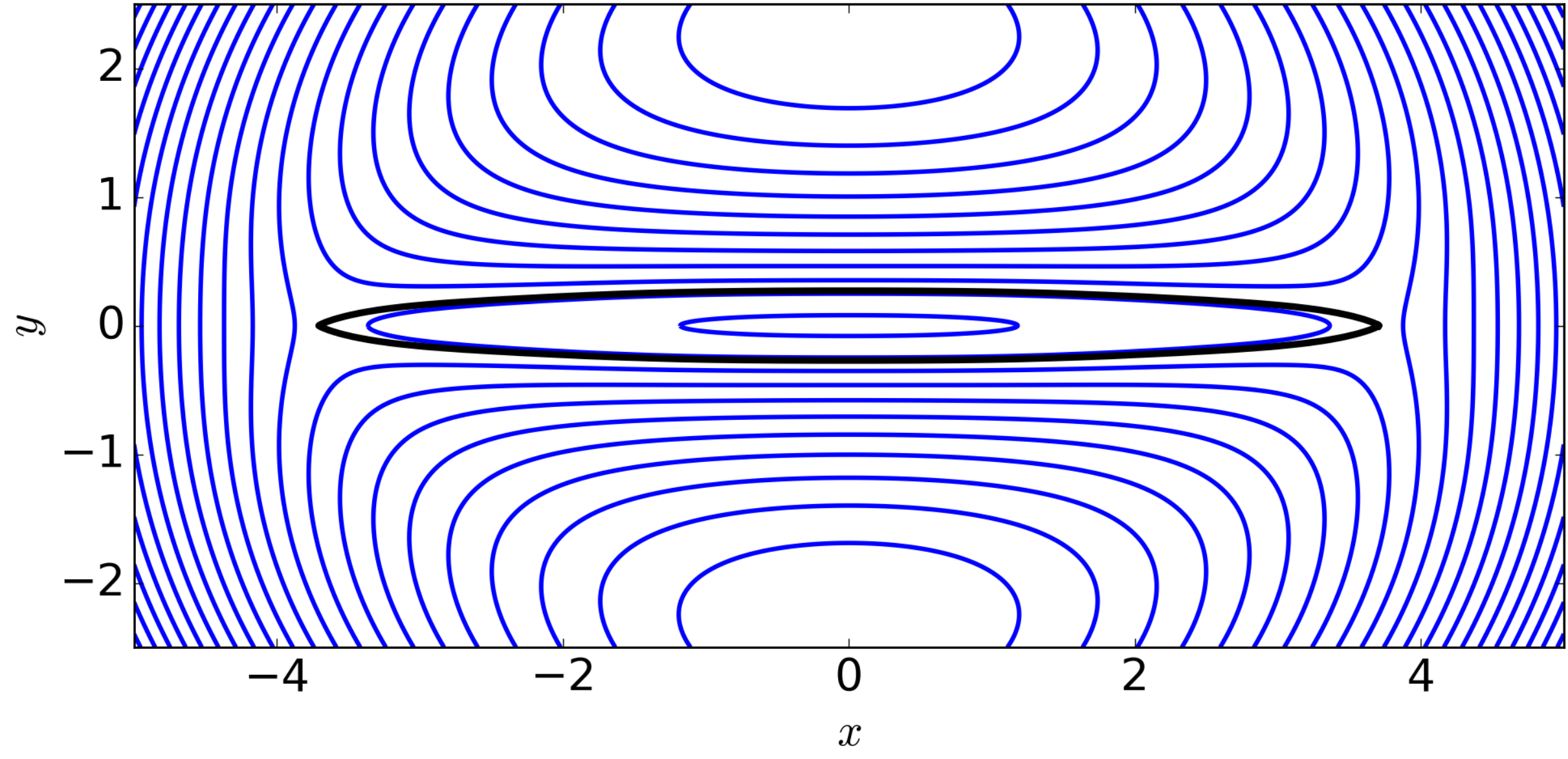}
\end{tabular}
\label{fig:2limitp2}
\caption{Form of the (near) limiting solutions (black contours) and
  the co-rotating streamfunction (blue) for (a) the end of the third
  branch, (b) the start of the fourth branch, (c) the end of the fourth
  branch, and (d) the start of the fifth branch.}
\end{center}
\end{figure}

Key properties of the two-fold limiting states for $\EE=0.01$ are
provided in Table 1.

\begin{table}
\begin{center}
\begin{tabular}{| c || r | r | r |}
  \toprule
  Branch & $\Omega$\hspace{0.6cm} & $\pi/2J$\hspace{0.3cm} & $16E/\pi$\hspace{0.3cm} \\
  \midrule
  1a & $0.250000$ & $1.000000$ & $ 1.000000$ \\
  1b & $0.106827$ & $0.317246$ & $-0.456852$ \\
  2a & $0.159754$ & $0.471293$ & $ 0.110399$ \\
  2b & $0.076297$ & $0.195486$ & $-1.231673$ \\
  3a & $0.107035$ & $0.272772$ & $-0.692599$ \\
  3b & $0.059980$ & $0.142097$ & $-1.777402$ \\
  4a & $0.080209$ & $0.191364$ & $-1.268668$ \\
  4b & $0.049782$ & $0.113129$ & $-2.181805$ \\
  5a & $0.064135$ & $0.147404$ & $-1.714846$ \\
  5b & $0.042700$ & $0.094682$ & $-2.504112$ \\
  \bottomrule
\end{tabular}
\caption{Key properties of the (near) limiting states for $\EE=0.01$ for
  the first 5 branches shown in figure 1.  In the first column, `a'
  denotes the start of a branch while `b' denotes the end of it.}
\label{table1}
\end{center}
\end{table}

\subsection{3-fold vortex patch equilibria}
\label{subsec:num3fold}

We next turn to 3-fold symmetric vortex patch equilibria, first
studied by Deem and Zabusky in 1978 \cite{11} for the Euler equations
($\EE=0$).  Here we discuss the structure of the solution
branches for a wide range of $\EE$.\\

\begin{figure}
\begin{center}
\includegraphics[height = 0.475\textwidth]{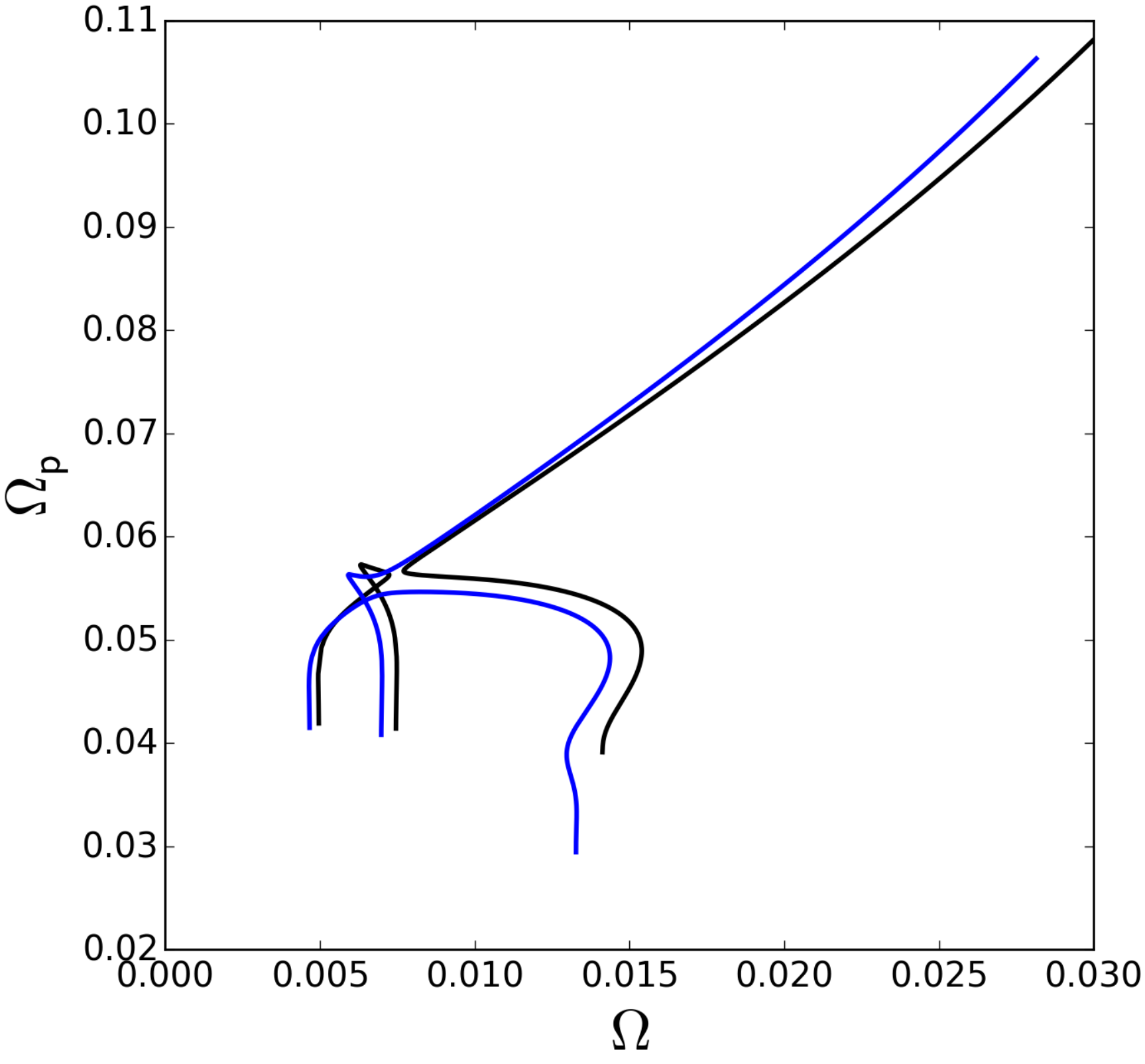}
\label{fig:3structzoom}
\caption{Solution branch structure for 3-fold vortex patch equilibria
  when $\EE=3.5$ (black) and $\EE=3.6$ (blue).  The branches start at
  the circular vortex patch solution in the upper right portion of the
  graph.}
\end{center}
\end{figure}

The most surprising result is that there is a disconnected branch of
solutions, not terminating at either end at the circular Rankine
vortex.  This was discovered by increasing $\EE$ and finding a change
in the topology of the limiting states between $\EE=3.5$ and $3.6$.
This is associated with a bifurcation in the structure of the solution
branches, as shown in 
figure 5.
For $\EE=3.5$ (black curves), the branch starting from the circular
patch in the upper right corner reaches a minimum in $\Omega$ at
$\Omega=0.007704627$, then increases and finally decreases approaching
the limiting state. (In fact $\der\Omega/\der\Omega_p$ likely changes
sign an infinite number of times before reaching the limiting state
at $\Omega_p=0$.)  This limiting state is triangular (albeit with
curved sides), and has the same form as found for the Euler equations
($\EE=0$) in \cite{11} (see below).
For $\EE=3.6$ (blue curves), the branch starting from the circular
patch in the upper right corner also reaches a minimum in $\Omega$,
but then $\Omega$ increases and limits to a significantly smaller
value as $\Omega_p\to 0$.  This limiting state resembles three
petal-like vortices connected at a single point at the origin.
This state is also the limiting state of three identical co-rotating
vortex patches, first studies in \cite{D95}.
Having determined that there is a bifurcation in the solution branch
structure between $\EE=3.5$ and $3.6$, a new branch was discovered by
taking the near limiting solution for $\EE=3.6$ and gradually decreasing
$\EE$ to $\EE=3.5$, holding the angular impulse fixed ($\Omega$ must
be allowed to vary).  In this way, we could find a solution on the
black separated branch next to the blue one (the middle pair of
curves in the lower part of the figure with $\Omega\approx 0.007$).
Having found one solution, we could then continue in both directions
to find the limiting states on this separated branch. (The blue
separated branch can be found similarly by jumping from the black
separated branch at small $\Omega$.)
One of these limiting states is the petal-like 
state just described.  The other, at the smallest $\Omega$, is a new
state consisting of a triangular central vortex attached to three
petals --- a four vortex state.  These limiting solutions are
illustrated for $\EE=3.6$ in
figure 6.\\

\begin{figure}
\begin{center}
\begin{tabular}{C{0.316\textwidth} C{0.316\textwidth} C{0.316\textwidth}} \\
(a) & (b) & (c) \\
\includegraphics[width = 0.316\textwidth]{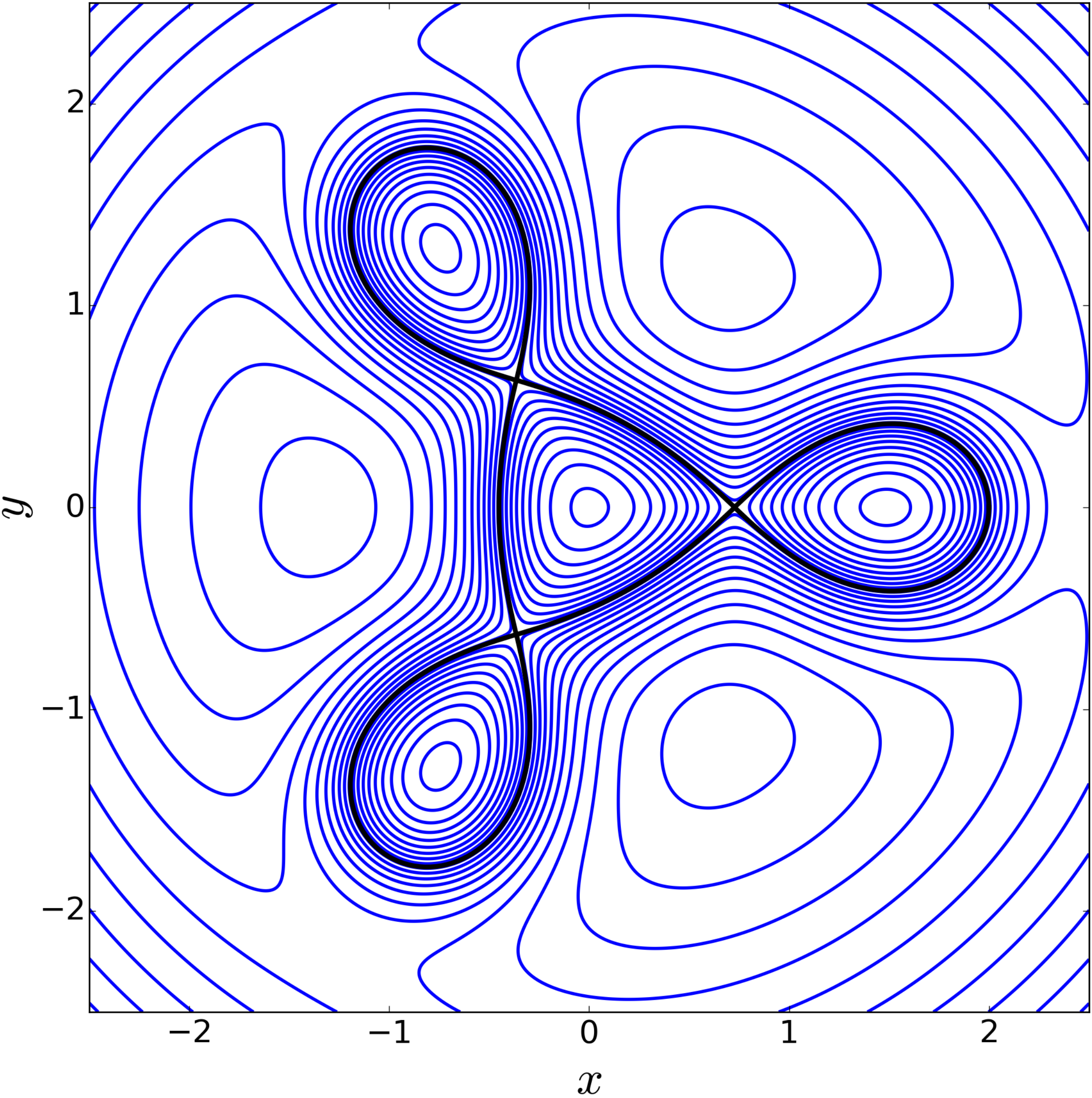} &
\includegraphics[width = 0.316\textwidth]{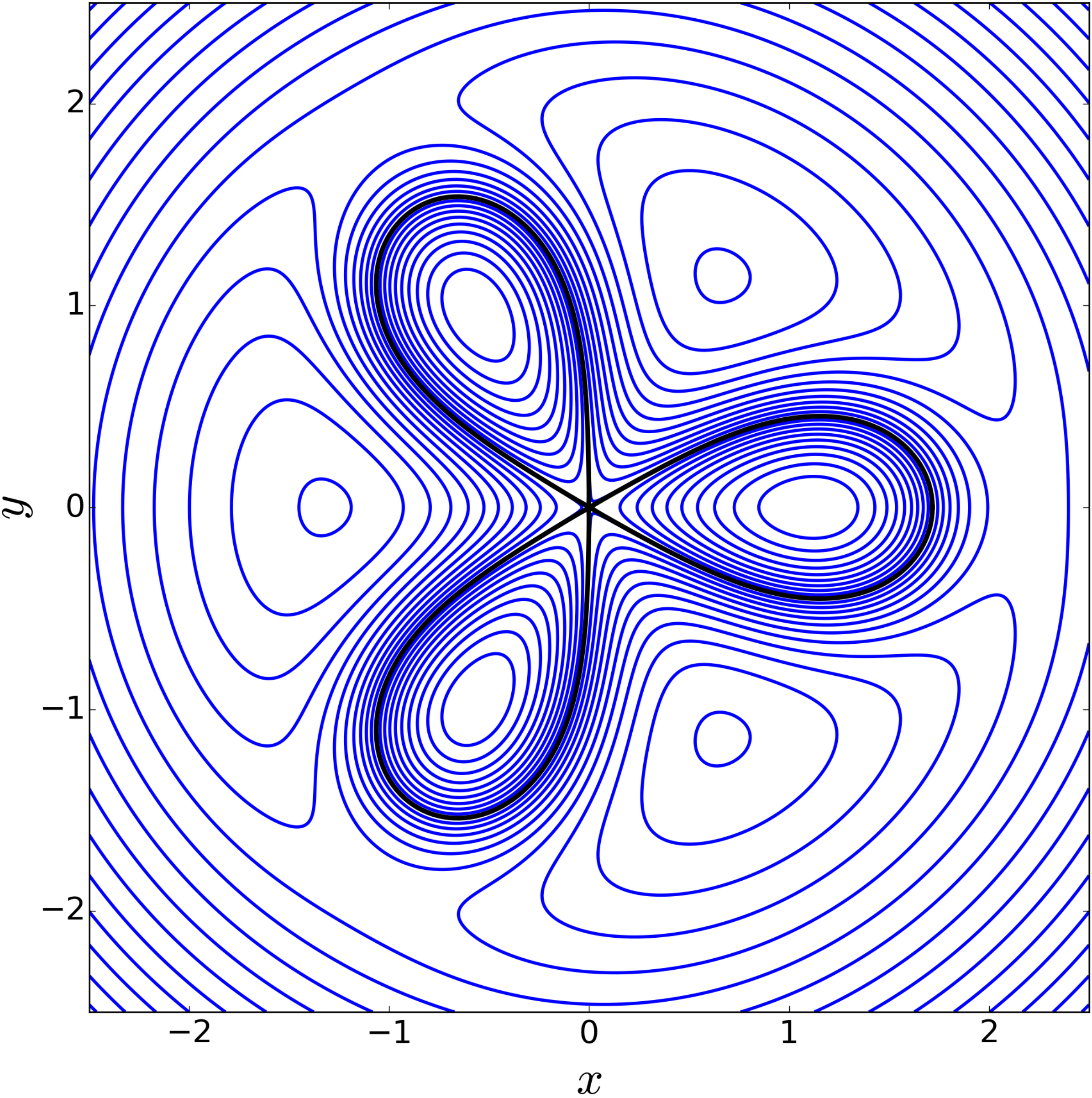} &
\includegraphics[width = 0.316\textwidth]{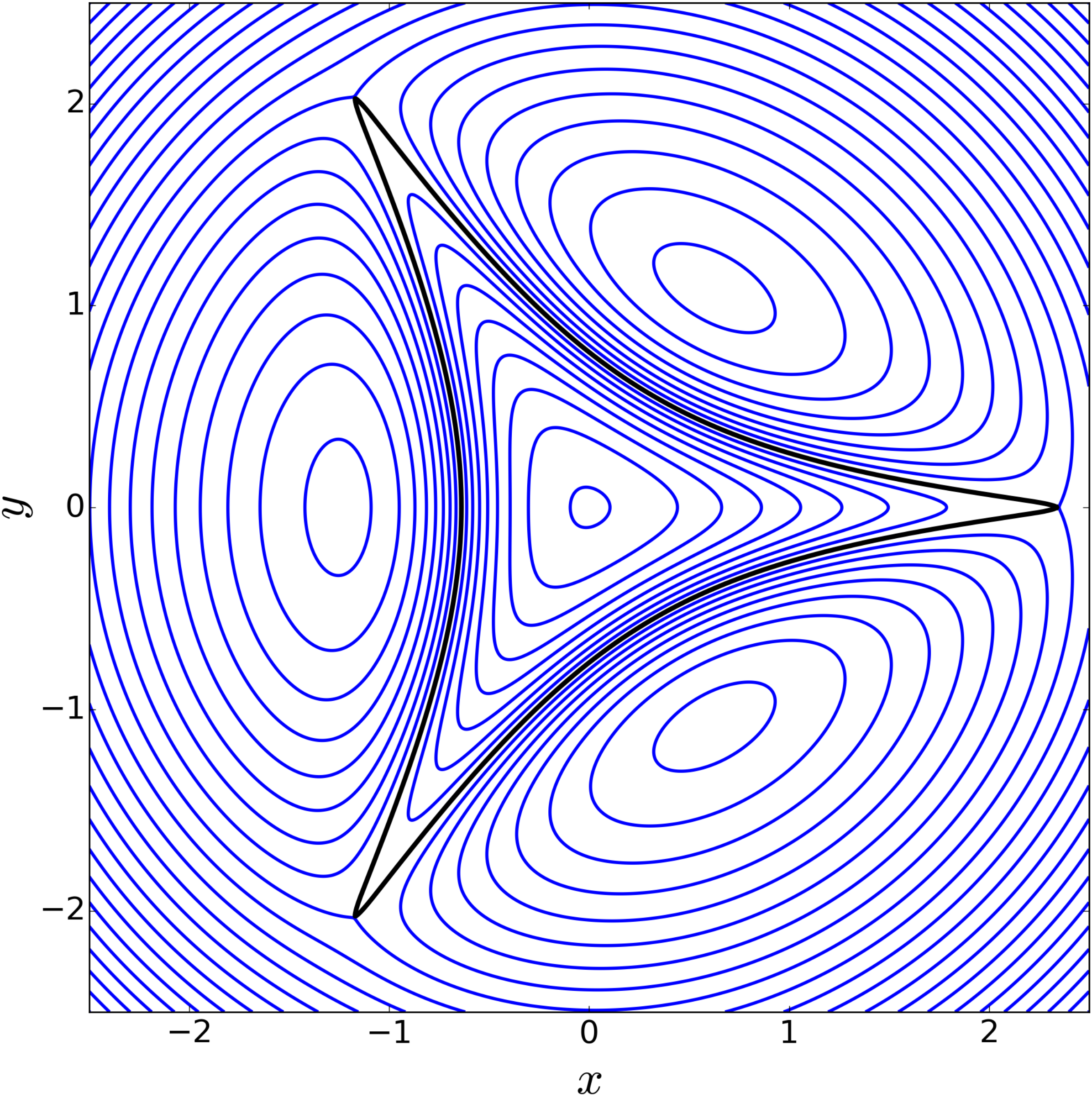}
\end{tabular}
\label{fig:3limit36}
\caption{Form of the (near) limiting solutions (black contours) and
  the co-rotating streamfunction (blue) for the state with (a) the
  smallest $\Omega$, (b) the intermediate value of $\Omega$, and
  (c) the largest $\Omega$.  Here $\EE=3.6$, corresponding to the
  blue curves in
  figure 5.}
\end{center}
\end{figure}

By continuing to jump branches from one value of $\EE$ to another,
we were able to determine that separated branches exist over a
wide range of $\EE$, and likely for all $\EE$.  As $\EE\to 0$, the
separated branch moves far from the primary branch stemming from the
circular patch, as shown in 
figure 7.
The limiting states are qualitatively similar to those for $\EE=3.6$
and are shown in 
figure 8.
Key properties of the three-fold limiting states for $\EE=0$ are
provided in Table 2.\\

\begin{table}
\begin{center}
\begin{tabular}{| r | r | r |}
  \toprule
  $\Omega$\hspace{0.6cm} & $\pi/2J$\hspace{0.3cm} & $16E/\pi$\hspace{0.3cm} \\
  \midrule
  $0.333333$ & $1.000000$ & $ 1.000000$ \\
  $0.301234$ & $0.885055$ & $ 0.835043$ \\
  $0.122420$ & $0.290564$ & $-1.002261$ \\
  $0.113192$ & $0.272230$ & $-1.110795$ \\
  \bottomrule
\end{tabular}
\caption{Key properties of the three-fold (near) limiting states for $\EE=0$.}
\label{table2}
\end{center}
\end{table}

\begin{figure}
\begin{center}
\includegraphics[height = 0.475\textwidth]{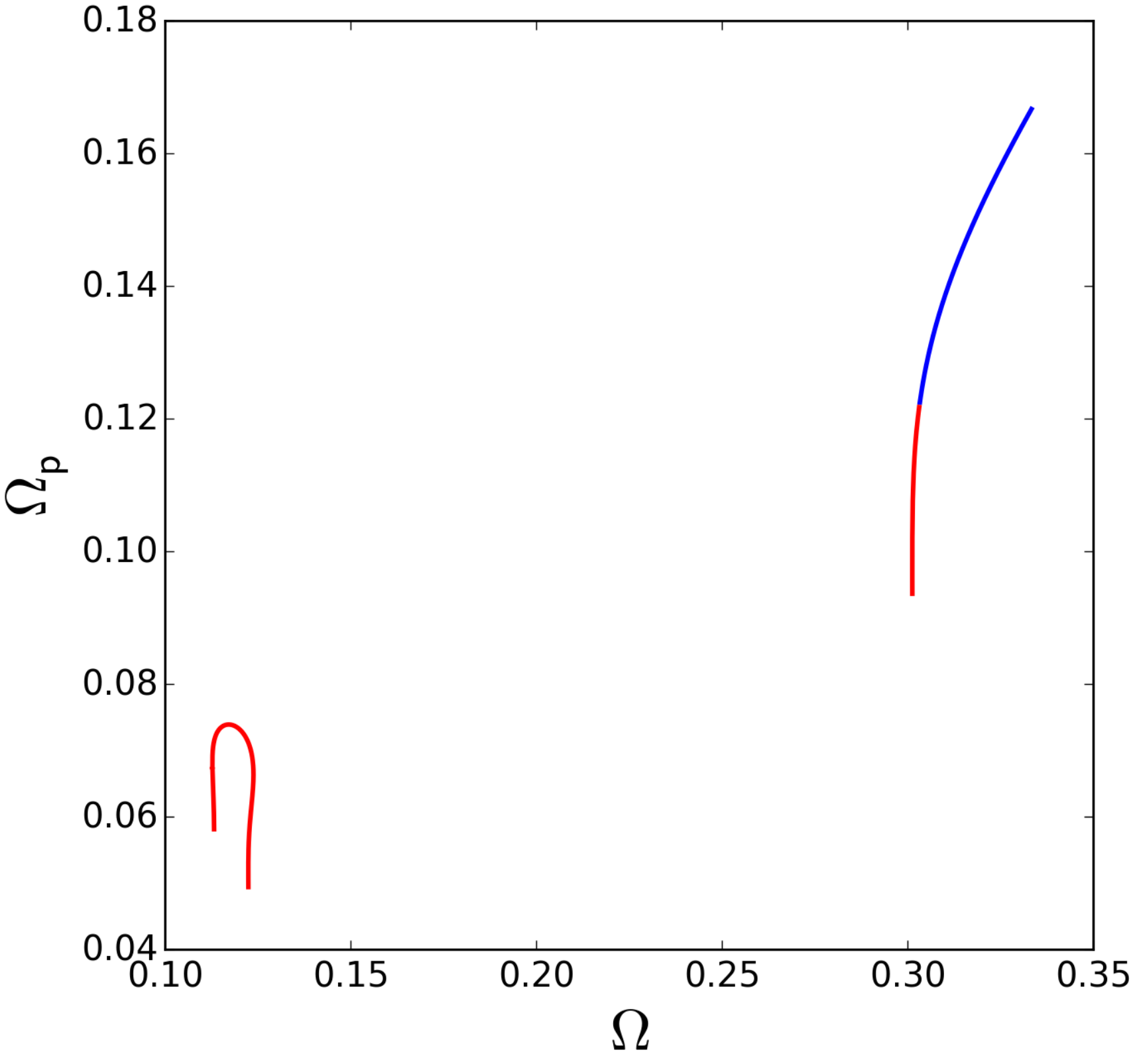}
\label{fig:3struct0}
\caption{Solution branch structure for 3-fold vortex patch equilibria
  when $\EE=0$ (the Euler equations).  The primary branch starts at
  the circular vortex patch solution in the upper right portion of the
  graph.  There, $\Omega=\tfrac13$ and $\Omega_p=\tfrac16$.  The
  separated branch is new.  The blue portion of the primary branch
  is linearly stable while the red portion and the entire separated
  branch is linearly unstable.}
\end{center}
\end{figure}

\begin{figure}
\begin{center}
\begin{tabular}{C{0.316\textwidth} C{0.316\textwidth} C{0.316\textwidth}} \\
(a) & (b) & (c) \\
\includegraphics[width = 0.316\textwidth]{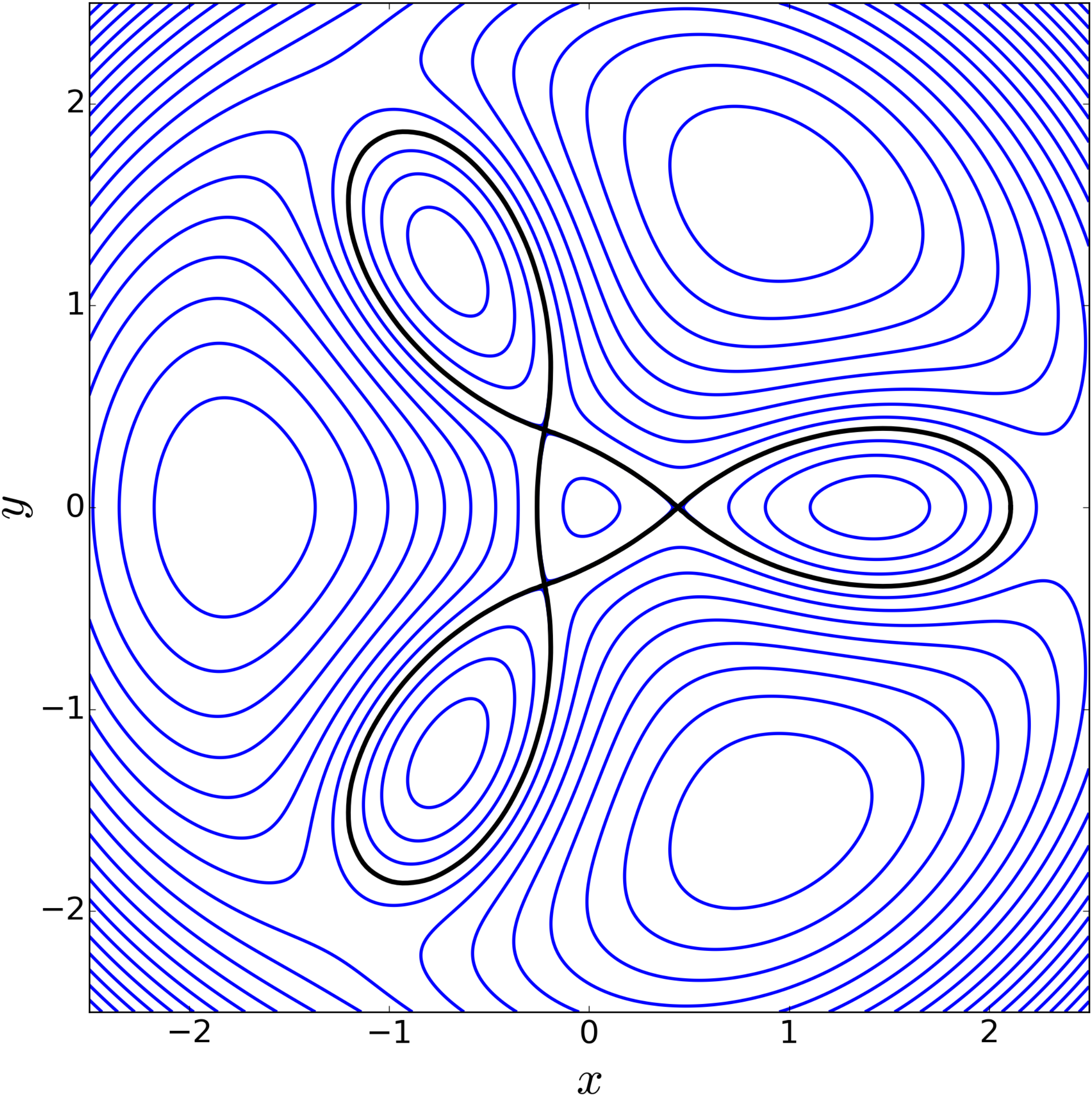} &
\includegraphics[width = 0.316\textwidth]{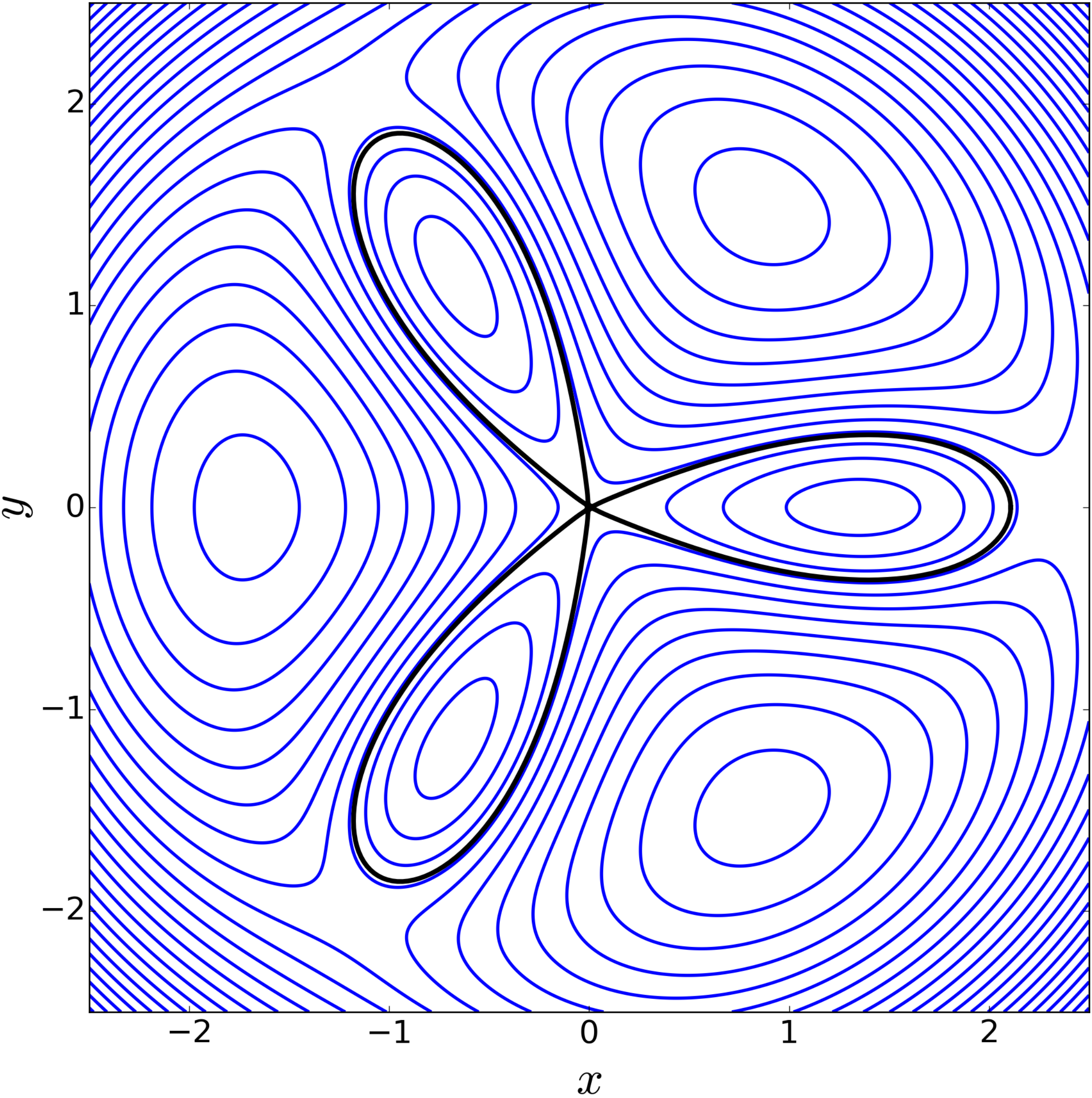} &
\includegraphics[width = 0.316\textwidth]{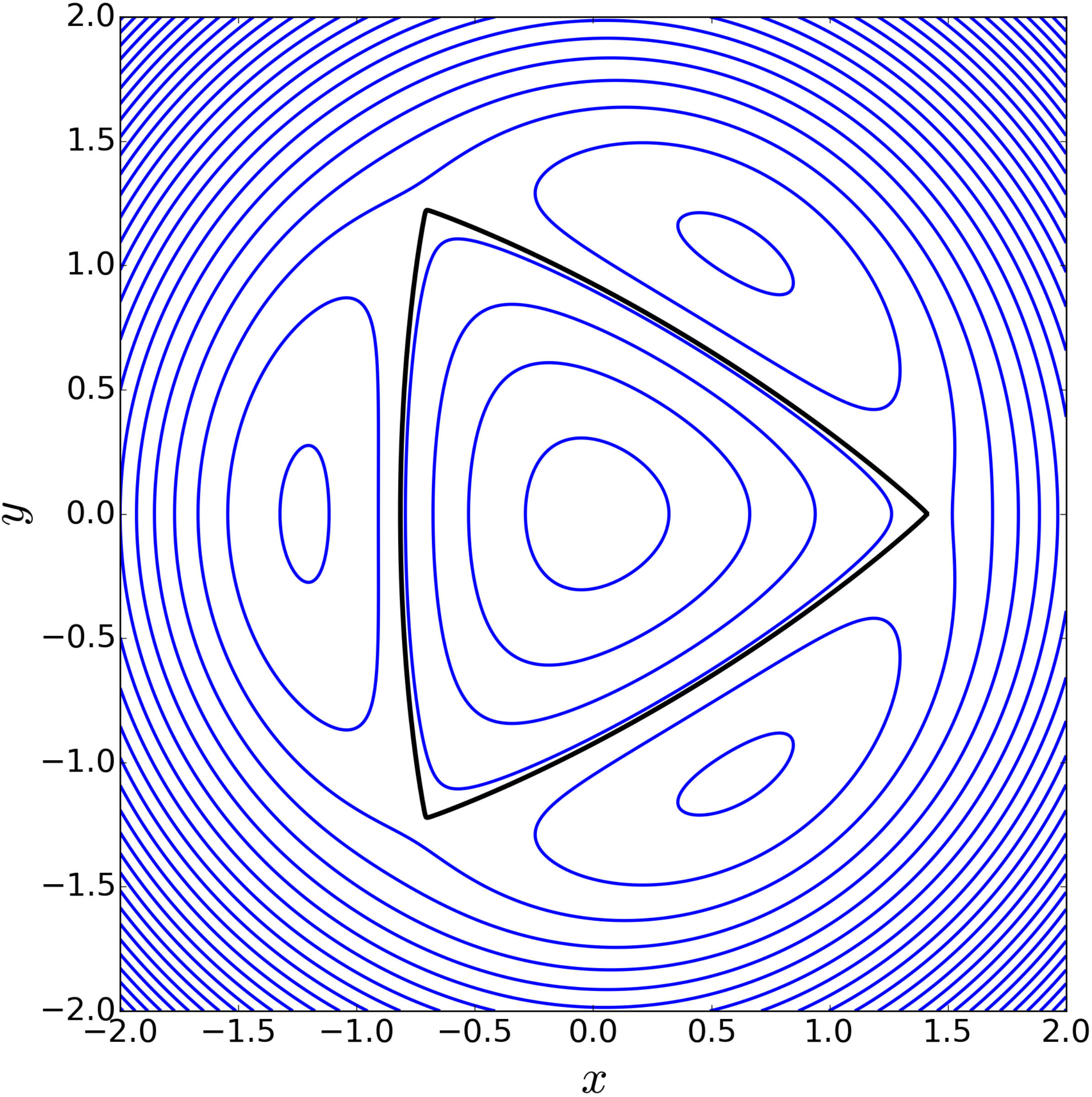}
\end{tabular}
\label{fig:3limit0}
\caption{Form of the (near) limiting solutions (black contours) and
  the co-rotating streamfunction (blue) for the state with (a) the
  smallest $\Omega$, (b) the intermediate value of $\Omega$, and (c)
  the largest $\Omega$.  Here $\EE=0$, corresponding to the Euler
  equations.  The state in (c) at the end of the primary branch of
  solutions was discovered in \cite{11}; the state in (b) was
  discovered in \cite{D95}, while that in (a) is new.}
\end{center}
\end{figure}

A more complete picture of the bifurcation structure of the three-fold
patch solutions for various values of $\EE$ is provided in
figure 9.
Each value of $\EE$ is seen to have a separated branch
which exists at values of $\Omega$ smaller than that of the limiting
circular patch solution.  In all cases, this separated branch is
linearly unstable.  Portions of the primary branch are also unstable,
though there is a window of stability for all $\EE>0$.  The nonlinear
evolution of the unstable states is deferred to another study, but the
instabilities often leave a time-dependent pulsating state.

\begin{figure}
\begin{center}
\includegraphics[height = 0.475\textwidth]{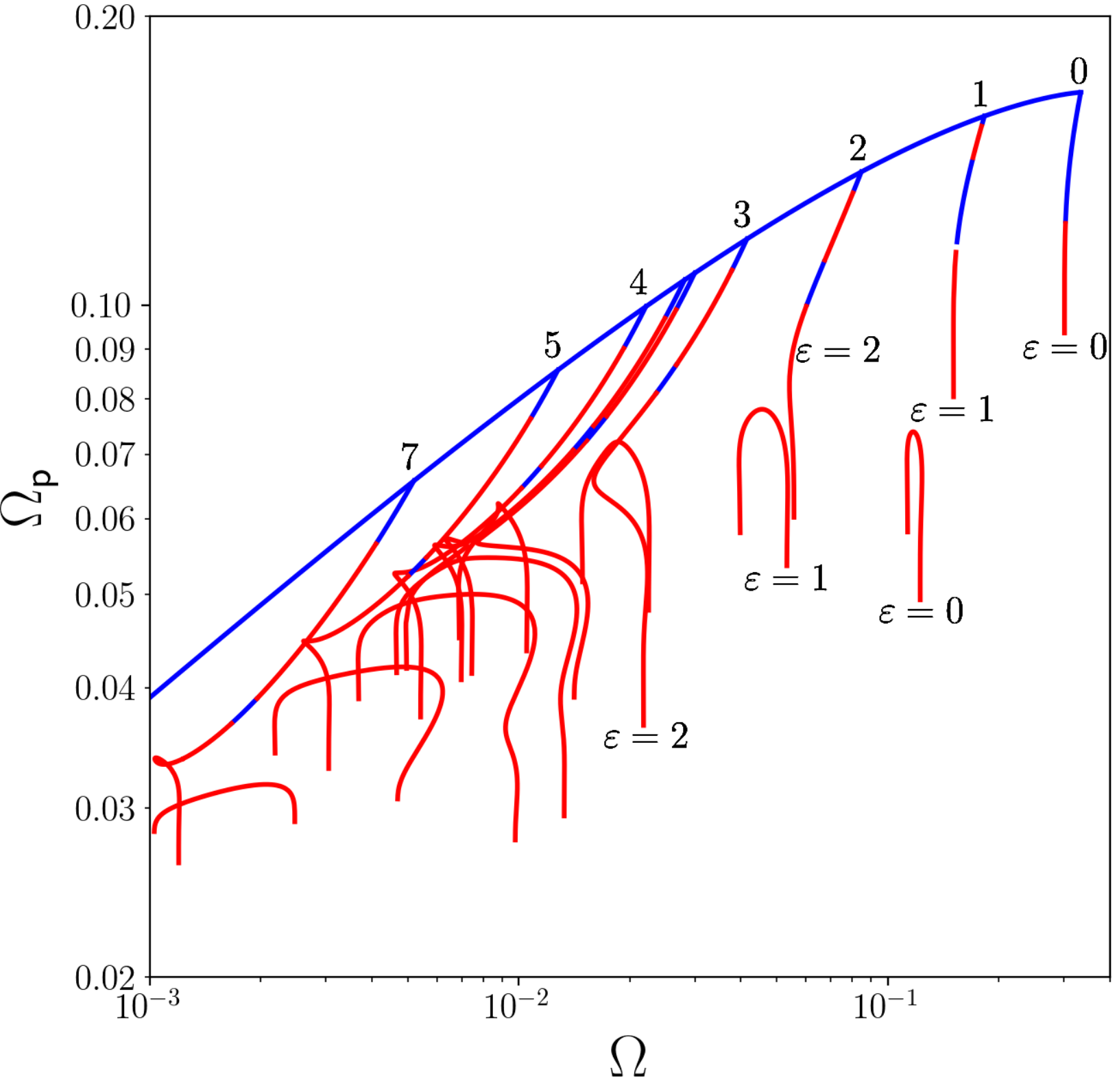}
\label{fig:3structsum}
\caption{Solution branch structure for 3-fold vortex patch equilibria
  for various $\EE$, as labelled.  Blue portions of the curves are
  linearly stable while red portions are linearly unstable, as
  determined by a full linear stability analysis (following \cite{D95}
  and \cite{pd12}).  The uppermost blue curve connects the limiting
  circular patch solutions continuously as a function of $\EE$.}
\end{center}
\end{figure}

%
\section{Tools used for the mathematical analysis}
\label{sec:tools}
The purpose of this section is to review and collect some technical
tools that are used throughout the remainder of this paper. We first
recall some simple facts about H\"older spaces on the unit
circle. Second, we discuss basic properties of modified Bessel
functions. Last we state the classical Crandall-Rabinowitz theorem and
give a generalized version with a parameter.

\subsection{Notation}
Here we introduce some notation that is used in the forthcoming sections.
 \begin{itemize}
 \item[$\bullet$] The unit disc  of the plane and its boundary will be denoted by $\D$ and $\T$, respectively.
 \item[$\bullet$] We denote by $C$ any positive constant that may change from line to line.
 \item[$\bullet$] For a given continuous function $f :\T \rightarrow \C $, we define its mean value by
 \[ \fint_{\T} f(\tau ) d\tau \triangleq \frac{1}{2i\pi} \int_{\T} f(\tau) d \tau, \]
 where $d\tau $ stands for the complex integration.
 \item[$\bullet$] Let  $X$ and $Y$ be two normed spaces. We denote by $\mathcal{L}(X,Y)$ the vector space of all the continuous linear maps  endowed with its usual strong topology.
 \item[$\bullet$] Let  $Y$ be a vector space and $R$ be a subspace, then $Y /R $ denotes the quotient space.
 \end{itemize}

\subsection{ Modified Bessel functions}\label{Bessel Function}
This section is devoted to some classical properties of Bessel
functions of imaginary argument. We start with the Bessel function
of the first kind and order $\nu$ given by the expansion
 $$
 J_\nu(z)=\sum_{m=0}^{+ \infty} \frac{(-1)^m\left(\frac{z}{2}\right)^{\nu+2m}}{m!\Gamma(\nu+m+1)}, \quad |\textnormal{arg} (z)|<\pi.
 $$
Note that this sum converges in a classical way provided that
$\Gamma(\nu+m+1)$ exists for any positive integer.  In addition, for
$\nu=n\in\Z$ it is known that Bessel functions admit the following
integral representation:
 $$
 \forall \, z\in\C,\quad  J_n(z)=\frac{1}{\pi}\int_0^{\pi}\cos( n\theta- z\sin\theta)d\theta.
 $$
Bessel functions of imaginary argument, denoted by $I_{\nu}$ and $K_{\nu}$,
are given by
  \[I_{\nu}(z)=\sum_{m=0}^{+ \infty} \frac{\left(\frac{z}{2}\right)^{\nu+2m}}{m!\Gamma(\nu+m+1)},  \quad |\textnormal{arg} (z)|<\pi
\]
and 
\[K_{\nu}(z)=\frac\pi2\frac{I_{-\nu}(z)-I_\nu(z)}{\sin(\nu \pi)},  \nu\in\mathbb{C}\backslash\mathbb{Z}\quad |\textnormal{arg} (z)|<\pi.
\]
However, for $\nu=n\in\Z$ we set  $\displaystyle{K_n(z)=\lim_{\nu\to n}K_\nu(z)}$. At this stage we recall 
useful expansions for $K_n$ that can be found for instance in \cite[p. 79-80]{Watson},

 \begin{equation}\label{Expand1} K_0(z)=-\log\bigg(\frac{z}{2}\bigg)I_0(z)+\sum_{m=0}^{\infty} \frac{(\frac{z}{2})^{2m}}{(m!)^2 }\psi(m+1),\quad K_0^\prime(z)=-K_1(z),
 \end{equation}
 where 
 \[\psi(1)=-\gamma\quad \hbox{and}\quad \forall  m\in \N^*,\quad \psi(m+1)=\sum_{k=1}^m\frac{1}{k}-\gamma.
 \]
 In addition, for $n\in \N^*$  
 \begin{eqnarray*}K_n(z)&=& (-1)^{n+1} \sum_{m=0}^{+ \infty} \frac{\left( \frac{z}{2} \right)^{n+2m}}{m!(n+m)!} \Bigg(\log\Big(\frac{z}{2}\Big)-\frac{1}{2}\psi(m+1)-\frac{1}{2}\psi(n+m+1) \Bigg)\\
 && + \frac{1}{2}\sum_{m=0}^{n-1} \frac{(-1)^m (n-m-1)!}{m! \left( \frac{z}{2}\right)^{n-2m}}.
 \end{eqnarray*}
 
 Another useful property is the positivity of $I_n$ and $K_n$. In fact, for any $n\in \N$ we have 
\begin{equation}\label{posit1}
 \forall x>0,\, I_n(x)>0\quad \hbox{and}\quad K_n(x)>0.
\end{equation}
The first one is obvious from the definition since each term in the
sum is strictly positive. As for the second one, it can be deduced
from the following integral representation found in e.g.\
\cite[p. 181]{Watson},
\begin{equation}\label{Form34}
K_\nu(z)=\int_0^{+\infty} e^{-z\cosh t}\cosh(\nu t) dt.
\end{equation}
Another useful identity found in \cite[p. 441]{Watson}
deals with Nicholson's integral representation of $I_n(z) K_n(z)$: for
$n\in \N$
 \begin{equation}\label{Nichols}
 I_n(z) K_n(z)=\frac{2(-1)^n}{\pi}\int_0^{\frac\pi2}K_0(2z\cos \theta) \cos(2n \theta) d\theta.
 \end{equation}

For the convenience of the reader, we next describe the relationship
between $K_0$ and the Green function associated with Helmholtz's operator
in two-dimensional space. Let $\varepsilon \in \R^*$ and consider in
the distribution sense the equation
 $$
( -\Delta+\varepsilon^2 )G_\EE=\delta_0, \textnormal{in}\quad \mathcal{S}^\prime(\R^2).
 $$
Then using a Fourier transform we obtain
 $$
 \widehat{G}_\EE(\xi)=\frac{1}{|\xi|^2+\varepsilon^2}, \quad\forall\, \xi\in\R^2.
 $$
Thus by a scaling argument, we have
 $$
 G_\EE(x)=G_1(\EE x), \quad\textnormal{with}\quad  G_1(x)=\frac{1}{4\pi^2}\int_{\R^2}\frac{e^{i x\cdot \xi}}{1+|\xi|^2}d\xi. 
 $$
Hence, a change of variables using  polar coordinates  yields
 \begin{eqnarray*}
 G_1(x)&=&\frac{1}{4\pi^2}\int_{0}^{+\infty}\frac{r}{1+r^2}\int_{0}^{2\pi}\cos(|x| r\cos\theta)d\theta dr\\
 &=&\frac{1}{2\pi}\int_{0}^{+\infty}\frac{r J_0(|x| r)}{1+r^2}dr\\
 &=&\frac{1}{2\pi}K_0(|x|),
 \end{eqnarray*}
where in the last line we have used an identity from
\cite[p. 425]{Watson}. As an application we show how to recover
the velocity from the domain of the patch in \eqref{sqg}. In fact, if
$D$ is a smooth bounded simply-connected domain and $q={\bf{1}}_D$,
then from the foregoing results the streamfunction $\psi,$ which is
the solution of the elliptic equation
 $$
 (\Delta-\EE^2)\psi={\bf{1}}_D 
 $$
is given explicitly by
 $$
 \psi(x)=-\frac{1}{2\pi}\int_{\R^2}K_0(|\EE | |x-y|) {\bf{1}}_D(y) dA(y)
 $$
where $dA$ denotes the planar Lebesgue measure. It follows that the
velocity induced by the patch $v=\nabla^\perp \psi$ takes the form
 
 \begin{equation}\label{BE38}
 v(x)=\frac{1}{2\pi}\int_{\partial D}K_0\big( |\EE | |x-\xi|\big)d\xi,
 \end{equation}
where the integration should be understood in the complex sense.

\subsection{Boundary equations}\label{Boundary equations}

In what follows we state the boundary equation of a rotating patch.
First, the initial data $q_0={\bf 1}_D$ generate a rotating patch
about the origin with uniform angular velocity $\Omega\in\R$ if
$$
q(t)={\bf 1}_{D_t} \quad \hbox{with}\quad D_t= e^{it \Omega} D.
$$
We may check that this is equivalent to 
$$
\big(v(x)-\Omega x^\perp\big)\cdot \vec{n}(x)=0,\quad \forall x\in \partial D
$$
with $\vec{n}(x)$ being the unit outward normal vector to the boundary
at the point $x$. The velocity $v$ induced by $q_0$ is given by
\eqref{BE38}.  Using complex notation we find that
 \[ \forall w \in \T \text{, } G(\varepsilon,\Omega,\Phi)(w)=0, \]
with 
\begin{equation*}
G(\varepsilon, \Omega,\Phi)(w)= \textnormal{Im} \left\lbrace  \Omega \Phi(w)\overline{\Phi'(w)}\overline{w}- \overline{\Phi'(w)} \overline{w} \fint_{\T} \Phi'(\tau) K_0\Big(|\EE|  \vert \Phi(w)- \Phi(\tau)\vert \Big) d\tau  \right \rbrace
\end{equation*}
and $\Phi: \mathbb{T}\to \C$ is at least a $C^1$ parametrization of
the boundary.  Actually, we may add a constant term in the kernel
$K_0$ without changing the equation. Thus according to the singularity
structure of $K_0$ near the origin detailed in \eqref{Expand1} the
suitable constant to add is $\log\Big(\frac{|\varepsilon|}{2}\Big)$.
Therefore

\begin{equation}\label{tildeG_j}
G(\varepsilon, \Omega,\Phi)(w)= \textnormal{Im} \left\lbrace  \Omega \Phi(w)\overline{\Phi'(w)}\overline{w}- \overline{\Phi'(w)} \overline{w} \fint_{\T} \Phi'(\tau) K_0^\EE\big( \vert \Phi(w)- \Phi(\tau)\vert \big) d\tau  \right \rbrace
\end{equation}
with
\[ K_0^\EE(x)\triangleq K_0(|\EE|  x)+\log\big({|\varepsilon|}/{2}\big).
\]
If we let $\EE\to 0$, then without surprise we get the vortex patch
equation associated with the Euler equations described for example in
\cite{19}:
\[G_E(\Omega, \Phi(w))= \hbox{Im}\bigg\{\Big( \Omega \overline{\Phi(w)}+\frac12 \fint_{\T} \frac{\overline{\Phi(\tau)}-\overline{\Phi(w)}}{\Phi(\tau)-\Phi(w)} \Phi'(\tau) d \tau \Big) w \Phi'(w)\bigg\}.\]
One may notice that
\begin{equation}\label{Eq71}
G(0,\Omega, \Phi(w))=- {G_E(\Omega, \Phi(w))}.
\end{equation}
Indeed, starting from the general formula
\[
\fint_{\partial D} \log|z-\xi|d\xi=-\frac12\overline{\fint_{\partial D}\frac{\overline\xi-\overline z}{\xi-z}d\xi}
\]
we find by a change of variables 
\begin{equation}\label{Flog} \fint_{\T} \log(\vert \Phi(w)-\Phi(\tau)\vert) \Phi'(\tau) d \tau =-\frac{1}{2} \overline{\int_{\T}\frac{\overline{\Phi(\tau)}-\overline{\Phi(w)}}{\Phi(\tau)-\Phi(w)} \Phi'(\tau) d \tau}. 
\end{equation}
Thus
\[G_E(\Omega, \Phi(w))= -\hbox{Im}\bigg\{\Big( \Omega {\Phi(w)}+\fint_{\T} \log(\vert \Phi(w)-\Phi(\tau)\vert) \Phi'(\tau) d \tau  \Big) \overline{w}\,\overline{ \Phi'(w)}\bigg\}.\]
It suffices now to use the expansion \eqref{Expand1} to deduce  that  
\[
\forall x\neq0,\quad \lim_{\EE\to 0}K_0^\EE(x)=-\log(x/2)
\]
and thus we find \eqref{Eq71}.

\section{Bifurcation to $m-$fold symmetric vortex patch equilibria}
\label{sec:mfold}

The main task of this section is to prove the existence of rotating
m-fold vortex patch (relative) equilibria, or `V-states' \cite{11},
for the QGSW model given by $(\ref{sqg})$. In the first section we
state our main result.  The proof is carried out in several steps and
is detailed in different sections.  The basic tool is the classical
Crandall-Rabinowitz's theorem, and for the study of the imperfect
bifurcation we need a slight generalization of this theorem.

\subsection{Main result}

We first state our principal result concerning the existence of a
countable family of bifurcating curves with m-fold symmetry from
Rankine (circular) vortices. More precisely, we obtain the following result.
\begin{theorem}\label{existence}
  Let $\EE\in\R$, then for each integer $m\geq 1$
  there exists a curve (or branch) of $m$-fold rotating vortex patches
  bifurcating from the unit disc at the angular velocity
\[ \Omega_m(\varepsilon)=I_1( |\varepsilon | )K_1( |\varepsilon | )-I_m( | \varepsilon |)K_m(| \varepsilon |).\]
Moreover, the existence is uniform for vanishing $\varepsilon$. More
precisely, there exists $a>0$ and continuous functions $\varphi:
(-a,a)^2 \rightarrow \R$, $\psi: (-a,a)^2 \rightarrow C^{1+
  \alpha}(\T)$ satisfying
$$\varphi(0,0)=\Omega_m(0)=\frac{m-1}{2m},\quad \psi(0,0)=0
$$ such that 
\[\Omega=\varphi(\EE,s),\quad \psi(\varepsilon,s,w)=\sum_{n\geq2}a_{nm-1}(\varepsilon,s) \overline{w}^{nm-1} \]
and 
\[ G\big(\varepsilon, \varphi(\varepsilon,s), w+s\overline{w}^{m-1} + s \psi(\varepsilon,s,w)\big)=0, \, \forall\, (\EE,s)\in (-a,a)^2,\quad \forall w\in \T. \]
Recall that the function $G$ defining the vortex patch equation is given
by \eqref{tildeG_j}.
\end{theorem}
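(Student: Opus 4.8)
The plan is to apply the Crandall--Rabinowitz theorem --- in the $\EE$-dependent version recorded in Section~\ref{sec:tools} --- to the functional $G$ of \eqref{tildeG_j}, linearised along the trivial branch $\Phi=\mathrm{id}$, which solves $G(\EE,\Omega,\mathrm{id})=0$ for every $\Omega$ and $\EE$ because it parametrises the unit disc (a short computation, using that $\fint_{\T}K_0^\EE(|w-\tau|)\,d\tau$ is a real multiple of $w$, gives this). First I would fix the functional setting. For $\alpha\in(0,1)$ put
\[
X_m=\Big\{f\in C^{1+\alpha}(\T):\ f(w)=\sum_{n\geq1}a_n\,\overline{w}^{nm-1},\ a_n\in\R\Big\},
\]
take $\Phi=\mathrm{id}+f$ with $f$ in a small ball $B_r\subset X_m$ --- which builds in the $m$-fold symmetry $\Phi(e^{2i\pi/m}w)=e^{2i\pi/m}\Phi(w)$ and the reflection symmetry $\Phi(\overline{w})=\overline{\Phi(w)}$ --- and let $Y_m$ be the companion space of $m$-fold symmetric real functions on $\T$ of H\"older class $C^\alpha$ whose Fourier series involves only $\{\sin(nm\theta)\}_{n\geq1}$. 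One must then check that $(\EE,\Omega,f)\mapsto G(\EE,\Omega,\mathrm{id}+f)$ is a well-defined $C^1$ map from $\R\times\R\times B_r$ into $Y_m$, jointly in all variables and including $\EE=0$. The only delicate point is the logarithmic singularity of $K_0^\EE$: by \eqref{Expand1} one writes $K_0^\EE(x)=-\log(x/2)\,I_0(|\EE|x)+\rho(\EE,x)$ with $\rho$ smooth and even in $x$ and analytic in $\EE$, so the analysis reduces to the Euler singular kernel $x\mapsto-\log(x/2)$, whose mapping properties on these spaces are classical, plus smooth, $\EE$-analytic corrections.

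The core of the proof is the computation of the linear operator $\Ll_{\EE,\Omega}:=\partial_\Phi G(\EE,\Omega,\mathrm{id})$ and the proof that it is diagonal in the basis $\{\overline{w}^{nm-1}\}_{n\geq1}$. Perturbing $\Phi=\mathrm{id}+t\,\overline{w}^{nm-1}$, differentiating at $t=0$, and using rotation invariance to evaluate the resulting contour integrals at $w=1$, one expands $K_0(|\EE|\,|w-\tau|)$ on $\T\times\T$ by Graf's addition theorem --- equivalently, one reads off the Fourier coefficients of $\theta\mapsto K_0^\EE(2|\sin(\theta/2)|)$, which are precisely the Bessel products $I_k(|\EE|)K_k(|\EE|)$ that already appear in Nicholson's formula \eqref{Nichols}. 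This yields
\[
\Ll_{\EE,\Omega}\big[\overline{w}^{nm-1}\big]=c_n(\EE)\,\big(\Omega-\Omega_{nm}(\EE)\big)\,\sin(nm\theta),\qquad \Omega_k(\EE)=I_1(|\EE|)K_1(|\EE|)-I_k(|\EE|)K_k(|\EE|),
\]
where $c_n(\EE)$ is an explicit nonzero constant whose value is immaterial here; what matters is diagonality together with the fact that the $n=1$ eigenvalue vanishes exactly at $\Omega=\Omega_m(\EE)$. In the limit $\EE\to0$ one has $I_k(|\EE|)K_k(|\EE|)\to\tfrac{1}{2k}$, so $\Omega_k(0)=\tfrac12-\tfrac{1}{2k}$ and one recovers the classical Burbea eigenvalues, in agreement with $\Omega_m(0)=\frac{m-1}{2m}$.

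With the spectrum at hand I would check the Crandall--Rabinowitz hypotheses at $\Omega=\Omega_m(\EE)$. The kernel of $\Ll_{\EE,\Omega_m(\EE)}$ is the line spanned by $\overline{w}^{m-1}$ provided $\Omega_m(\EE)\neq\Omega_{nm}(\EE)$ for every $n\geq2$, i.e.\ $I_m(|\EE|)K_m(|\EE|)\neq I_{nm}(|\EE|)K_{nm}(|\EE|)$; this follows from the strict decay of $k\mapsto I_k(x)K_k(x)$ on $\N$ for each fixed $x\geq0$ (a known monotonicity property of Bessel products, trivial at $x=0$ where the value is $\tfrac{1}{2k}$). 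The range of $\Ll_{\EE,\Omega_m(\EE)}$ is the closed codimension-one subspace of $Y_m$ complementary to the line spanned by $\sin(m\theta)$, and the transversality condition holds because $\partial_\Omega\Ll_{\EE,\Omega}\big[\overline{w}^{m-1}\big]=c_1(\EE)\sin(m\theta)$ is a nonzero multiple of the missing direction. The classical theorem then gives, for each fixed $\EE$, a nontrivial branch through the disc. For the uniformity statement one invokes instead the $\EE$-parametrised version: since $\Omega_m(\EE)$, the linearisation $\Ll_{\EE,\Omega}$, the spectral gaps $\Omega_{nm}(\EE)-\Omega_m(\EE)$ and the transversality constant all depend continuously --- indeed analytically --- on $\EE$ up to $\EE=0$, one obtains $a>0$ and continuous maps $\varphi,\psi$ with the asserted structure, in particular $\psi(\EE,s,w)=\sum_{n\geq2}a_{nm-1}(\EE,s)\overline{w}^{nm-1}$ lying in the complement of the kernel inside $X_m$, and $\varphi(0,0)=\Omega_m(0)=\frac{m-1}{2m}$.

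The step I expect to be the main obstacle is the explicit identification of $\Ll_{\EE,\Omega}$: one must differentiate a nonlinear, nonlocal operator whose kernel is only log-regular, keep uniform control as $\EE\to0$, and carry out the Bessel bookkeeping that turns the linearised contour integrals into the products $I_k(|\EE|)K_k(|\EE|)$ and produces the clean diagonal form above. Establishing the one-dimensionality of the kernel --- hence the monotonicity of $k\mapsto I_k(x)K_k(x)$ --- is a second, essential but lighter, ingredient, as is the $C^1$-regularity of $G$ across the H\"older spaces uniformly in $\EE$.
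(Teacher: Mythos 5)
Your proposal is correct and follows essentially the same route as the paper: the same spaces $X_m,Y_m$, the same splitting of $K_0^\EE$ into the Euler logarithm plus smooth $\EE$-analytic corrections for the $C^1$ regularity, Nicholson's formula to diagonalise the linearisation with eigenvalues $\Omega-\Omega_{nm}(\EE)$, monotonicity of $k\mapsto I_k(x)K_k(x)$ for the one-dimensional kernel, and the classical plus parametrised Crandall--Rabinowitz theorems for the fixed-$\EE$ branch and the uniformity in $\EE$. The only substantive step you assert rather than prove is that the range of the linearised operator is closed of codimension one: in $C^{1+\alpha}$ this is not automatic from diagonality, and the paper establishes it by showing the formal preimage $h_n=g_{n+1}/\big((n+1)(\Omega_{n+1}(\EE)-\Omega_m(\EE))\big)$ actually lies in $C^{1+\alpha}$, using the decay $I_n(|\EE|)K_n(|\EE|)\le C\ln(n+1)/n$ and a convolution argument with $L^1$ kernels.
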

\begin{remark}
On the one hand, the case $m=1$ is trivial. It corresponds simply to
the translation of the unit disc, and with the notation of the theorem
we have $\psi(\EE,s,w)=0$. On the other hand, the regularity of the
boundary is not at all optimal; like for the Euler equations we guess
that it must be analytic, see \cite{4,20}.

\end{remark}
\begin{remark}
It is known that for the Euler equations the bifurcation diagram of
simply connected vortex patches is organized around Rankine vortices
through a countable collection of pitchfork curves (one for each
symmetry).  Theorem \ref{existence} shows that locally this structure
is preserved for any perturbation size of $\EE$ and therefore there is
no symmetry breakdown. This is not the case however for the
bifurcation diagram close to Kirchhoff's ellipse, as discussed below in
Section \ref{sec:2fold}.
\end{remark}
The proof of Theorem \ref{existence} is a consequence of the materials
developed in next sections. Actually, the first part of
Theorem \ref{existence} follows from Theorem \ref{C-R0}, Proposition
\ref{string11}, and \mbox{Proposition \ref{assumptions}}.  However for
the second part dealing with the stability of Eulerian branches under
small perturbation on $\EE$, we need to make use of Theorem \ref{C-R}.

\subsection{Crandall-Rabinowitz's Theorem with a parameter}
The main objective of this section is to formulate suitable conditions
for the bifurcation from the trivial solutions of a general nonlinear
equation of the type
$$
F(\EE,\lambda,x)=0, \quad F:\R\times\R\times V\to Y
$$
with $Y$ a Banach space and $V$ a neighborhood of \,$0$ in some Banach
space $X$. We assume that $F$ is smooth enough and
$$
\forall \lambda,F(0,\lambda,0)=0.
$$
The starting point is that for $\EE=0$ we know the structure of the
bifurcation diagram near the trivial solutions and it is of interest
to understand how its geometric structure varies with respect to an
arbitrary perturbation in $\EE$.  This is called an imperfect
bifurcation. This subject is well developed in the literature starting
with the pioneering work of Golubitsky and Schaeffer \cite{Glob}, who
classify the bifurcation diagram in a general setting using tools from
the theory of singularities.  Various particular studies related to
the present study have been carried out over the last few decades, and
one may consult for instance \cite{Liu,Shi} and the references
therein. In what follows we formulate some results dealing with
imperfect bifurcations with symmetry persistence. This phenomenon
occurs especially when the trivial solutions do not vary with respect
to the parameter $\EE$, that is
$$
\forall\lambda,\,\forall \EE,\,F(\EE,\lambda,0)=0.
$$
For the steadily-rotating vortex patch solutions of \eqref{sqg}, these
are precisely the suitable conditions enabling a detailed study of the
bifurcations from the unit disc. Next we recall the
classical theorem of Crandall-Rabinowitz \cite{CR} concerning 
bifurcations from trivial solutions. This will be applied to get the
first part of Theorem \ref{existence} when $\EE$ is fixed at an
arbitrary value.
\begin{theorem}\label{C-R0} Let $X, Y$ be two Banach spaces,
  $V$ be a neighborhood of \,$0$ in $X$ and let 
$$
F : \R \times V \to Y
$$
with the following  properties:
\begin{enumerate}
\item $F (\lambda, 0) = 0$ for any $\lambda\in \R$.
\item The partial derivatives $F_\lambda$, $F_x$ and $F_{\lambda x}$ exist and are continuous.
\item $\textnormal{Ker}(\partial_xF(0, 0))=\langle x_0 \rangle$ and $Y/R(\partial_xF(0, 0))$ are one-dimensional. 
\item {\it Transversality assumption}: $\partial_\lambda\partial_xF(0, 0)x_0 \not\in R(\partial_xF(0, 0))$.
\end{enumerate}
If $\mathcal{X}$ is any complement of $\hbox{Ker }(\partial_xF(0, 0))$ in $X$, then there is a neighborhood $U$ of $(0,0)$ in $\R \times X$, an interval $(-a,a)$, and continuous functions $\psi: (-a,a) \to \R$, $\phi: (-a,a) \to Z$ such that $\psi(0) = 0$, $\phi(0) = 0$ and
$$
\big\{(\lambda,x)\in U,\, F(\lambda,x)=0\big\} =\Big\{\big(\psi(s), s x_0+s\phi(s)\big)\,;\,\vert s\vert<a\Big\}\cup\Big\{(\lambda,0)\,;\, (\lambda,0)\in U\Big\}.
$$
\end{theorem}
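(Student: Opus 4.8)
The plan is to carry out the classical Lyapunov--Schmidt reduction. The one-dimensionality of the kernel and cokernel in hypothesis (3) allows one to reduce $F(\lambda,x)=0$, in a neighbourhood of $(0,0)$, to a single scalar equation in the two real variables $(\lambda,s)$. This reduced equation has the trivial branch built in as an explicit factor; once that factor is divided out, the transversality hypothesis (4) turns out to be exactly the non-degeneracy needed to solve what remains by the implicit function theorem.

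First I would fix the splittings. Since $\textnormal{Ker}(\partial_xF(0,0))=\langle x_0\rangle$ is one-dimensional, choose a closed complement $\mathcal{X}$, so $X=\langle x_0\rangle\oplus\mathcal{X}$ with bounded coordinate projections. Since $Y/R(\partial_xF(0,0))$ is one-dimensional, $R(\partial_xF(0,0))$ is closed, and we may choose a bounded projection $Q:Y\to Y$ with $\textnormal{Ker}\,Q=R(\partial_xF(0,0))$ together with a bounded functional $\ell$ on $Y$ whose kernel is $R(\partial_xF(0,0))$. Writing $x=sx_0+z$ with $s\in\R$ and $z\in\mathcal{X}$, the equation $F(\lambda,x)=0$ becomes the pair
\[
(I-Q)\,F(\lambda,sx_0+z)=0,\qquad \langle \ell,F(\lambda,sx_0+z)\rangle=0 .
\]
Hypothesis (2) makes $F$ of class $C^1$ on $\R\times V$, which is what is needed to apply the implicit function theorem below.

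Next I would dispose of the first, ``auxiliary'', equation. The map $(\lambda,s,z)\mapsto(I-Q)F(\lambda,sx_0+z)$ vanishes at $(0,0,0)$ by (1), and its partial derivative in $z$ at that point is $(I-Q)\,\partial_xF(0,0)\big|_{\mathcal{X}}$. This is an isomorphism of $\mathcal{X}$ onto $R(\partial_xF(0,0))$: injective because $\langle x_0\rangle\cap\mathcal{X}=\{0\}$, surjective onto $R(\partial_xF(0,0))$ because $\partial_xF(0,0)x_0=0$, bounded with closed range hence boundedly invertible by the open mapping theorem, while $I-Q$ restricts to the identity on $R(\partial_xF(0,0))$. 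The implicit function theorem thus yields a $C^1$ map $z=w(\lambda,s)$ near $(0,0)$ with $w(0,0)=0$; since $z=0$ solves the equation when $s=0$ (by (1)) and the solution is locally unique, $w(\lambda,0)=0$ for all $\lambda$, and the implicit-function formula gives $\partial_sw(0,0)=-[(I-Q)\partial_xF(0,0)|_{\mathcal{X}}]^{-1}\,(I-Q)\partial_xF(0,0)x_0=0$. Writing $w(\lambda,s)=s\,\widetilde w(\lambda,s)$ with $\widetilde w$ continuous, we have $\widetilde w(0,0)=\partial_sw(0,0)=0$.

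Finally I would treat the scalar ``bifurcation'' equation $g(\lambda,s):=\langle\ell,F(\lambda,sx_0+w(\lambda,s))\rangle=0$. Because $w(\lambda,0)=0$ and $F(\lambda,0)=0$ we have $g(\lambda,0)\equiv0$, hence $g(\lambda,s)=s\,h(\lambda,s)$ with $h(\lambda,s)=\int_0^1\partial_sg(\lambda,\theta s)\,d\theta$ continuous, and the nontrivial solutions of $F=0$ near the origin are exactly those with $h(\lambda,s)=0$. One computes $h(0,0)=\partial_sg(0,0)=\langle\ell,\partial_xF(0,0)[x_0+\partial_sw(0,0)]\rangle=0$, using $\partial_xF(0,0)x_0=0$ and $\partial_sw(0,0)=0$. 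Moreover $h(\lambda,0)$ depends on $\lambda$ only through $\partial_xF(\lambda,0)$, which is $C^1$ in $\lambda$ by continuity of $F_{\lambda x}$, so $h(\lambda,0)$ is $C^1$ in $\lambda$ and, using $\partial_sw(0,0)=0$ once more, $\partial_\lambda h(0,0)=\langle\ell,\partial_\lambda\partial_xF(0,0)x_0\rangle$, all remaining terms lying in $R(\partial_xF(0,0))=\textnormal{Ker}\,\ell$. By hypothesis (4), $\partial_\lambda\partial_xF(0,0)x_0\notin R(\partial_xF(0,0))$, so $\partial_\lambda h(0,0)\neq0$, and the implicit function theorem provides $a>0$ and a continuous $\psi:(-a,a)\to\R$ with $\psi(0)=0$, $h(\psi(s),s)=0$, and, locally, $h(\lambda,s)=0\Leftrightarrow\lambda=\psi(s)$. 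Setting $\phi(s):=\widetilde w(\psi(s),s)$, which is continuous with $\phi(0)=0$, the point $(\psi(s),sx_0+s\phi(s))$ solves $F=0$; conversely, any solution $(\lambda,x)$ in a small enough neighbourhood $U$ of $(0,0)$, decomposed as $x=sx_0+z$, must have $z=w(\lambda,s)$ by the uniqueness in the auxiliary step, whence $s\,h(\lambda,s)=0$, so either $s=0$ (and then $x=0$) or $\lambda=\psi(s)$; shrinking $U$ to the common domains gives the stated description of the zero set. The one step I expect to require genuine care is the regularity bookkeeping — checking that mere continuity of $F_\lambda,F_x,F_{\lambda x}$ in (2), rather than $C^2$-smoothness of $F$, really is enough to make $g$ of class $C^1$, to extract the factor $s$ with a continuous remainder, and to run the final implicit function theorem; this is routine but delicate and is precisely the point treated carefully in \cite{CR}.
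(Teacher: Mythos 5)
Your proposal is correct and follows essentially the same route the paper takes: the paper cites \cite{CR} for this statement and proves its parametric generalization (Theorem~\ref{C-R}) by exactly the Lyapunov--Schmidt reduction you describe --- same splittings, same auxiliary equation solved by the implicit function theorem, same factorization of $s$ out of the bifurcation equation, and the same use of transversality to solve for $\lambda=\psi(s)$. The one delicate point you flag at the end (that only continuity of $F_\lambda$, $F_x$, $F_{\lambda x}$ is assumed, so the final step needs the weak implicit function theorem of Appendix~A of \cite{CR} rather than the standard one) is precisely the point the paper also defers to \cite{CR}.
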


The next result deals with a slight generalization of the preceding
Crandall-Rabinowitz's theorem to include a parameter. This allows us to treat
the stability of the bifurcation diagram under a small
perturbation. This will be the cornerstone of the proof of the second
part of \mbox{Theorem \ref{existence}}.
\begin{theorem}\label{C-R} 
Let $X, Y$ be two Banach spaces, $V$ a neighbourhood of $0$ in $X$ and let 
$$
F : (-1,1)\times \R \times V \to Y
$$
be  a function of class $C^1$
with the following  properties:
\begin{enumerate}
\item $F (\EE,\lambda, 0) = 0$ for any $\EE\in\R$ and  $\lambda\in\R$.
\item The partial derivatives $F_{\EE}$, $F_\lambda$, $F_x$ and $F_{\lambda x}$ exist and are continuous.
\item $\textnormal{Ker}(\partial_xF(0, 0,0))=\langle x_0 \rangle$ and $Y/R(\partial_xF(0, 0,0))$ are one-dimensional. 
\item {\it Transversality assumption}: $\partial_\lambda\partial_xF(0,0, 0)x_0 \not\in R(\partial_xF(0,0, 0))$.\end{enumerate}
If $\mathcal{X}$ is any complement of $\hbox{Ker }(\partial_xF(0, 0,0))$ in $X$, then there is a neighborhood  $U$ of $(0,0,0)$, an interval $(-a,a)$, with $a>0$, and continuous functions 
$$\psi: (-a,a)^2 \to \R,\quad \phi: (-a,a)^2 \to \mathcal{X}
$$ such that $\varphi(0,0) = 0$, $\psi(0,0) = 0$ and
$$
\Big\{(\EE,\lambda,x)\in U,\, F(\EE,\lambda,x)=0\Big\}=\Big\{\big(\varepsilon, \psi(\varepsilon,s), s x_0+s \phi(\varepsilon,s)\big)\,;\,|\varepsilon|,\vert s\vert<a\Big\}\cup\Big\{(\varepsilon,\lambda,0)\,;\, (\varepsilon,\lambda,0)\in U\Big\}.
$$
\end{theorem}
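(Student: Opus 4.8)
The plan is to run the classical Lyapunov--Schmidt reduction underlying Theorem \ref{C-R0}, but now carrying the extra variable $\EE$ along as a \emph{spectator} in every application of the implicit function theorem; the only structural feature of $\EE$ that is used is hypothesis (1), i.e.\ the persistence of the trivial branch $x=0$ for all $\EE$ and $\lambda$. Write $R:=R(\partial_xF(0,0,0))$, fix a one--dimensional complement $N$ of $R$ in $Y$ with associated projection $Q:Y\to R$, and recall from hypotheses (3)--(4) and the choice of $\mathcal X$ that $X=\langle x_0\rangle\oplus\mathcal X$ and that $L_0:=\partial_xF(0,0,0)\big|_{\mathcal X}:\mathcal X\to R$ is a Banach--space isomorphism.

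First I would factor out the trivial branch. Using hypothesis (1) and the fundamental theorem of calculus, for $x=s(x_0+y)$ with $s\in\R$ and $y\in\mathcal X$ one has $F(\EE,\lambda,s(x_0+y))=s\,\widetilde F(\EE,\lambda,s,y)$ with
\[
\widetilde F(\EE,\lambda,s,y):=\int_0^1\partial_xF\big(\EE,\lambda,ts(x_0+y)\big)(x_0+y)\,dt ,
\]
a formula which also makes sense at $s=0$, where $\widetilde F(\EE,\lambda,0,y)=\partial_xF(\EE,\lambda,0)(x_0+y)$. The continuity of $F_\EE,F_\lambda,F_x,F_{\lambda x}$ guarantees (by the same bookkeeping as in the classical case) that $\widetilde F$ possesses continuous partials in $(\lambda,y)$ near $(0,0,0,0)$; since small \emph{nontrivial} solutions of $F=0$ correspond exactly to small solutions of $\widetilde F=0$ via $x=s(x_0+y)$, it suffices to study $\widetilde F$, together with the slice $x\in\mathcal X$ handled separately below.

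Next comes the reduction. Because $\partial_y\widetilde F(0,0,0,0)\big|_{\mathcal X}=L_0$ and $QL_0=L_0$ is invertible onto $R$, the implicit function theorem applied to the range equation $Q\widetilde F(\EE,\lambda,s,y)=0$ produces a $C^1$ map $y=\Psi(\EE,\lambda,s)$ with $\Psi(0,0,0)=0$. Substituting it back yields the scalar \emph{reduced bifurcation equation} $\Phi(\EE,\lambda,s):=(\mathrm{Id}-Q)\widetilde F(\EE,\lambda,s,\Psi(\EE,\lambda,s))\in N\cong\R$, with $\Phi(0,0,0)=0$. Differentiating in $\lambda$ at the origin, the chain--rule term through $\Psi$ dies because $(\mathrm{Id}-Q)L_0=0$, so
\[
\partial_\lambda\Phi(0,0,0)=(\mathrm{Id}-Q)\,\partial_\lambda\widetilde F(0,0,0,0)=(\mathrm{Id}-Q)\big[\partial_\lambda\partial_xF(0,0,0)x_0\big],
\]
which is nonzero in the line $N$ precisely by the transversality assumption (4). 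A further application of the implicit function theorem, again with $\EE$ as a parameter, gives a continuous (indeed $C^1$) function $\lambda=\psi(\EE,s)$ with $\psi(0,0)=0$ solving $\Phi(\EE,\psi(\EE,s),s)=0$; putting $\phi(\EE,s):=\Psi(\EE,\psi(\EE,s),s)$ yields the asserted family $(\EE,\psi(\EE,s),s(x_0+\phi(\EE,s)))$ of zeros of $F$, with $\psi(0,0)=\phi(0,0)=0$. To see that this family together with $\{(\EE,\lambda,0)\}$ exhausts the zero set in a suitably shrunk $U$, write an arbitrary small solution as $x=\sigma x_0+z$ with $z\in\mathcal X$: for $\sigma\neq0$ this is the case $s=\sigma$, $y=z/\sigma$ covered by the uniqueness in the two steps above, while for $\sigma=0$ one applies the implicit function theorem directly to $QF(\EE,\lambda,z)=0$ (linearisation $L_0$ in $z$), forcing $z=0$, and then $(\mathrm{Id}-Q)F(\EE,\lambda,0)=0$ holds automatically by hypothesis (1).

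I expect the only genuine obstacle to be the regularity bookkeeping in the second step: establishing that the divided--difference function $\widetilde F$ is $C^1$ in the variables to which the implicit function theorem is applied, using only the hypothesised existence and continuity of $F_\EE,F_\lambda,F_x,F_{\lambda x}$ (one may not naively differentiate under the integral sign in $s$). This is exactly the technical point already present in the proof of the classical Theorem \ref{C-R0}; once it is in place, the addition of $\EE$ is cosmetic, since $\EE$ never enters an invertibility or transversality condition and merely rides along as an extra coordinate in each implicit--function argument.
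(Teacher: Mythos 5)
Your argument is correct in substance but organises the reduction in the opposite order from the paper, essentially following Crandall--Rabinowitz's original scheme rather than the paper's. The paper performs the Lyapunov--Schmidt splitting on $F$ itself: it first solves the range component of $F(\EE,\lambda,sx_0+z)=0$ for the complement variable $z=\varphi(\EE,\lambda,s)$ by the implicit function theorem, uses hypothesis (1) to get $\varphi(\EE,\lambda,0)=0$ and hence $\partial_s\varphi(0,0,0)=0$ and the factorisation $\varphi=s\varphi_1$, and only then divides the remaining scalar equation by $s$ before applying the weak implicit function theorem in $\lambda$. You divide by $s$ at the outset, replacing $F$ by the divided difference $\widetilde F$, and then do Lyapunov--Schmidt on $\widetilde F$. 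Both routes work, and you correctly isolate the genuine technical point: the derivatives needed for the two implicit-function steps must be computed from the quotient form, giving $\partial_y\widetilde F(\EE,\lambda,s,y)=\partial_xF\big(\EE,\lambda,s(x_0+y)\big)$ and $\partial_\lambda\widetilde F=\int_0^1\partial_\lambda\partial_xF\big(\EE,\lambda,ts(x_0+y)\big)(x_0+y)\,dt$ (using $\partial_\lambda F(\EE,\lambda,0)=0$), so that only the hypothesised derivatives $F_x$ and $F_{\lambda x}$ enter and no derivative in $s$ is ever required. In both versions $\EE$ is a pure spectator, precisely because hypothesis (1) makes the trivial branch persist for all $\EE$.

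The one place your ordering costs something is the exhaustiveness of the description of the zero set. Your first implicit-function step gives uniqueness only for $(s,y)$ in a small bidisc, i.e.\ for solutions $x=\sigma x_0+z$ with $\Vert z\Vert\le\delta\,|\sigma|$; a small solution with $\sigma\neq0$ but $\Vert z\Vert$ not comparable to $|\sigma|$ is not a priori of the form ``$s=\sigma$, $y=z/\sigma$ small,'' so the phrase ``covered by the uniqueness in the two steps above'' needs a supplement. The paper sidesteps this because its first reduction has $z$ itself (not $z/s$) as the unknown, so uniqueness holds for all small $(s,z)$ and $z=\varphi(\EE,\lambda,s)=O(s)$ comes out automatically. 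Your proof is repaired by running, for every small $\sigma$, the same argument you already use at $\sigma=0$: the range component of $F(\EE,\lambda,\sigma x_0+z)=0$ has invertible linearisation $L_0$ in $z$, hence a unique small solution $z$, which must then equal $\sigma\Psi(\EE,\lambda,\sigma)$ and is therefore $O(\sigma)$, after which your two steps do apply. With that one-line addition the proposal is a complete proof.
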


\begin{proof}
The proof is a simple adaptation of \cite{CR}.  Let $\langle w_0\rangle$
be a complement of
$\mathcal{Y}\triangleq\mathcal{R}(\partial_xF(0, 0,0))$ in $Y$. Then
$$
X=\langle x_0\rangle \oplus\mathcal{X} \quad \textnormal{ and }\quad
Y=\langle w_0\rangle \oplus\mathcal{Y}.
$$

Consider  the projection $P: X\mapsto\langle x_0\rangle$ on $\langle x_0\rangle$ along $\mathcal{X}$ given by  
$$
x=s x_0+ z,\, z\in \mathcal X\Longrightarrow  Px=  sx_0
$$
and similarly define the projection   $Q:  Y\mapsto \langle w_0\rangle$ on $\langle w_0\rangle$ along $\mathcal{Y}$.
   Then the equation $F(\varepsilon,\lambda, x)=0$  is equivalent to the system
$$
F_1(\EE,\lambda,s, z)\triangleq(\hbox{Id}-Q)F(\EE,\lambda,s x_0+z)=0\quad \hbox{and}\quad  QF(\EE,\lambda, sx_0+z)=0.
$$
It is clear that for some $\eta>0$, the function $$F_1:(-1,1)\times\R\times(-\eta,\eta) \times \mathcal{U}\to\mathcal{Y}_m$$  is $C^1$ with $\mathcal{U}$ a small neighbourhood of $0$ in $\mathcal{X}$ . Moreover, it is not difficult to check that 
\begin{equation*}
\partial_z F_1(0,0,0,0)=(\hbox{Id}-Q)\partial_xF(0,0,0):\mathcal{X}\to \mathcal{Y}
\end{equation*}
is an isomorphism.
By the implicit function theorem, the solutions of the equation $F_1(\EE,\lambda,s,z)=0$ are described near the point $(0,0,0,0)$ by the parametrization $z=\varphi(\EE,\lambda, s)$ with 
$$\varphi:(-\delta ,\delta)^3\to\mathcal{X}, \quad \delta>0
$$ 
being a $C^1$ function. Therefore we obtain
\begin{equation}\label{zeroXX}
(\hbox{Id}-Q)F\big(\EE,\lambda,s x_0+\varphi(\EE,\lambda,s )\big)=0, \forall |\EE| ,|\lambda|, |s|<\delta.
\end{equation}
Consequently, solving  
 the equation  $F(\varepsilon,\lambda, x)=0$ close to $(0,0,0)$ is equivalent to  
\begin{equation*}
 F_2(\EE,\lambda,s)\triangleq QF\big(\EE, \lambda, s x_0+\varphi(\EE,\lambda,s )\big)=0, \quad \forall\,  |\EE| , |\lambda|,|s|<\delta.
\end{equation*}
Using  the assumption $F(\EE, \lambda,0)=0$, for any $\EE\in\R$ and
$\lambda\in\R$, one deduces by uniqueness  that
\begin{equation}\label{zero1}
\varphi(\EE,\lambda,0)=0,\quad\forall | \EE | , |\lambda|<\delta.
\end{equation}
Now differentiating equation \eqref{zeroXX} with respect to $s$ we get
$$
\partial_s\Big((\hbox{Id}-Q)F\big(\EE,\lambda,s x_0+\varphi(\EE,\lambda,s )\big)\Big)=0, \forall |\EE| ,|\lambda|, |s|<\delta.
$$
In particular we deduce for $s=0$
$$
(\hbox{Id}-Q)\partial_xF\big(\EE,\lambda,\varphi(\EE,\lambda,0)\big)\big(x_0+\partial_s\varphi(\EE,\lambda,0)\big)=0,
$$
which implies, in view of \eqref{zero1},  that
$$
\partial_xF\big(0,0,0\big)\big(\partial_s\varphi(0,0,0)\big)=0.
$$
This gives $\partial_s\varphi(0,0,0) \in \langle x_0\rangle $, but from the definition one has  $\partial_s\varphi(0,0,0) \in \mathcal{X}$ and consequently
\begin{equation}\label{Eq11}
\partial_s\varphi(0,0,0) =0.
\end{equation}
Hence there exists  a continuous function $\varphi_1: (-\delta,\delta)^3\to \mathcal{X}$ such that
$$
\varphi(\varepsilon,\lambda,s)= s\varphi_1(\varepsilon,\lambda,s) \textnormal{ and } \varphi_1(0,0,0)=0.
$$
Set
\begin{equation}
g(\EE,\lambda,s)\triangleq\left\{ \begin{array}{ll}
 QF\big(\EE, \lambda, s x_0+\varphi(\EE,\lambda,s )\big)/s, \quad s\neq0\\
 Q\partial_xF\big(\EE, \lambda,0)[x_0+ \partial_s \varphi(\varepsilon,\lambda,0)],\quad s=0.\end{array} \right.
\end{equation}
Note that $g$ is continuous and 
$$
g(0,0,0)= Q\partial_xF(0,0,0)x_0=0.
$$ 
Moreover, thanks to $\eqref{zero1}$ one may easily check that
\[ \partial_{\lambda} \varphi(\varepsilon, \lambda,0)=0, \, \forall \vert \varepsilon \vert, \vert \lambda \vert < \delta.\]
Consequently,  the partial derivative $\partial_\lambda g$ exists, it is continuous and satisfies
$$
\partial_\lambda g(0,0,0)=Q\partial_\lambda \partial_xF(0,0,0)x_0.
$$
From the transversality assumption we find
$$
\partial_\lambda g(0,0,0)\neq0.
$$
Hence we can use a weak version of the implicit function theorem, see Appendix A in \cite{CR}, and 
 thus find that the solutions of $g(\EE,\lambda,s)=0$ near the origin are parametrized by a $C^1$ surface  $\psi:(-a,a)^2\to \R$ such that $\lambda=\psi(\EE,s)$ and
$$
g\big(\EE,\psi(\EE,s),s \big)=0, \forall |\EE| , |s|<a, a>0.
$$
Therefore the non-trivial solutions  of the equation $F(\EE,\lambda,x)=0$ near the origin  are parametrized by 
$$
\lambda=\psi(\EE,s), \quad x=s x_0+s\varphi_1\big(\EE, \psi(\EE,s), s\big)\triangleq sx_0+s{\phi}(\EE,s),\quad \forall  |s|, |\EE| <a.
$$ 
This completes the proof of the desired result.
\end{proof}

\subsection{Function spaces I}\label{FS2}
In this section we introduce  the function spaces used below in studying the bifurcation from the unit disc.  For $\alpha\in (0,1)$, we set
 \[X= \Big\{f \in C^{1+\alpha}(\T), \textnormal{s.t.} \, \forall w\in \T,\,f(w)=\sum_{n=0}^{+ \infty} f_n \overline{w}^{n}, f_n \in \R \Big\} \]
and
 \[Y= \Big\{g \in C^{\alpha}(\T), \textnormal{s.t.} \, \forall w\in \T,\, g(w)=\sum_{n=1}^{+ \infty} g_ne_{n}(w), g_n \in \R \Big\} \textnormal{, with } \quad e_n(w)= \textnormal{Im}(w^n). \]
As discussed below, the  $m$-fold symmetric vortex patch solutions are
essentially arising from bifurcations in the more restrictive function
spaces 
  \[X_m= \Big\{ f \in X, \textnormal{s.t.} \, \forall w\in \T, \,f(w)=\sum_{n=1}^{+ \infty} f_{nm-1} \overline{w}^{nm-1}\Big\}\]
  and
 \[Y_m= \Big\{ g \in Y,\textnormal{s.t.} \, \forall w\in \T,\, g(w)=\sum_{n=1}^{+ \infty} g_ne_{nm}(w)\Big\}. \]
Of course, the spaces $X$ and $X_m$ are equipped  with  the strong topology of $C^{1+\alpha}$ whereas $Y$ and $Y_m$ are equipped with   the strong topology of $C^{\alpha}$.

Next we recall the following lemma (see e.g.\ \cite{15,20}).
 \begin{lemma}\label{LemSing}
Let $\mathbb{\Delta}=\big\{ (w, w), w \in \T \big\}$ and let $K: \T \times \T \setminus\mathbb{\Delta} \mapsto \C $ be a measurable function with the following properties. There exists $C>0$ such that,

 \[\vert K(w, \tau) \vert \leq C,\quad  \forall (w,\tau)\notin \mathbb\Delta\]
and that for each $\tau \in \T$, the function $w\in \T\setminus\{\tau\} \mapsto K(w,\tau)$ is differentiable and
\[\Big\vert \partial_w K(w,\tau)  \Big\vert \leq  \frac{C}{\vert w - \tau\vert }.\]
Then the operator
\[T\varphi(w)=\int_{\T} K(w,\tau) \varphi(\tau) d\tau,\]
sends $C^\alpha(\T)$ to $L^\infty(\T)$ for any $\alpha\in (0,1)$  with 
\[ \Vert T\varphi \Vert_{\alpha} \leq C_{\alpha} C\Vert \varphi \Vert_{\infty} \textnormal{, } \varphi \in L^{\infty}(\T),\]
where $C_{\alpha}$ depends only on $\alpha$.

\end{lemma}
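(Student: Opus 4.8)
The plan is to prove directly that $T$ maps $L^\infty(\T)$ into $C^\alpha(\T)$ with $\|T\varphi\|_\alpha \le C_\alpha C \|\varphi\|_\infty$; this is stronger than the stated conclusion and contains it. The bound on the sup norm is immediate: since $|K(w,\tau)|\le C$ for all $(w,\tau)\notin\mathbb{\Delta}$, one has $|T\varphi(w)|\le C\|\varphi\|_\infty\,|\T| = 2\pi C\|\varphi\|_\infty$ for every $w\in\T$. It remains to control the H\"older seminorm $[T\varphi]_\alpha$, and the whole point is to split the integral according to whether $\tau$ lies close to, or far from, the two points at which $T\varphi$ is being compared.

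So fix $w_1,w_2\in\T$, set $\delta=|w_1-w_2|$, and assume $\delta$ is small --- for $\delta$ bounded below by a fixed positive number the estimate is already contained in the sup bound. Write $T\varphi(w_1)-T\varphi(w_2)=\int_\T\big(K(w_1,\tau)-K(w_2,\tau)\big)\varphi(\tau)\,d\tau$ and split $\T$ into the near region $N=\{\tau\in\T:\,|\tau-w_1|\le 2\delta\}$ and the far region $F=\T\setminus N$. On $N$ I would use only the pointwise bound $|K(w_i,\tau)|\le C$ together with the fact that the arc-length of $N$ is $\le C'\delta$ (chord and arc lengths on $\T$ being comparable), which gives $\big|\int_N\cdots\big|\le 2C C'\,\delta\,\|\varphi\|_\infty$, and this is $\le C_\alpha C\|\varphi\|_\infty\,\delta^\alpha$ since $\delta\le(\mathrm{diam}\,\T)^{1-\alpha}\delta^\alpha$. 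On $F$ I would use the derivative bound: for $\tau\in F$, writing $K(w_1,\tau)-K(w_2,\tau)=\int_\gamma\partial_w K(w,\tau)\,dw$ along the shorter arc $\gamma\subset\T$ from $w_2$ to $w_1$ (which avoids $\tau$ because $\tau\in F$ and $\delta$ is small, so the assumed differentiability applies), every $w\in\gamma$ satisfies $|w-\tau|\ge|\tau-w_1|-|w-w_1|\ge\tfrac12|\tau-w_1|$, whence $|\partial_w K(w,\tau)|\le 2C/|\tau-w_1|$ and $|K(w_1,\tau)-K(w_2,\tau)|\le\mathrm{length}(\gamma)\cdot 2C/|\tau-w_1|\le\pi C\,\delta/|\tau-w_1|$. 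Integrating, $\big|\int_F\cdots\big|\le\pi C\,\delta\,\|\varphi\|_\infty\int_{|\tau-w_1|>2\delta}|d\tau|/|\tau-w_1|$, and the radial integral is bounded by $C\log(1/\delta)$, so $\big|\int_F\cdots\big|\le C\,\delta\log(1/\delta)\,\|\varphi\|_\infty\le C_\alpha C\|\varphi\|_\infty\,\delta^\alpha$, using that $\delta^{1-\alpha}\log(1/\delta)$ stays bounded on $(0,\mathrm{diam}\,\T]$.

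Adding the two pieces gives $|T\varphi(w_1)-T\varphi(w_2)|\le C_\alpha C\|\varphi\|_\infty\,\delta^\alpha$, hence $[T\varphi]_\alpha\le C_\alpha C\|\varphi\|_\infty$, and together with the sup bound this is the claimed estimate. There is no deep obstacle here; the only care needed is bookkeeping --- ensuring the arc $\gamma$ stays off the diagonal singularity $\tau$ so the fundamental theorem of calculus is legitimate, keeping track of the (harmless) difference between complex line integration $d\tau$ and arc-length $|d\tau|$ in the inequalities, and checking the two elementary one-variable facts that $\delta\le C_\alpha\delta^\alpha$ and $\delta\log(1/\delta)\le C_\alpha\delta^\alpha$ hold for $\delta$ in a bounded interval.
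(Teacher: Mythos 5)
The paper does not actually prove this lemma --- it is recalled from the references \cite{15,20} --- but your near/far splitting of $\T$ at scale $\delta=|w_1-w_2|$, using the uniform bound on $K$ over the near region of measure $O(\delta)$ and the gradient bound plus the fundamental theorem of calculus along the connecting arc over the far region (yielding the $\delta\log(1/\delta)\lesssim\delta^\alpha$ tail), is exactly the classical argument given in those references, and it is correct; it even delivers the stronger conclusion $T:L^\infty\to C^\alpha$ that the estimate $\Vert T\varphi\Vert_\alpha\leq C_\alpha C\Vert\varphi\Vert_\infty$ implicitly asserts. The one quibble is that with the near region of radius $2\delta$ your claimed bound $|w-\tau|\geq\tfrac12|\tau-w_1|$ on the arc is off by a constant, since the shorter arc from $w_2$ to $w_1$ can stray up to $\tfrac{\pi}{2}\delta$ from $w_1$, giving only $|w-\tau|\geq(1-\tfrac{\pi}{4})|\tau-w_1|$; taking the near region of radius $4\delta$, or simply accepting the smaller constant, repairs this harmlessly.
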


\subsection{Regularity of the functional I}
The main goal of this section is to study the regularity properties
required by Theorem \ref{C-R0} and Theorem \ref{C-R} for the
functional $G$ introduced in \eqref{tildeG_j}.  Denote by $B_r$ the
ball of center $\hbox{Id}$ and radius $r$ in the space $X$ and $B_r^m$
the same ball in the \mbox{space $X_m$}.

\begin{proposition}\label{string11}
There exists  $r \in (0,1)$  such that for  any $\alpha\in (0,1)$   the following holds true.
\begin{enumerate}
\item $G: \R\times\R\times B_r\to Y$ is of class $C^1$. It is at least of class $C^3$. 
\item The restriction $G: \R\times\R\times B_r^m\to Y_m$  is well-defined.
\end{enumerate}
\end{proposition}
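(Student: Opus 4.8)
The plan is to verify the two claims by a careful analysis of the three constituent pieces of the functional $G$ in \eqref{tildeG_j}: the ``rotation'' term $\Omega\,\Phi(w)\overline{\Phi'(w)}\,\overline w$, the singular-kernel term $\overline{\Phi'(w)}\,\overline w\fint_\T \Phi'(\tau)K_0^\EE(|\Phi(w)-\Phi(\tau)|)\,d\tau$, and the outer operation $\mathrm{Im}\{\cdot\}$. Here $\Phi$ ranges over the ball $B_r$, i.e. $\Phi=\mathrm{Id}+f$ with $\|f\|_{C^{1+\alpha}}<r$; choosing $r$ small keeps $\Phi$ a bi-Lipschitz embedding of $\T$ into $\C$ bounded away from self-intersection, so that $|\Phi(w)-\Phi(\tau)|\asymp|w-\tau|$ on $\T\times\T$. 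The rotation term is a polynomial (hence $C^\infty$) map in $(\Omega,\Phi)$ from $\R\times C^{1+\alpha}(\T)$ into $C^\alpha(\T)$, using that $C^\alpha(\T)$ is a Banach algebra and multiplication is smooth; the $\mathrm{Im}\{\cdot\}$ operation is $\R$-linear and bounded. So the whole content is the kernel term.

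For the kernel term I would first split $K_0^\EE$ using the expansion \eqref{Expand1}: write
\[
K_0^\EE(x)=K_0(|\EE|x)+\log(|\EE|/2)=-\log(x/2)\,I_0(|\EE|x)-\log(|\EE|/2)\big(I_0(|\EE|x)-1\big)+\sum_{m\ge 1}\frac{(|\EE|x/2)^{2m}}{(m!)^2}\psi(m+1),
\]
so that $K_0^\EE(x)=-\tfrac12\log(x^2)+h(\EE,x^2)$ where $h$ is an entire function of both $\EE^2$ and $x^2$ with $h(0,\cdot)$ absorbing the Euler-case constant. Then the kernel contribution to $G$ splits as a ``logarithmic part'' and a ``smooth remainder part''. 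For the logarithmic part one is reduced, via \eqref{Flog}, to the operator $\Phi\mapsto\fint_\T \log|\Phi(w)-\Phi(\tau)|\,\Phi'(\tau)\,d\tau$, which is the standard singular operator appearing in the Euler V-state literature \cite{19,20,15}; its $C^1$ (indeed $C^3$, in fact $C^\infty$) dependence on $\Phi\in B_r$ with values in $C^\alpha(\T)$ is exactly what is established there, the key estimate being Lemma \ref{LemSing} applied to the kernel $\partial_w$ of $\log|\Phi(w)-\Phi(\tau)|$ and its $\Phi$-derivatives (each Gâteaux derivative in the direction $f$ produces kernels of the form $\mathrm{(smooth)}\times\frac{f(w)-f(\tau)}{|\Phi(w)-\Phi(\tau)|}$ or $\frac{\partial_w f}{\ }$, whose singularities are integrable and whose $w$-derivatives obey the $|w-\tau|^{-1}$ bound of Lemma \ref{LemSing}). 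For the smooth remainder, $(\EE,\Phi)\mapsto\fint_\T h(\EE,|\Phi(w)-\Phi(\tau)|^2)\Phi'(\tau)\,d\tau$ has a bounded (indeed smooth) kernel and all $w$-derivatives bounded, so it maps into $C^\alpha$ and is smooth jointly in $(\EE,\Phi)$ by differentiating under the integral sign (legitimate by dominated convergence and the entireness of $h$), and the $\EE$-dependence is manifestly $C^\infty$. Assembling the three pieces gives part (1): $G$ is $C^1$, in fact at least $C^3$, on $\R\times\R\times B_r$ with values in $Y$; one should also check the range is really $Y$, i.e. the Fourier expansion has the claimed form with $e_n(w)=\mathrm{Im}(w^n)$ and real coefficients starting at $n=1$ — this follows because $\Phi=\mathrm{Id}+f$ with $f$ real-coefficiented in $\overline w$, the zero mode is killed by $\mathrm{Im}$ and the $\fint$ normalization, and reality/symmetry of the kernel forces real $g_n$.

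For part (2) I would restrict to $\Phi=w+f(w)$ with $f\in X_m$, i.e. $f(w)=\sum_{n\ge1}f_{nm-1}\overline w^{nm-1}$, and check that $G(\EE,\Omega,\Phi)\in Y_m$, i.e. only harmonics $e_{nm}$ survive. This is a symmetry/equivariance statement: the map $w\mapsto e^{2i\pi/m}w$ acts on $X$ and $Y$, the subspaces $X_m,Y_m$ are precisely the $m$-fold symmetric (invariant) elements, and one checks directly from \eqref{tildeG_j} that $G$ is equivariant for this action — replacing $w$ by $e^{2i\pi/m}w$ and $\tau$ by $e^{2i\pi/m}\tau$ (a change of variables leaving $d\tau$ and $\fint_\T$ invariant), using that $\Phi(e^{2i\pi/m}w)=e^{2i\pi/m}\Phi(w)$ for $\Phi\in B_r^m$ and that $|\Phi(w)-\Phi(\tau)|$ is then invariant, shows $G(\EE,\Omega,\Phi)(e^{2i\pi/m}w)=G(\EE,\Omega,\Phi)(w)$. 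Hence $G$ maps $B_r^m$ into the $m$-fold symmetric subspace of $Y$, which is $Y_m$; combined with part (1) the restriction is well-defined (and inherits the same regularity).

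The main obstacle is the logarithmic-kernel term: one must show not merely boundedness but $C^1$ (and higher) Fréchet differentiability of $\Phi\mapsto \fint_\T\log|\Phi(w)-\Phi(\tau)|\,\Phi'(\tau)\,d\tau$ as a map $B_r\to C^\alpha(\T)$, which requires controlling the Hölder norms of all the singular integral operators obtained by differentiating the kernel in $\Phi$, uniformly in the base point. This is precisely the technical heart of the Euler V-state existence proofs; here the extra factor $I_0(|\EE|x)$ (smooth, even, entire in $x$) only improves matters, and the joint continuity in $\EE$ of these derivatives is straightforward since $\EE$ enters analytically through $I_0$. Everything else — the polynomial rotation term, the $\mathrm{Im}$ projection, the smooth Bessel remainder, and the $m$-fold equivariance — is routine.
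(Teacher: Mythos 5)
Your overall strategy coincides with the paper's: the rotation term is handled as a smooth algebra operation on $C^{\alpha}(\T)$, the kernel term is split into the logarithmic singularity (treated exactly as in the Euler V-state papers \cite{15,19,20} via Lemma \ref{LemSing} and \eqref{Flog}) plus a non-singular remainder, and part (2) is the same equivariance computation under $w\mapsto e^{2i\pi/m}w$ with the change of variables $\tau=e^{2i\pi/m}\xi$ that the paper writes out. This is precisely how the paper argues (the regularity details being deferred to subsection \ref{RF2}, where the decomposition \eqref{FF1} plays the role of your splitting of $K_0^{\EE}$).

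There is, however, one concrete inaccuracy in your treatment of the remainder. You claim that $K_0^{\EE}(x)=-\tfrac12\log(x^2)+h(\EE,x^2)$ with $h$ \emph{entire} in $\EE^2$ and $x^2$, and consequently that the $\EE$-dependence is ``manifestly $C^\infty$'' (even analytic). This is false: the additive renormalisation $\log(|\EE|/2)$ cancels only the zeroth-order constant, not the higher terms of $-\log(|\EE|x/2)I_0(|\EE|x)$, so the remainder contains terms of the form $\big(\log(|\EE|/2)+\log x\big)\tfrac{\EE^2x^2}{4}\mathcal{K}_1(\EE^2x^2)$ — see \eqref{FF1}. Thus $h$ is not entire in $x^2$ (it carries $x^2\log x$ factors), and the joint dependence on $\EE$ is governed by $\EE\mapsto\EE^2\log|\EE|$, which is only $C^1$ at $\EE=0$; this is exactly the point the paper makes in Proposition \ref{biendef} and Remark \ref{rmkk1}, where the higher ($C^3$) regularity is asserted only in the variables $(\Omega,f)$, the ``defect of regularity'' coming from $\EE$. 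This slip does not invalidate the proof of the proposition as stated — the $x^2\log x$ kernels are bounded with $w$-derivative dominated by $C/|w-\tau|$, so Lemma \ref{LemSing} still applies, and $\EE^2\log|\EE|$ is $C^1$, which is all that is claimed jointly in $\EE$ — but the assertion of analytic dependence on $\EE$ should be retracted: it is precisely the failure of higher smoothness in $\EE$ that the later imperfect-bifurcation analysis (Theorem \ref{thmbreak}) has to work around.
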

\begin{proof}
The proofs are classical and  can be performed in a similar way to those of  \cite{15,20}, using Lemma \ref{LemSing} in particular. Some  details will be given later in the subsection \ref{RF2}. Thus we only sketch the proof of the symmetry given in point $(2)$. 
The spaces that used are described in subsection \ref{FS2}. Recall that 
\[ B_r^m=\lbrace \Phi \in X_m , \Vert \Phi-\textnormal{Id}\Vert_{C^{1+\alpha}(\T)} \leq r \rbrace,\quad  \Phi(w)=w+\sum_{n=1}^{+ \infty} f_{nm-1} \overline{w}^{nm-1}\]
 and
 \[G(\varepsilon,\Omega,\Phi)=\textnormal{Im} \bigg\{  \Big(\Omega \Phi(w)- I(\EE,\Phi)(w) \Big) \overline{\Phi'(w)}\overline{w} \bigg\}\]

with
\[ I(\varepsilon,\Phi)(w)= \fint_{\T} \Phi'(\tau) K_0\Big(|\varepsilon| \vert \Phi(w)- \Phi(\tau)\vert \Big)d\tau. \]
We begin by checking that $G(\varepsilon,\Omega,f)$ belongs to $Y_m$. It is enough for that purpose to prove that

\[G(\varepsilon,\Omega,\Phi)\bigg(e^{\frac{2i \pi}{m}} w\bigg)=G(\varepsilon,\Omega,\Phi)( w), \; \forall w \in \T. \]
Note that 
\begin{equation}\label{derivphisymm}
\Phi\bigg(e^{\frac{2i \pi}{m}} w\bigg)= e^{\frac{2i \pi}{m}}\Phi( w),\quad \Phi'\bigg(e^{\frac{2i \pi}{m}} w\bigg)=\Phi'( w)
\end{equation}
and thus  the property is obvious for the first term $\textnormal{Im} \left\lbrace  \Omega \Phi(w)\overline{\Phi'(w)} \overline{w} \right\rbrace$. For the last term of $G$, it is enough to check the identity,
\[  I(\varepsilon,\Phi)\bigg(e^{\frac{2i \pi}{m}} w\bigg)=e^{\frac{2i \pi}{m}} I(\varepsilon,\Phi)( w), \, \forall w \in \T . \]

This follows simply by making the change of variable $\tau=e^{\frac{2i \pi}{m}} \xi$:
\begin{align*}
 I(\varepsilon,\Phi)\bigg(e^{\frac{2i \pi}{m}} w\bigg) &=\fint_{\T} e^{\frac{2i \pi}{m}}  \Phi'\Big(e^{\frac{2i \pi}{m}} \tau\Big) K_0 \Big(| \varepsilon| \Big|\Phi\Big( e^{\frac{2 i \pi}{m}} w\Big)- \Phi\Big(e^{\frac{2i \pi}{m}} \tau\Big) \Big| \Big)d\tau  \\
  &=e^{\frac{2i \pi}{m}} \fint_{\T} \Phi'(\tau)  K_0 \Big(|\varepsilon| \vert \Phi(w)- \Phi(\tau)\vert \Big)d\tau \\
  &=e^{\frac{2i \pi}{m}} I(\varepsilon, \Phi)( w).
\end{align*}
This ends the proof.
 \end{proof}

\subsection{Spectral study}
In this section we compute the linearized operator at the
trivial solution of the functional $G$ introduced in
\eqref{tildeG_j}. We prove that it acts as a Fourier multiplier
with symbol related to modified Bessel functions. This allows us to
describe the full range of $\Omega$ corresponding to non-trivial
kernels. Finally, we check that for these values of $\Omega$ all the
assumptions of Crandall-Rabinowitz's theorem are satisfied.

\subsubsection{Structure of the linearized operator}
We prove the following result. 
\begin{proposition}\label{propLin}
Let  $ \displaystyle{h: w\mapsto \sum_{n=0}^{+ \infty} a_n \overline{w}^n\in X}$, then  
\[D_fG(\varepsilon, \Omega, \textnormal{Id})(h)(w)=\sum_{n=0}^{+ \infty} a_n(n+1) \Big(\Omega_{n+1}(\EE) -\Omega\Big) e_{n+1}(w), \textnormal{ with }e_n(w)=\textnormal{Im}(w^n) \]
and
\[ \Omega_m(\varepsilon)=I_1(| \varepsilon |)K_1(| \varepsilon |)-I_m(| \varepsilon |)K_m(| \varepsilon |).\]
\end{proposition}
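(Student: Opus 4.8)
The plan is to compute the Gâteaux derivative $D_fG(\varepsilon,\Omega,\textnormal{Id})(h)=\frac{d}{dt}\big|_{t=0}G\big(\varepsilon,\Omega,\textnormal{Id}+th\big)$ mode by mode (this is the Fréchet derivative since $G$ is $C^1$ by Proposition \ref{string11}). By linearity of the derivative it suffices to take $h(w)=\overline w^{\,n}=w^{-n}$ for a single $n\ge 0$ and to prove that the answer is $(n+1)\big(\Omega_{n+1}(\varepsilon)-\Omega\big)e_{n+1}(w)$; the stated series follows by superposition. (For $n=0$ the perturbation is a pure translation, $h'\equiv 0$, and everything below collapses to the rotation term, consistent with $\Omega_1(\varepsilon)=I_1K_1-I_1K_1=0$.) Throughout I use $\overline w=w^{-1}$, $\textnormal{Im}(\overline w^{\,k})=-e_k(w)$ on $\T$, and the elementary fact $\fint_\T\tau^k\,d\tau=\delta_{k,-1}$.

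Write $G=\textnormal{Im}\{\mathcal R(\Phi)-N(\Phi)\}$ with $\mathcal R(\Phi)(w)=\Omega\,\Phi(w)\overline{\Phi'(w)}\overline w$ and $N(\Phi)(w)=\overline{\Phi'(w)}\,\overline w\fint_\T\Phi'(\tau)K_0^\varepsilon\big(|\Phi(w)-\Phi(\tau)|\big)\,d\tau$. The rotation term is immediate: $D_f\mathcal R(\textnormal{Id})(h)=\Omega\big(h(w)\overline w+\overline{h'(w)}\big)=\Omega\big(\overline w^{\,n+1}-n\,w^{n+1}\big)$, so $\textnormal{Im}\,D_f\mathcal R(\textnormal{Id})(h)=-\Omega(n+1)e_{n+1}(w)$. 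For $N$, the product and chain rules give $D_fN(\textnormal{Id})(h)=A+B+C$, where $A=\overline{h'(w)}\,\overline w\fint_\T K_0^\varepsilon(|w-\tau|)\,d\tau$ comes from differentiating $\overline{\Phi'(w)}$, $B=\overline w\fint_\T h'(\tau)K_0^\varepsilon(|w-\tau|)\,d\tau$ from differentiating $\Phi'(\tau)$, and $C=\overline w\fint_\T\frac{d}{dt}\big|_0 K_0^\varepsilon\big(|\Phi_t(w)-\Phi_t(\tau)|\big)\,d\tau$ from differentiating the kernel. Since $K_0'=-K_1$ by \eqref{Expand1} and $\frac{d}{dt}\big|_0|\Phi_t(w)-\Phi_t(\tau)|=|w-\tau|\,\textnormal{Re}\big(\tfrac{h(w)-h(\tau)}{w-\tau}\big)$ (factor $\Phi_t(w)-\Phi_t(\tau)=(w-\tau)(1+t\tfrac{h(w)-h(\tau)}{w-\tau})$), the kernel derivative equals $-|\varepsilon|\,|w-\tau|K_1(|\varepsilon||w-\tau|)\,\textnormal{Re}\big(\tfrac{h(w)-h(\tau)}{w-\tau}\big)$.

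The terms $A$ and $B$ follow from the single input that the Fourier coefficients on $\T$ of $K_0(|\varepsilon||w-\tau|)$, viewed as a function of $\phi=\arg(\tau/w)$, are $\beta_k:=I_{|k|}(|\varepsilon|)K_{|k|}(|\varepsilon|)$; this is exactly Nicholson's identity \eqref{Nichols} after the substitution $\phi=\pi-2\theta$. Combined with $\fint_\T\tau^k\,d\tau=\delta_{k,-1}$ (and the harmless constant $\log(|\varepsilon|/2)$ in $K_0^\varepsilon$ integrating to zero against $1$, and against $h'(\tau)$ for $n\ge1$) one isolates $\fint_\T K_0^\varepsilon(|w-\tau|)\,d\tau=I_1K_1\,w$, hence $A=I_1K_1\,\overline{h'(w)}=-n\,I_1K_1\,w^{n+1}$ and $B=-n\,I_nK_n\,\overline w^{\,n+1}$, so $\textnormal{Im}(A+B)=\big(n\,I_nK_n-n\,I_1K_1\big)e_{n+1}$. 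For $C$ I would use $\frac{w^{-n}-\tau^{-n}}{w-\tau}=-\sum_{\ell=1}^n w^{\ell-n-1}\tau^{-\ell}$, hence $\textnormal{Re}\big(\tfrac{h(w)-h(\tau)}{w-\tau}\big)=-\tfrac12\sum_{\ell=1}^n(w^{\ell-n-1}\tau^{-\ell}+w^{n+1-\ell}\tau^{\ell})$ on $\T$, and integrate term by term: the remaining kernel $m(\phi):=|w-\tau|K_1(|\varepsilon||w-\tau|)$ depends only on $\phi$, and with $\fint_\T m(\phi)\tau^{j}\,d\tau=w^{\,j+1}\mu_{j+1}$ ($\mu_k=\mu_{-k}$ the Fourier coefficients of $m$) one collects $\textnormal{Im}\,C=\tfrac{|\varepsilon|}{2}(\mu_n+\mu_{n+1}-\mu_0-\mu_1)e_{n+1}$. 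To identify the $\mu_k$: since $K_0'=-K_1$, $F(\phi)=K_0(|\varepsilon||w-\tau|)$ satisfies $|\varepsilon|\sin\phi\,m(\phi)=-2(1-\cos\phi)F'(\phi)$, and comparing Fourier coefficients (with $\widehat F_k=\beta_k$) gives $\mu_{j-1}-\mu_{j+1}=\tfrac{2}{|\varepsilon|}\big(2j\beta_j-(j-1)\beta_{j-1}-(j+1)\beta_{j+1}\big)$. Writing the right side as $\tfrac2{|\varepsilon|}(\delta_j-\delta_{j+1})$ with $\delta_j=j\beta_j-(j-1)\beta_{j-1}$ and telescoping — legitimate because $j\beta_j=jI_j(|\varepsilon|)K_j(|\varepsilon|)\to\tfrac12$ and the $\mu_k$ decay — yields $\mu_0+\mu_1=\tfrac{2}{|\varepsilon|}I_1K_1$ and $\mu_n+\mu_{n+1}=\tfrac{2}{|\varepsilon|}\big((n+1)I_{n+1}K_{n+1}-nI_nK_n\big)$, so $\textnormal{Im}\,C=\big((n+1)I_{n+1}K_{n+1}-nI_nK_n-I_1K_1\big)e_{n+1}$.

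Adding up, $\textnormal{Im}(A+B+C)=-(n+1)\big(I_1K_1-I_{n+1}K_{n+1}\big)e_{n+1}=-(n+1)\Omega_{n+1}(\varepsilon)e_{n+1}$, and therefore $D_fG(\varepsilon,\Omega,\textnormal{Id})(\overline w^{\,n})=\textnormal{Im}\,D_f\mathcal R(\textnormal{Id})(\overline w^{\,n})-\textnormal{Im}(A+B+C)=\big((n+1)\Omega_{n+1}(\varepsilon)-(n+1)\Omega\big)e_{n+1}$, which is the claim; summing against the coefficients $a_n$ gives the Proposition. The main obstacle is the computation of $C$: one has to evaluate the Fourier coefficients of the $K_1$-weighted kernel $|w-\tau|K_1(|\varepsilon||w-\tau|)$ in terms of the products $I_jK_j$, and because the logarithmic singularity of $K_1$ at the origin makes $m$ only $C^1$ (so that $F'$ is a principal-value object) the passage to Fourier series and the telescoping require the decay/convergence facts above; keeping track of the several signs in $A$, $B$, $C$ is the other point that must be handled carefully. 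Every other step is the routine differentiation and Bessel-function bookkeeping indicated.
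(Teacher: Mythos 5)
Your proposal is correct, and its skeleton coincides with the paper's: the same three-way split of $D_fG(\EE,\Omega,\textnormal{Id})h$ into the rotation term, the two terms coming from differentiating $\overline{\Phi'(w)}$ and $\Phi'(\tau)$, and the term coming from differentiating the kernel, with Nicholson's formula \eqref{Nichols} supplying the Fourier coefficients $I_{|k|}(|\EE|)K_{|k|}(|\EE|)$ of $K_0(|\EE||w-\tau|)$ for the first two integral terms; your sign bookkeeping and the final sum $\textnormal{Im}(A+B+C)=-(n+1)\Omega_{n+1}(\EE)\,e_{n+1}$ check out. The one genuine divergence is in the kernel-derivative term (your $C$, the paper's $\mathcal{L}_2$). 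The paper integrates by parts in the angular variable, which converts the $K_0'=-K_1$ weight back into $K_0$ and allows a second application of \eqref{Nichols}, giving $d_n=-I_1K_1-nI_nK_n+(n+1)I_{n+1}K_{n+1}$ directly. You instead introduce the Fourier coefficients $\mu_k$ of $|w-\tau|K_1(|\EE||w-\tau|)$, derive the three-term relation $\mu_{j-1}-\mu_{j+1}=\tfrac{2}{|\EE|}\bigl(2j\beta_j-(j-1)\beta_{j-1}-(j+1)\beta_{j+1}\bigr)$ from the pointwise identity $|\EE|\sin\phi\,m(\phi)=-2(1-\cos\phi)F'(\phi)$ (an integration by parts in disguise), and telescope; I verified that this yields $\mu_n+\mu_{n+1}=\tfrac{2}{|\EE|}\bigl((n+1)I_{n+1}K_{n+1}-nI_nK_n\bigr)$ and hence the same value for $\textnormal{Im}\,C$. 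The trade-off: the paper's route needs only \eqref{Nichols} and an elementary integration by parts, whereas yours additionally requires $\delta_j=jI_jK_j-(j-1)I_{j-1}K_{j-1}\to0$, i.e.\ the large-order asymptotic $I_jK_j\sim\tfrac{1}{2j}$ (true, but stronger than the bound \eqref{decay1} recorded in the paper), together with Riemann--Lebesgue decay of the $\mu_k$ to justify the telescoping; in exchange you avoid the explicit trigonometric manipulation of the integrand. Both arguments are complete.
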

\begin{proof}
Without loss of generality, we may assume that $\EE>0$. Now, for given $h\in X$, one may deduce from straightforward computations that
 \begin{equation}
D_fG( \varepsilon, \Omega, \hbox{Id})(h)(w)= \mathcal{L}_0(h)(w)+\mathcal{L}_1(h)(w)+\mathcal{L}_2(h)(w)
\end{equation}
with
\[ \mathcal{L}_0(h)(w) = \Omega\,\textnormal{Im} \left\lbrace  h(w) \overline{w}+ \overline{h'(w)} \right \rbrace,\]
\begin{eqnarray}
\nonumber \mathcal{L}_1(h)(w) =\textnormal{Im} \Bigg\{- \overline{h'(w)} \overline{w} \fint_{\T}  K_0 \big(\varepsilon \vert w- \tau\vert\big) d\tau - \overline{w} \fint_{\T} h'(\tau) K_0 \big(\varepsilon  \vert w- \tau\vert \big)d\tau\Bigg\}
\end{eqnarray}
and
\[ \mathcal{L}_2(h)(w) =\EE  \,\textnormal{Im} \Bigg\{- \overline{w} \fint_{\T}  \frac{ \textnormal{Re}\Big(\big( h(w)-h(\tau)\big) \big( \overline{w}- \overline{\tau}\big) \Big)}{\vert w- \tau \vert} K_0' \big(\varepsilon \vert w- \tau \vert \big) d \tau \Bigg\}.\]

We begin with the easier term $\mathcal{L}_0(h)(w)$ whose computation is straightforward: 
\begin{equation}\label{Ni3}\mathcal{L}_0(h)(w)=-\sum_{n=0}^{+ \infty} a_n \Omega (n+1)e_{n+1}.
\end{equation}
For  $\mathcal{L}_1(h)$ we first use the change of variable $\tau\mapsto w \tau$
$$
 \mathcal{L}_1(h)(w) =\textnormal{Im} \left\lbrace- \overline{h'(w)} \fint_{\T}  K_0 \big(\varepsilon \vert 1- \tau\vert\big) d\tau -  \fint_{\T} h'(\tau w) K_0 \big(\varepsilon  \vert 1- \tau\vert \big)d\tau\right \rbrace
$$
which implies that
$$
\mathcal{L}_1(h)(w)=-\sum_{n=1}^{+ \infty} na_n  \Bigg[\fint_{\T}  K_0 \big(\varepsilon  \vert 1- \tau\vert \big)(\overline{\tau}^{n+1}-1) d\tau \Bigg] e_{n+1}. 
$$
We focus on the integral term involving in $\mathcal{L}_1(h)(w)$. By symmetry arguments we obtain
\begin{eqnarray*}\fint_{\T}  K_0 \big(\varepsilon \vert 1- \tau\vert\big)(\overline{\tau}^{n+1}-1) d\tau&=& \frac{1}{2\pi}  \int_0^{ 2\pi} K_0 \big(2 \varepsilon   \sin({\theta}/{2})\big)\big(\cos(n\theta)-\cos\theta\big)d\theta\\
&=& \frac{2}{\pi}  \int_0^{ \frac\pi2} K_0 \big(2\varepsilon  \sin{\theta}\big)\big(\cos(2n\theta)-\cos(2\theta)\big)d\theta\\
&=& \frac{2}{\pi}  \int_0^{ \frac\pi2} K_0 \big(2\varepsilon \cos{\theta}\big)\big((-1)^n\cos(2n\theta)+\cos(2\theta)\big)d\theta.\\
\end{eqnarray*}
Using \eqref{Nichols} we deduce that
$$
\fint_{\T}  K_0 \big( \varepsilon  \vert 1- \tau\vert\big)(\overline{\tau}^{n+1}-1) d\tau=I_n(\EE ) K_n(\EE )-I_1(\EE )K_1( \EE ).
$$
Therefore
\begin{equation}\label{Nie1}
\mathcal{L}_1(h)(w)=\sum_{n=1}^{+ \infty} na_n \Big(I_1( \EE )K_1( \EE )-I_n( \EE ) K_n(  \EE )\Big) e_{n+1}.
\end{equation}

For the computation of  $\mathcal{L}_2(h)(w)$, we  write
\[\mathcal{L}_2(h)(w)= - \sum_{n=1}^{+ \infty} \frac{  \varepsilon  a_n}{2}  \Bigg( \fint_{\T}  \bigg[\frac{ (\tau^n-1)(\tau-1)}{\vert 1 - \tau \vert}-\frac{ (\overline{\tau}^n-1)(\overline{\tau}-1)}{\vert 1 - \tau \vert}\bigg]K_0' \big(\varepsilon \vert 1- \tau \vert \big) d \tau \Bigg) e_{n+1}.\]

Now we compute   the  following integral term  which is more delicate
$$
d_n\triangleq\frac{ \EE }{ 2}\fint_{\T}  \bigg[\frac{ (\tau^n-1)(\tau-1)}{\vert 1 - \tau \vert}-\frac{ (\overline{\tau}^n-1)(\overline{\tau}-1)}{\vert 1 - \tau \vert}\bigg]K_0' \big( \varepsilon  \vert 1- \tau \vert \big) d \tau. 
$$
First we use the following trigonometric identity: for $\tau=e^{i\theta}, \theta\in[0,2\pi]$, one has
$$
 \textnormal{Re}\Bigg\{\bigg(\frac{ (\tau^n-1)(\tau-1)}{\vert 1 - \tau \vert}-\frac{ (\overline{\tau}^n-1)(\overline{\tau}-1)}{\vert 1 - \tau \vert}\bigg)\frac{d\tau}{2i\pi}\Bigg\}=\frac1\pi\cos(\theta/2)\Big(\sin\theta+\sin(n\theta)-\sin\big((n+1)\theta\big)\Big) d\theta.
$$ 
Thus  integration by parts yields
\begin{eqnarray*}
 d_n&=&\frac{1}{2\pi}\int_0^{2\pi}\big(\EE  \cos(\theta/2)\big)K_0^\prime\big(2\EE \sin(\theta/2)\big)\Big(\sin\theta+\sin(n\theta)-\sin\big((n+1)\theta\big)\Big)d\theta\\
 &=&-\frac{1}{2\pi}\int_0^{2\pi}K_0\big(2\EE \sin(\theta/2)\big)\Big(\cos\theta+n\cos(n\theta)-(n+1)\cos\big((n+1)\theta\big)\Big)d\theta.
\end{eqnarray*}

Performing a change of variables and invoking symmetry arguments  imply
\begin{eqnarray*}
 d_n &=&-\frac{2}{\pi}\int_0^{\frac\pi2}K_0\big(2\EE \sin(\theta)\big)\Big(\cos(2\theta)+n\cos(2n\theta)-(n+1)\cos\big(2(n+1)\theta\big)\Big)d\theta\\
 &=&\frac{2}{\pi}\int_0^{\frac\pi2}K_0\big(2\EE \cos(\theta)\big)\Big(\cos(2\theta)-n(-1)^n\cos(2n\theta)-(n+1)(-1)^n\cos\big(2(n+1)\theta\big)\Big)d\theta.
\end{eqnarray*}
Using \eqref{Nichols} we obtain
\begin{equation}
d_n=-I_1(\EE ) K_1(\EE )-nI_n(\EE ) K_n(\EE )+(n+1)I_{n+1}(\EE )K_{n+1}(\EE ).
\end{equation}
Combined with \eqref{Nie1} we find that
$$
\mathcal{L}_1(h)(w)+\mathcal{L}_2(h)(w)=  \sum_{n=0}^{+ \infty} (n+1) a_{n}\Big( I_1(\EE ) K_1(\EE )-I_{n+1}(\EE )K_{n+1}(\EE )\Big)e_{n+1}.
$$
Putting together this identity with \eqref{Ni3} gives the desired result.

\end{proof}
\subsubsection{Bifurcation assumptions}

Next we check the assumptions on the linearized operator
required by Theorem \ref{C-R0} and Theorem \ref{C-R}.
For this purpose, we introduce the  countable dispersion set
\begin{equation}\label{Disp}
 \mathbb{S}=\Big\{ \Omega_m(\EE)\triangleq I_1(|\varepsilon| )K_1(|\varepsilon |)-I_{m}(| \varepsilon |) K_m(| \varepsilon | ),\quad m\geq1\Big\}.
 \end{equation}
The main result reads as follows.
\begin{proposition}\label{assumptions}
Let $\EE \in \R $ be a fixed  real number and $G$ be the functional defined in \eqref{tildeG_j}; note that some of its properties are detailed in Proposition \ref{string11}. Then the following assertions hold.
\begin{enumerate}
\item The sequence $m\mapsto \Omega_m(\EE)$ is strictly increasing and converges to $I_1( |\EE | ) K_1( | \EE |)$.
\item The kernel of $D_fG\big(\varepsilon, \Omega,\textnormal{Id}\big)$ is non-trivial if and only if $\Omega=\Omega_m(\EE) \in \mathbb{S}$. In this case, it is one-dimensional and generated by
\[v_m:w\in\T\mapsto \overline{w}^{m-1}.\]
\item The range of $D_f G\big(\varepsilon,\Omega_m(\EE),\textnormal{Id}\big)$ is closed in Y and is of co-dimension one. It is given by
 \[\mathcal{R}(D_fG\big(\varepsilon, \Omega_m(\EE),\textnormal{Id})\big)=\Big\{ g \in C^{\alpha}(\T), g=\sum_{\underset{n=1}{n \neq m}}^{+ \infty} g_ne_{n}, \,g_n \in \R \Big\}.\]
\item Transversality assumption:
\[ \partial_{\Omega} D_fG(\varepsilon, \Omega_m,\textnormal{Id})v_m \notin \mathcal{R}(D_fG(\varepsilon,\Omega_m,Id)).\]

\end{enumerate}
\end{proposition}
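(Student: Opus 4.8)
The plan is to verify the four assertions of Proposition~\ref{assumptions} directly from the explicit Fourier-multiplier representation of the linearized operator established in Proposition~\ref{propLin}, namely
\[
D_fG(\varepsilon,\Omega,\textnormal{Id})(h)(w)=\sum_{n\geq0}a_n(n+1)\bigl(\Omega_{n+1}(\varepsilon)-\Omega\bigr)e_{n+1}(w),
\]
with $\Omega_m(\varepsilon)=I_1(|\varepsilon|)K_1(|\varepsilon|)-I_m(|\varepsilon|)K_m(|\varepsilon|)$. Since the multiplier is diagonal in the basis $\{e_{n+1}\}$, parts (2), (3) and (4) become elementary once part (1) is known. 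For (1) I would show that $m\mapsto I_m(x)K_m(x)$ is strictly decreasing for $x>0$ (equivalently $\Omega_m$ is strictly increasing), and that $I_m(x)K_m(x)\to0$ as $m\to\infty$, so that $\Omega_m(\varepsilon)\uparrow I_1(|\varepsilon|)K_1(|\varepsilon|)$; the case $\varepsilon=0$ reduces to $\Omega_m(0)=\tfrac12-\tfrac1{2m}$, already known, so one only needs $\varepsilon\neq0$. For (2): the $n$th Fourier mode of $D_fG(\varepsilon,\Omega,\textnormal{Id})h$ vanishes precisely when $a_{n-1}(\Omega_n(\varepsilon)-\Omega)=0$; by the strict monotonicity in (1) the equality $\Omega=\Omega_n(\varepsilon)$ can hold for at most one $n$, and then $a_k=0$ for all $k\neq n-1$, so the kernel is spanned by $v_n:w\mapsto\overline w^{n-1}$ (relabelling $n$ as $m$ gives the stated form). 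For (3): again by diagonality the range is exactly $\{g=\sum_{n\geq1}g_ne_n:g_m=0\}$; closedness and codimension one are then immediate, the missing direction being $e_m$. For (4): $\partial_\Omega D_fG(\varepsilon,\Omega_m,\textnormal{Id})v_m=-m\,e_m$, which is a nonzero multiple of the generator of the complement of the range, so it does not lie in $\mathcal R(D_fG(\varepsilon,\Omega_m,\textnormal{Id}))$.

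For the monotonicity and limit in part (1), the cleanest route is via Nicholson's integral representation~\eqref{Nichols},
\[
I_n(x)K_n(x)=\frac{2(-1)^n}{\pi}\int_0^{\pi/2}K_0(2x\cos\theta)\cos(2n\theta)\,d\theta=\frac{2}{\pi}\int_0^{\pi/2}K_0(2x\sin\theta)\cos(2n\theta)\,d\theta,
\]
after the substitution $\theta\mapsto\tfrac\pi2-\theta$. Writing $\Omega_m(\varepsilon)-\Omega_{m-1}(\varepsilon)=I_{m-1}(|\varepsilon|)K_{m-1}(|\varepsilon|)-I_m(|\varepsilon|)K_m(|\varepsilon|)$ and using the identity $\cos(2(m-1)\theta)-\cos(2m\theta)=2\sin\theta\sin((2m-1)\theta)$, one gets
\[
\Omega_m(\varepsilon)-\Omega_{m-1}(\varepsilon)=\frac{4}{\pi}\int_0^{\pi/2}K_0(2|\varepsilon|\sin\theta)\sin\theta\,\sin\bigl((2m-1)\theta\bigr)\,d\theta,
\]
and it remains to see that this integral is strictly positive. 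One clean way is an integration-by-parts argument exploiting the monotonicity of $\theta\mapsto K_0(2|\varepsilon|\sin\theta)\sin\theta$ on $(0,\pi/2)$, or a splitting of $(0,\pi/2)$ into the subintervals where $\sin((2m-1)\theta)$ has constant sign together with the decay of the weight; either way positivity follows because $K_0$ is positive and decreasing on $(0,\infty)$. The limit $I_m(x)K_m(x)\to0$ follows from the integral representation since $\cos(2m\theta)$ oscillates and $K_0(2x\sin\theta)$ is integrable on $(0,\pi/2)$ (Riemann–Lebesgue), or alternatively from the classical asymptotics $I_m(x)\sim (x/2)^m/m!$ and $K_m(x)\sim\tfrac12(m-1)!\,(2/x)^m$, giving $I_m(x)K_m(x)\sim\tfrac1{2m}\to0$.

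The main obstacle is the strict positivity of the difference $\Omega_m(\varepsilon)-\Omega_{m-1}(\varepsilon)$ for \emph{all} real $\varepsilon$ and all $m\geq2$: the other three parts are purely formal consequences of the diagonal structure. For small $\varepsilon$ one can expand $I_m(|\varepsilon|)K_m(|\varepsilon|)$ in powers of $|\varepsilon|$ using~\eqref{Expand1} and the series for $I_m,K_m$ recalled in Subsection~\ref{Bessel Function}, recovering $\Omega_m(\varepsilon)=\tfrac12-\tfrac1{2m}+O(\varepsilon^2)$ with the leading correction having the right sign, but to cover arbitrary $\varepsilon$ a monotonicity argument valid uniformly in $x=|\varepsilon|>0$ is needed; the oscillatory-integral representation above, combined with a careful sign analysis (e.g.\ pairing consecutive sign lobes of $\sin((2m-1)\theta)$ and using that the positive lobes carry larger weight because $K_0(2x\sin\theta)\sin\theta$ is increasing near $0$), is the route I would pursue, and it is the one genuinely non-routine point of the proof.
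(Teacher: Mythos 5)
Your reduction of parts (2)--(4) to the diagonal structure of Proposition \ref{propLin} is the right starting point, but two steps you dismiss as routine are precisely where the paper has to work, and as written they are gaps. The more serious one is in part (3): the claim that the range ``is exactly $\{g:\ g_m=0\}$'' and that closedness and codimension one are ``immediate'' by diagonality is not justified. Diagonality only gives you the \emph{formal} Fourier coefficients of a preimage, $h_n=g_{n+1}/\big((n+1)(\Omega_{n+1}(\EE)-\Omega_m(\EE))\big)$; you must still prove that this $h$ belongs to $X\subset C^{1+\alpha}(\T)$, and dividing Fourier coefficients by a sequence bounded away from zero does not preserve H\"older classes. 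The paper does this by writing $h'$ as $-\chi/(I_mK_m)$ plus convolutions of $\chi=\Pi_+(2ig)-2ig$ with explicit kernels $H_1,H_2,H_3$, and it needs the quantitative decay $I_n(|\EE|)K_n(|\EE|)\le C\ln(n+1)/n$ of \eqref{decay1} to place $H_1$ in $L^2\subset L^1$, together with the boundedness of the Szeg\"o projection on $C^\alpha$. Your Riemann--Lebesgue argument for $I_mK_m\to0$ suffices for part (1) but not for this step, which is exactly why the paper proves the rate.

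Second, in part (1) the identity
\[
\Omega_m(\EE)-\Omega_{m-1}(\EE)=\frac{4}{\pi}\int_0^{\pi/2}K_0\big(2|\EE|\sin\theta\big)\sin\theta\,\sin\big((2m-1)\theta\big)\,d\theta
\]
is correct, but the positivity arguments you sketch do not go through: the weight $\theta\mapsto K_0(2|\EE|\sin\theta)\sin\theta$ is \emph{not} monotone on $(0,\pi/2)$ for general $\EE$ (its derivative in $s=\sin\theta$ is $K_0(2|\EE|s)-2|\EE|s\,K_1(2|\EE|s)$, which is positive for small argument and negative once $2|\EE|s$ exceeds a threshold of order one), so neither the integration by parts nor the lobe-pairing argument applies uniformly in $\EE$. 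The paper avoids this entirely by invoking Segura's Tur\'an-type inequality \eqref{product}, which yields $I_mK_m<I_{m-1}K_{m-1}$ for all $x>0$ in one line. If you wish to keep the oscillatory-integral route you must supply a genuinely different positivity proof; as it stands, the point you yourself flag as the only non-routine one is left open.
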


\begin{proof}
$(1)$
 We use  the following inequality (see \cite{Segura}). For $\nu \geq 0$ and $x>0$
\begin{equation}\label{product}
\frac{I_{\nu+ \frac{1}{2}}(x)}{I_{\nu- \frac{1}{2}}(x)} < \frac{x}{ \nu + \sqrt{\nu^2 + x^2}} \leq \frac{K_{\nu - \frac{1}{2}}(x)}{K_{\nu+ \frac{1}{2}}(x)}.
\end{equation}
Thus using the positivity of $I_n$ and $K_n$ mentioned in
\eqref{posit1}, we find that the sequence $n\mapsto I_n(|\EE | ) K_n(|\EE |)$
is strictly decreasing. It remains to check that
$\displaystyle{\lim_{n\to\infty} I_n(|\EE |) K_n(| \EE | )=0}$. For this,
we establish a precise result on the convergence rate used below:
there exists $C>0$ such that for any real number
$\EE$ ,
\begin{equation}\label{decay1}
\forall n\in\N^\star ,\quad 0< I_n(|\EE|  ) K_n( | \EE | )\le C \frac{\ln(n+1)}{n}.
\end{equation}
Indeed, using integration by parts in \eqref{Nichols} we find
\[
I_n(|\EE | ) K_n(| \EE | )=-\frac{2(-1)^n\EE }{\pi n}\int_0^{\frac\pi2}\sin\theta\, K_1\big(2 | \EE | \cos\theta\big)\sin(2n\theta) d\theta.
\]
Thus
\[
0<I_n(| \EE | ) K_n(| \EE | )\le \frac{2 |\EE| }{\pi n}\int_0^{\frac\pi2}\sin\theta\, K_1\big(2| \EE | \cos\theta\big)|\sin(2n\theta)| d\theta.
\]
On the other hand using \eqref{Form34} we deduce by the change of variable  $\theta=\cosh t$ that for $x>0$
\begin{align*}
K_1(x)&=\int_{1}^{+\infty} e^{-x\theta}\frac{\theta}{\sqrt{\theta^2+1}}d\theta\\
&=\int_{1}^{2} e^{-x\theta}\frac{\theta}{\sqrt{\theta^2+1}}d\theta+\int_{2}^{+\infty} e^{-x\theta}\frac{\theta}{\sqrt{\theta^2+1}}d\theta\\
&\le e^{-x}+\int_{2}^{+\infty} e^{-x\theta}d\theta\\
&\le  e^{-x}+\frac{1}{x}e^{-2x}.
\end{align*}
Consequently there exists $C>0$ such that for any $x>0$,
$$
K_1(x)\le \frac{C}{x}
$$
which implies after straightforward computations related to Dirichlet kernel,
\begin{align*}
\forall n\in \N^\star,\quad I_n(\EE ) K_n(\EE )&\le \frac{C}{n}\int_0^{\frac\pi2}\frac{|\sin(2n\theta)|}{\cos\theta} d\theta\\
&\le  \frac{C}{n}\int_0^{\frac\pi2}\frac{|\sin(2n\theta)|}{\sin\theta} d\theta\\
&\le  C\frac{\ln(n+1)}{n}.
\end{align*}
$(2)$ The result follows from the structure of the linearized operator stated in Proposition \ref{propLin} and the strict monotonicity of the eigenvalues $(\Omega_m(\EE))_{m\geq1}$.

$(3)$ 
We want to prove that for any $m \geq 1$ the range of $D_fG(\varepsilon, \Omega_m(\EE),\textnormal{Id})$ coincides with
\[ Z_m \triangleq \Big\{ g \in C^{\alpha}(\T), g(w)=\sum_{\underset{n=1}{n \neq m}}^{+ \infty} g_ne_{n}, g_n \in \R \Big\}.\]
As $Z_m$ is closed in Y and of co-dimension one, it is enough to check that the range is $Z_m$.
First, it is obvious that 
\[ \mathcal{R}(D_f G(\varepsilon,\Omega_m(\EE),\textnormal{Id}) \subset Z_m\]
and it thus remains to check the reverse inclusion. Let  $\displaystyle{g=\sum_{n\geq1} g_n e_n\in Z_m}$; we want to find $h \in X$ such that
\[D_fG(\varepsilon,\Omega_m(\EE),\hbox{Id})(h)=g.\]
Set  $\displaystyle{h(w)=\sum_{n\geq0}h_n \overline{w}^n}$, then  the equation 
\[D_fG(\varepsilon,\Omega_m(\EE),\textnormal{Id})h=g\]
admits an explicit solution such that 
\[ h_{n}=\frac{g_{n+1}}{(n+1)(\Omega_{n+1}(\EE)-\Omega_{m}(\EE))}, \quad n\neq m-1 \]
and 
\[h_{m-1}=0.\]

We next check that $h \in C^{1+ \alpha}(\T)$.
Since 
\[h(w)=\sum_{\underset{n\geq0}{n \neq m-1}} \frac{g_{n+1}}{(n+1)(\Omega_{n+1}(\EE)-\Omega_m(\EE))} \overline{w}^n\]
then  it follows from Cauchy-Schwarz inequality and the Bessel identity  that
\begin{align*}
\|h\|_{L^\infty(\T)}&\leq C_0\sum_{n\geq1}\frac{|g_{n+1}|}{n+1}\\
&\leq C\|g\|_{L^2(\T)}\\
&\leq  C\|g\|_{C^{\alpha}(\T)}
\end{align*}
where $C_0$ is the inverse of the  distance between $\Omega_m(\EE)$ and $\mathbb{S}\backslash\{\Omega_m(\EE)\}$.
$C_0$ is finite due to the monotonicity of the eigenvalues.
We now prove that the derivative $h'$ belongs to  $C^{\alpha}$. It is obvious that
\[h'(w)=\sum_{\underset{n=1}{n \neq m-1}}^{+ \infty} \frac{ng_{n+1}}{(n+1)(\Omega_{m}(\EE)-\Omega_{n+1}(\EE))} \overline{w}^{n+1},\]
which can be split as follows
\begin{align*}
h'(w)&=\sum_{\underset{n=2}{n \neq m}}^{+ \infty} \frac{g_{n}}{\Omega_{m}(\EE)-\Omega_{n}(\EE)} \overline{w}^{n}+\sum_{\underset{n=2}{n \neq m}}^{+ \infty} \frac{g_{n}}{n(\Omega_n(\EE)-\Omega_{m}(\EE))} \overline{w}^{n}\\
=&-\sum_{\underset{n=2}{n \neq m}}^{+ \infty} \frac{g_{n}}{K_m(\EE )I_m(\EE )} \overline{w}^{n} -\sum_{\underset{n=2}{n \neq m}}^{+ \infty} g_{n} \left[\frac{1}{\Omega_n(\EE)-\Omega_m(\EE)}-\frac{1}{K_m(|\EE | )I_m(| \EE | )} \right] \overline{w}^{n}\\
&+\sum_{\underset{n=2}{n \neq m}}^{+ \infty} \frac{g_{n}}{nK_m(| \EE |)I_m(| \EE  |)} \overline{w}^{n}+\sum_{\underset{n=2}{n \neq m}}^{+ \infty} \frac{g_{n}}{n} \left[\frac{1}{\Omega_n( \EE |)-\Omega_m(\EE)}-\frac{1}{K_m(| \EE | )I_m(| \EE | )} \right] \overline{w}^{n}.
\end{align*}
Set
\[\chi(w)=\sum_{\underset{n=2}{n \neq m}}^{+ \infty} g_{n} \overline{w}^n,\quad H_1(w)=\sum_{\underset{n=2}{n \neq m}}^{+ \infty}\left[\frac{1}{\Omega_n(\EE)-\Omega_m(\EE)}-\frac{1}{K_m(| \EE |)I_m(| \EE |)} \right] \overline{w}^{n},\quad H_2(w)=\sum_{\underset{n=2}{n \neq m}}^{+ \infty}\frac{\overline{w}^n}{n}\]
and
\[H_3(w)=\sum_{\underset{n=2}{n \neq m}}^{+ \infty}\frac{1}{n}\left[\frac{1}{\Omega_n(\EE)-\Omega_m(\EE)}-\frac{1}{K_m(| \EE |)I_m(| \EE| )} \right] \overline{w}^{n}.\]
Then
\[h'(w)=-\frac{1}{K_m(|\EE| )I_m(| \EE |)} \chi(w) - \chi \ast H_1(w)+\frac{1}{K_m(| \EE | )I_m(| \EE| )} \chi \ast H_2(w)+ \chi \ast H_3(w).\] 
As $\chi(w)=\Pi_+(2ig(w))-2ig(w)$, with $\Pi_+$ being the Szeg\"o projection that  sends  continuously   $C^\alpha(\T)$ to itself, we deduce that $\chi\in C^\alpha(\T)$. Hence in order to ensure $h^\prime\in  C^\alpha(\T)$ it is enough to prove that $H_j\in L^1(\T), j\in\{1,2,3\}$.
 Let us start with $H_1$. It is obvious that
\[\left\vert \frac{1}{\Omega_n(\EE)-\Omega_m(\EE)}-\frac{1}{K_m(| \EE | )I_m(| \EE| )} \right\vert =\left\vert \frac{K_n(| \EE | )I_n( | \EE | )}{K_m(| \EE | )I_m( |\EE | ) \big( K_m(| \EE |)I_m(l \EE |)-K_n(| \EE | )I_n(| \EE | )\big)} \right\vert. \]
Hence using \eqref{decay1} we find  a constant $C$ depending on $m$ and $\EE$ such that for any $n\neq m$ 
\begin{align*}
\left\vert \frac{1}{\Omega_n(\EE)-\Omega_m(\EE)}-\frac{1}{K_m(|\varepsilon |)I_m( |\varepsilon |)}  \right\vert & \leq C \,K_n(| \varepsilon |)I_n(| \varepsilon |)\\
&\le C\frac{\ln(n+1)}{n}.
\end{align*}
According to the Parseval identity, this proves  that $H_1\in L^2(\T)$  and by the usual embedding we find $H_1\in L^1(\T)$. It is simple to check that $H_2, H_3$ belong to $L^2(\T)$ and so to $L^1(\T)$ which completes the desired result.

$(4)$ For the transversality assumption, it is obvious that for any $h\in X$
\[ \partial_\Omega D_{f }G\big(\varepsilon,\Omega_m(\EE),\hbox{Id}\big) h= \textnormal{Im} \left\lbrace h(w)\overline{w}+\overline{h'(w)} \right\rbrace.\]
Therefore, for $v_m(w)=\overline{w}^{m-1}$ 
\[ \partial_\Omega D_{f }G\big(\varepsilon,\Omega_m(\EE),\hbox{Id}\big) v_m=-m e_m \notin \mathcal{R}(D_fG \big(\varepsilon,\Omega_m(\EE),\textnormal{Id})\big)\]
and consequently the transversality condition is verified.

\end{proof}
%

\section{Imperfect bifurcation close to the branch of Kirchhoff ellipses }
\label{sec:2fold}
This section is devoted to the study of the global structure of the
two-fold branch. According to \mbox{Theorem \ref{existence}} we know there
exists
a local branch close to Rankine vortices that bifurcates at the
point $\Omega_2(\EE)$. For $\EE=0$ the full branch is explicitly
described by Kirchhoff ellipses, and according to \cite{4,19,Kam,Luz}
we known that from this branch a countable family of bifurcating
curves emerges at the Love instability points \cite{Love}.  Notice
that these new curves model alternating one/two-fold V-states and the
two-fold V-states are characterized by an odd frequency perturbation
of the conformal mapping of the ellipse $w\in\T\mapsto
w+Q\overline{w}$.\\

We investigate below the `imperfect'
bifurcation, that is, the behavior of the solution branch structure subject to
a small perturbation in $\EE$. We prove that the scenarios of
persistence/breakdown symmetry occur simultaneously close to the
Kirchhoff ellipse branch.  Indeed, we prove by using perturbation theory,
see Theorem \ref{thmF2}, that far from the second bifurcating
point
the local structure of the two-fold branch persists and varies
continuously with respect to a small perturbation in $\EE$.  However
around the singularity set the issue depends on the symmetry of the
V-states. In fact, we show in Theorem \ref{thmF2} that the diagram
structure around the one-fold bifurcating curves is not destroyed and
is similar to the Euler one. However, and this is only proved for the
$m=4$ Love instability point, the symmetry is broken down around the
first bifurcating curve of the two-fold V-states (see Theorem
\ref{thmbreak}). This is a kind of resonance phenomenon between the two
branches with the same symmetry leading to a separation of the
singularity and a loss of the connectedness. Numerically, in Section
\ref{sec:num2} 
this behavior is observed for the first    two-fold branches emerging from
the ellipse, but from an analytical standpoint the problem is
difficult due to the cumbersome computations required for higher elliptical azimuthal wavenumbers.

\subsection{Function spaces II}\label{FS1}
We first introduce the function spaces suitable for studying
the bifurcation from the two-fold branch. We draw attention to the
fact that we use the same notation as in the Section \ref{sec:mfold}
dealing with the $m$-folds structure but with a different meaning.  For
$\alpha\in (0,1)$, we set
\begin{equation}\label{spaceX}
 X= \bigg\{f \in C^{1+\alpha}(\T), f(w)=\sum_{n=2}^{+ \infty} f_n w^{n}, f_n \in \R \bigg\}
 \end{equation}
 and
 \begin{equation}\label{spaceY}
 Y=\bigg\{g \in C^{\alpha}(\T), g(w)=\sum_{n=1}^{+ \infty} g_ne_{n}(w), g_n \in \R \bigg\} \textnormal{, with }\quad e_n(w)= \textnormal{Im}(w^n). 
 \end{equation}
 
\subsection{Summary of the bifurcations from Kirchhoff ellipses}
The results of this section were obtained in \cite{19} and for the
commodity of the presentation we briefly  recall them.
Since ellipses are explicit rotating solutions for the Euler equations,
then from \cite{19} one finds that
\[  G\big(0, {\scriptstyle{\frac{1-Q^2}{4}}}, \alpha_Q\big)=0,\quad  \forall Q \in (0,1) \, \]
with  $\alpha_Q \colon w\in \T\mapsto w+ {Q}\overline{w}$ being  the conformal parametrization of the ellipse centered at the origin  and with semi-axes $1\pm Q$ and $Q\in[0,1)$. Notice that Kirchhoff discovered that such ellipses
rotate at the angular velocity  $\frac{1-Q^2}{4}$.
Introducing  
\begin{equation}\label{Eq8}
F(\varepsilon, Q ,f)=G\big(\varepsilon,{\scriptstyle{\frac{1-Q^2}{4}}}, \alpha_Q+f\big),\quad f\in X
\end{equation}
where  the space $X$ is described in  \eqref{spaceX},
it is plain that
\[F(0, Q ,0)=0,\quad  \forall Q \in [0,1).\]
From \eqref{Eq71} one obtains
\begin{equation}\label{diff1}
\mathcal{L}_Q\triangleq D_f F(0,Q,0)=-D_f G_E(\frac{1-Q^2}{2}, \alpha_Q).
\end{equation}
Let   $m\geq3$ be an integer and denote by $Q_m$ the unique solution in $[0,1)$ of the equation
\begin{equation}\label{equaQ4}
 1+Q^m-{\scriptstyle{\frac{1-Q^2}{2}}} m=0,
\end{equation}
and set 
$$
\mathcal{S}\triangleq \big\{Q_m,\,  m\geq 3\big\}.
$$
At various points in the argument,
we need to distinguish between the following  two subsets of $\mathcal{S}$:
\begin{equation}\label{Resset}
 \mathcal{S}_{\textnormal{reso}}\triangleq \big\{Q_{2m},\,  m\geq 2\big\}\quad \hbox{and}\quad \mathcal{S}_{\textnormal{Nreso}}\triangleq \big\{Q_{2m+1},\,  m\geq 1\big\}.
\end{equation}
The first one is called the `resonant set' and the second is the
`non-resonant set'.
Note that  from \cite{19} we know that the sequence $(Q_m)_{m\geq3}$ is strictly increasing with 
$$
\lim_{m\to+\infty} Q_m=1.
$$
The following result dealing with the structure of the linearized operator $\mathcal{L}_Q$ was proved \mbox{in \cite{19}}. This was used to prove the
existence of bifurcations from Kirchhoff ellipses using the Crandall-Rabinowitz theorem.
\begin{proposition}\label{prop-spec11}
Let  $X$  and $Y$ be the spaces  introduced in \eqref{spaceX} and \eqref{spaceY}. Then the following assertions hold true.
\begin{enumerate}
\item Let $h(w)=\displaystyle{\sum_{n\geq2} a_n w^n\in X}$, then
$$
\mathcal{L}_Qh=\frac{1}{2} \sum_{n\geq1} g_{n+1}e_n; \quad e_n(w)=\textnormal{Im}(w^n),
$$
with
\begin{eqnarray*}
g_2&=&\frac12(1+Q)^2 a_2,\\
 g_3&=&2Q^2a_3,\\
g_{n+1}&=& \Big(1+Q^n-\frac{1-Q^2}{2}n\Big)\big(a_{n+1}-Q a_{n-1}\big), \quad \forall n\geq 3.
\end{eqnarray*}
\item The kernel of $\mathcal{L}_Q$ is non-trivial if and only if $Q=Q_m\in \mathcal{S}$ and it is a one-dimensional vector space generated by
$$
v_m(w)=\frac{w^{m+1}}{1-Qw^2}.
$$ 
 \item The range of $\mathcal{L}_Q$ is of co-dimension one in $Y$ and it is  given by
 $$
 R(\mathcal{L}_Q)=\Big\{ g\in C^\alpha(\mathbb{T}), g= \sum_{n\geq1\\\atop n\neq m}g_{n+1}e_n, \quad g_n \in \mathbb{R} \Big\}.
 $$
 \item Transversality assumption: for any $Q=Q_m\in\mathcal{S},$
 $$
 \partial_Q\mathcal{L}_Q v_m\notin R(\mathcal{L}_Q).
 $$
\end{enumerate}
\end{proposition}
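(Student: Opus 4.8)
The plan is to compute the linearised operator $\mathcal{L}_Q\triangleq D_fF(0,Q,0)$ in closed form and then read off all four assertions from its Fourier-multiplier structure. By \eqref{Eq8} and \eqref{diff1} this amounts to linearising the Euler vortex-patch functional at the ellipse parametrisation $\alpha_Q(w)=w+Q\overline w$, i.e.\ to differentiating $G_E$ along directions $f(w)=\sum_{n\ge2}a_nw^n\in X$ at $\Phi=\alpha_Q$. The algebraic fact that makes this tractable is that on $\T$ (where $\overline w=1/w$) one has
\[
\alpha_Q(\tau)-\alpha_Q(w)=(\tau-w)\,\frac{w\tau-Q}{w\tau},\qquad
\overline{\alpha_Q(\tau)}-\overline{\alpha_Q(w)}=(\tau-w)\,\frac{Q\,w\tau-1}{w\tau},
\]
so that the Cauchy-type kernel occurring in $G_E$ collapses to the rational function $\dfrac{Q\,w\tau-1}{w\tau-Q}$, which has no pole on $\T$ (its only pole $\tau=Q/w$ lies in $\D$ since $0\le Q<1$). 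Writing $\Phi=\alpha_Q+tf$ and differentiating at $t=0$, every resulting term becomes a mean value $\fint_\T$ of an explicit rational function of $\tau$ — the variations of the kernel produce factors $\tfrac{f(\tau)-f(w)}{\tau-w}$ and $\tfrac{\overline{f(\tau)}-\overline{f(w)}}{\tau-w}$, which are regular at $\tau=w$ because $f$ is smooth — and each such mean value is evaluated by residues at $\tau=0$ and $\tau=Q/w$. Collecting everything and projecting onto $e_n(w)=\textnormal{Im}(w^n)$ produces assertion $(1)$, the multiplier $\mu_n(Q)\triangleq 1+Q^{n}-\tfrac{1-Q^2}{2}\,n$ appearing for every $n\ge1$ once one adopts the convention $a_0=a_1=0$; the exceptional values $g_2=\tfrac12(1+Q)^2a_2$ and $g_3=2Q^2a_3$ are just the instances $n=1$ and $n=2$ of this formula (note $\mu_1(Q)=\tfrac12(1+Q)^2$ and $\mu_2(Q)=2Q^2$).

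Granting $(1)$, assertions $(2)$ and $(3)$ reduce to linear algebra together with a regularity check. The kernel equation becomes $\mu_n(Q)\,(a_{n+1}-Qa_{n-1})=0$ for all $n\ge1$. Using the facts recalled from \cite{19} — that $Q\mapsto\mu_n(Q)$ has a unique zero $Q_n\in(0,1)$ and that $(Q_n)_{n\ge3}$ increases strictly to $1$ — at most one $\mu_n(Q)$ can vanish for a fixed $Q$. If $Q=Q_m\in\mathcal{S}$ then $a_{n+1}=Qa_{n-1}$ holds for every $n\ge1$ except $n=m$; since $a_0=a_1=0$ this propagation forces $a_j=0$ for $j\le m$, leaves $a_{m+1}$ free, and then forces $a_{m+1+2k}=Q^{k}a_{m+1}$ and $a_{m+2+2k}=0$ for all $k\ge0$, i.e.\ $h$ is a scalar multiple of $v_m(w)=\dfrac{w^{m+1}}{1-Qw^2}=\sum_{k\ge0}Q^{k}w^{m+1+2k}$; if $Q\notin\mathcal{S}$ all coefficients vanish, so the kernel is trivial. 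For the range one runs the same recursion in reverse: given $g\in Y$, the coefficients $a_2,a_3$ are determined directly and each $a_{n+1}$ is obtained from $a_{n-1}$ and the coefficients of $g$ after dividing by $\mu_n(Q)$, the only obstruction occurring at $n=m$, where $\mu_m(Q_m)=0$ imposes the single scalar condition that the $e_m$-coefficient of $g$ vanish; this identifies $R(\mathcal{L}_Q)$ with the stated codimension-one closed subspace. It then remains to verify that the $h$ so produced actually lies in $C^{1+\alpha}(\T)$; since $|\mu_n(Q)|\asymp n$, the solution map decomposes into multiplication by a constant plus convolutions against kernels whose Fourier coefficients decay like $1/n$ (hence lie in $L^2(\T)\subset L^1(\T)$), and one concludes exactly as in the $m$-fold estimate of Proposition \ref{assumptions}$(3)$, using boundedness of the Szeg\"o projection on $C^\alpha(\T)$.

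For the transversality assumption $(4)$, by $(1)$ the $e_m$-component of $\mathcal{L}_Qh$ is $\tfrac12\mu_m(Q)\,(a_{m+1}-Qa_{m-1})$. Differentiating in $Q$ at $Q=Q_m$ with $h=v_m$ held fixed (so its coefficients $a_{m+1}=1$, $a_{m-1}=0$ do not move) and using $\mu_m(Q_m)=0$ shows that the $e_m$-component of $\partial_Q\mathcal{L}_Qv_m$ equals $\tfrac12\mu_m'(Q_m)$. Since $\mu_m'(Q)=mQ^{m-1}+mQ>0$ on $(0,1)$, this is nonzero, and $e_m$ is precisely the direction absent from $R(\mathcal{L}_Q)$, whence $\partial_Q\mathcal{L}_Qv_m\notin R(\mathcal{L}_Q)$.

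I expect the main obstacle to be the first step. Differentiating the singular integral term of $G_E$ demands care — the numerator, the denominator and the factor $\Phi'(\tau)$ all vary, producing several new integrals — and the residue bookkeeping must be pushed far enough that the answer collapses exactly to $\mu_n(Q)\,(a_{n+1}-Qa_{n-1})$, which is where a dropped term or a sign error would most easily slip in. The secondary, more routine difficulty is the $C^{1+\alpha}$-regularity in $(3)$, which needs the quantitative growth $|\mu_n(Q)|\asymp n$ and H\"older bounds for the associated convolution operators rather than a purely formal manipulation of Fourier coefficients.
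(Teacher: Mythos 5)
The paper does not actually prove this proposition: it is recalled verbatim from \cite{19} (``The following result \dots was proved in \cite{19}''), so there is no in-paper argument to compare against; the relevant benchmark is the proof in \cite{19}, and your plan follows essentially that same route. The structural identities you isolate are correct: on $\T$ one has $\alpha_Q(\tau)-\alpha_Q(w)=(\tau-w)\frac{\tau w-Q}{\tau w}$ and $\overline{\alpha_Q(\tau)}-\overline{\alpha_Q(w)}=(\tau-w)\frac{Q\tau w-1}{\tau w}$, so the Cauchy kernel in $G_E$ collapses to $\frac{Q\tau w-1}{\tau w-Q}$ with its only pole at $\tau=Q/w\in\D$, and every term of the linearisation is a residue at $\tau=0$ or $\tau=Q/w$. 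Your observation that $\mu_1(Q)=\frac{(1+Q)^2}{2}$ and $\mu_2(Q)=2Q^2$, so that the exceptional coefficients $g_2,g_3$ are the cases $n=1,2$ of the general formula under the convention $a_0=a_1=0$, is correct and is a genuine consistency check on assertion $(1)$. Granting $(1)$, your derivations of $(2)$--$(4)$ are complete and correct: the two-step recursion $a_{n+1}=Qa_{n-1}$ with $a_0=a_1=0$ kills everything below degree $m+1$ and produces exactly the geometric series $\sum_k Q^k w^{m+1+2k}=\frac{w^{m+1}}{1-Qw^2}$; the range argument with the single obstruction at $n=m$ and the $C^{1+\alpha}$ regularity via $|\mu_n(Q)|\asymp n$, $L^1$ convolution kernels and the Szeg\H{o} projection mirrors exactly what the paper itself does in the proofs of Proposition \ref{assumptions}$(3)$ and Theorem \ref{thmF2}; and the transversality reduces to $\tfrac12\mu_m'(Q_m)=\tfrac{m}{2}(Q_m^{m-1}+Q_m)>0$, which is nonzero and lands on the missing direction $e_m$. (Your value $\tfrac12\mu_4'(Q_4)=2(Q_4+Q_4^3)$ differs by a factor $2$ from the $4(Q_4+Q_4^3)$ quoted later in the paper's proof of Theorem \ref{thmbreak}; this appears to be an internal normalisation discrepancy of the paper rather than an error of yours, and it does not affect the sign, which is all that matters for $(4)$ and for the positivity of $b$ there.)

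The one caveat is that assertion $(1)$ --- the only part carrying real content --- is in your write-up a described computation rather than an executed one: the differentiation of the singular integral (variation of the numerator, of the denominator, and of $\Phi'(\tau)$), the residue evaluation of each resulting term, and the verification that the sum collapses to $\mu_n(Q)(a_{n+1}-Qa_{n-1})$ are precisely the content of \cite{19} and would need to be carried out in full for a self-contained proof. You correctly identify this as the locus of risk; as a strategy it is sound and is the standard one, but as written the proposal establishes $(2)$--$(4)$ conditionally on $(1)$.
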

\subsection{Regularity of the  functional II}\label{RF2}
The main goal of this section is to study the regularity properties required by Theorem \ref{C-R0} for the functional $F$ introduced in \eqref{Eq8}.  
\begin{proposition}\label{biendef}
Let  $\alpha\in (0,1),\, \mu \in (0,1)$  and  set $r_{\mu}=\frac{1-\mu}{2}$. Then we have

 \[ \begin{array}{lll}
    F: (-1,1) \times (0, \mu)\times B_{r_{\mu}}    &\longrightarrow &Y\\
         (\varepsilon,Q,f)& \longmapsto& F(\varepsilon, \Omega,f)
    \end{array} \]
    is well-defined and of class $C^1$, and $\partial_Q\partial_f F$ exists and is continuous on $(-1,1) \times (0, \mu)\times B_{r_{\mu}},$ where $B_{r_{\mu}}=\big\{f\in X,\,  \text{ }\Vert f \Vert_{C^{1+\alpha}} \leq r_{\mu}\big\}.$   Moreover for any $i, j\in\N, i+j\leq3$ the function  $\partial_{\Omega}^i \partial_f^j F(\varepsilon,.,.)$ is continuous.
    \end{proposition}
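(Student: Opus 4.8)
The plan is to split $F$ into an elementary local part and a weakly singular nonlocal part, and to treat the latter by peeling off the universal logarithmic kernel $\log|w-\tau|$ and invoking Lemma~\ref{LemSing}; the whole scheme runs parallel to Proposition~\ref{string11} and to \cite{15,20}, the genuinely new ingredients being the dependence on $Q$ through $\alpha_Q$ and the behaviour near $\EE=0$. \emph{Step 1 (admissibility of $\Phi=\alpha_Q+f$ and reduction).} On $\T$ one has $\Phi(w)=w+Q\overline w+f(w)$ and $\Phi'(w)=1-Q\overline w^{\,2}+f'(w)$. The bounds $Q\in(0,\mu)$ and $\|f\|_{C^{1+\alpha}}\le r_\mu=\tfrac{1-\mu}{2}$ are tailored exactly so that $\Phi$ is an admissible contour parametrization: since $\sup_\T|f'|\le r_\mu$ and $|f(w)-f(\tau)|\le r_\mu|w-\tau|$ (integrate $f'$ along the chord $[\tau,w]\subset\overline{\D}$, using that $f$ extends holomorphically to $\D$), while $\overline w-\overline\tau=-\overline w\,\overline\tau\,(w-\tau)$ and $|1-Q\overline w\,\overline\tau|\ge 1-Q$, one gets $|\Phi'(w)|\ge\tfrac{1-\mu}{2}>0$ together with the chord--arc inequality $\tfrac{1-\mu}{2}|w-\tau|\le|\Phi(w)-\Phi(\tau)|\le C_\mu|w-\tau|$ on $\T\times\T$. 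Next split $G=G_{\mathrm{loc}}+G_{\mathrm{nl}}$ with $G_{\mathrm{loc}}(\EE,\Omega,\Phi)(w)=\Omega\,\mathrm{Im}\{\Phi(w)\overline{\Phi'(w)}\,\overline w\}$ and $G_{\mathrm{nl}}(\EE,\Omega,\Phi)(w)=-\mathrm{Im}\{\overline{\Phi'(w)}\,\overline w\,I(\EE,\Phi)(w)\}$, $I(\EE,\Phi)(w)=\fint_\T\Phi'(\tau)K_0^\EE(|\Phi(w)-\Phi(\tau)|)\,d\tau$. Since $Q\mapsto\Omega=\tfrac{1-Q^2}{4}$ and $Q\mapsto\alpha_Q$ are polynomial, $f\mapsto(f,f')$ is bounded linear into $C^{1+\alpha}\times C^\alpha$, and $C^\alpha(\T)$ is a Banach algebra, $G_{\mathrm{loc}}$ is $C^\infty$ jointly; and $w\mapsto\overline{\Phi'(w)}\,\overline w$ contributes only a harmless $C^\alpha$ multiplier, so the whole statement reduces to the corresponding regularity of $(\EE,Q,f)\mapsto I(\EE,\alpha_Q+f)\in C^\alpha(\T)$, plus the fact that the output lies in $Y$. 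The latter is a reflection-symmetry statement: $\alpha_Q+f$ has real Taylor coefficients, hence $\Phi(\overline w)=\overline{\Phi(w)}$ and $\Phi'(\overline w)=\overline{\Phi'(w)}$, whence $G(\EE,\Omega,\Phi)(\overline w)=-G(\EE,\Omega,\Phi)(w)$ exactly as in Proposition~\ref{string11}(2), and this forces the vanishing of both the mean and of all cosine modes, i.e.\ membership in $Y$.

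\emph{Step 2 (Bessel splitting; the regular part).} Using \eqref{Expand1} I would write $K_0^\EE(z)=-\log(z)\,I_0(|\EE|z)+\widetilde B(z^2;\EE)$, where $z\mapsto I_0(|\EE|z)$ and $z\mapsto\widetilde B(z^2;\EE)$ are even entire functions of $z$ (power series in $z^2$) with $\EE$-dependent coefficients; the only non-smooth $\EE$-dependence sits in coefficients proportional to $\EE^{2k}\log|\EE|$, $k\ge1$, which are merely $C^1$ near $\EE=0$ --- consistent with the fact that only joint $C^1$ regularity is asserted, whereas for $\EE$ fixed these coefficients are constants and one recovers $C^3$ in $(Q,f)$. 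Split $I=I_{\mathrm{reg}}+I_{\mathrm{sing}}$ accordingly. The regular part is $I_{\mathrm{reg}}(\EE,\Phi)(w)=\fint_\T\Phi'(\tau)\,\widetilde B(|\Phi(w)-\Phi(\tau)|^2;\EE)\,d\tau$; for $\EE$ fixed its integrand is a fixed entire function composed with the polynomial expression $(\Phi(w)-\Phi(\tau))\overline{(\Phi(w)-\Phi(\tau))}$, times $\Phi'(\tau)$, so the routine composition-plus-Banach-algebra argument of \cite{15,20}, combined with differentiation under the integral sign, shows $(Q,f)\mapsto I_{\mathrm{reg}}$ is $C^\infty$ into $C^\alpha(\T)$, is $C^1$ jointly in $(\EE,Q,f)$, and has $\partial_Q\partial_f I_{\mathrm{reg}}$ continuous.

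\emph{Step 3 (the singular part --- the main obstacle).} For $w\neq\tau$ write $\log|\Phi(w)-\Phi(\tau)|=\log|w-\tau|+\ell_\Phi(w,\tau)$ with $\ell_\Phi(w,\tau)=\log\bigl|\tfrac{\Phi(w)-\Phi(\tau)}{w-\tau}\bigr|$; by Step~1 the quotient lies between two positive constants, so $\ell_\Phi$ is bounded on $\T\times\T$ and extends continuously to the diagonal (value $\log|\Phi'(w)|$). Hence $I_{\mathrm{sing}}$ is the sum of $-\fint_\T\Phi'(\tau)I_0(|\EE|\,|\Phi(w)-\Phi(\tau)|)\,\ell_\Phi(w,\tau)\,d\tau$ and $-\fint_\T\Phi'(\tau)I_0(|\EE|\,|\Phi(w)-\Phi(\tau)|)\,\log|w-\tau|\,d\tau$. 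In the first integral the kernel $K(w,\tau)=I_0(|\EE|\,|\Phi(w)-\Phi(\tau)|)\,\ell_\Phi(w,\tau)$ is bounded, and since $\Phi\in C^{1+\alpha}$ and $\partial_w\log|\Phi(w)-\Phi(\tau)|=\mathrm{Re}\bigl(\Phi'(w)/(\Phi(w)-\Phi(\tau))\bigr)$, the chord bound gives $|\partial_wK(w,\tau)|\le C/|w-\tau|$; Lemma~\ref{LemSing} then yields the $C^\alpha$ estimate, and --- crucially --- each formal derivative $\partial_Q$, $\partial_f$ (or $\partial_\EE$) of $K$ produces again a kernel that is bounded with gradient $\lesssim 1/|w-\tau|$ (every extra factor $1/(\Phi(w)-\Phi(\tau))$ from differentiating $\log$ or $I_0$ is absorbed by a compensating $|w-\tau|$ via the chord bound), so dominated convergence gives the required joint $C^1$ regularity, continuity of $\partial_Q\partial_f$, and, for $\EE$ fixed, the full $C^3$ statement, i.e.\ continuity of $\partial_Q^i\partial_f^jF$ for $i+j\le 3$. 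In the second integral the prefactor $\Phi'(\tau)I_0(|\EE|\,|\Phi(w)-\Phi(\tau)|)$ is smooth in $(Q,f)$ with values in $C^\alpha(\T\times\T)$ exactly as in Step~2, and it multiplies the fixed universal logarithmic kernel $\log|w-\tau|$; this is precisely the Euler-type nonlocal term handled in \cite{15,20} and in Proposition~\ref{string11} (one reduces it via the identity \eqref{Flog} to a Lemma~\ref{LemSing}-type operator), which is bounded on $C^\alpha$ and transfers the smooth dependence of the prefactor. Combining the three contributions yields the proposition. The genuine difficulty lies entirely in this last step: verifying the hypotheses of Lemma~\ref{LemSing} for $K$ and for all of its $Q$-, $f$- and $\EE$-derivatives of the needed orders, while carrying along the chord--arc constant from Step~1, which is the sole reason for the restriction $Q\in(0,\mu)$, $f\in B_{r_\mu}$.
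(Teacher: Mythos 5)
Your overall architecture is the same as the paper's: peel off the elementary local term, establish the chord--arc bounds $\tfrac{1-\mu}{2}|w-\tau|\le|\Phi(w)-\Phi(\tau)|\le C_\mu|w-\tau|$ (this is exactly why $r_\mu=\tfrac{1-\mu}{2}$), expand $K_0^\EE$ so as to isolate a $-\log$ singularity plus analytic remainders whose only rough $\EE$-dependence sits in $\EE^2\log|\EE|$-type coefficients (hence joint $C^1$ but $C^3$ in $(Q,f)$ for fixed $\EE$), and treat the nonlocal operators with Lemma~\ref{LemSing} and the identity \eqref{Flog}. Steps 1 and 2 are fine and match the paper.

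There is, however, one concrete misstep in Step 3, caused by the order in which you perform the two splittings. After writing $\log|\Phi(w)-\Phi(\tau)|=\log|w-\tau|+\ell_\Phi(w,\tau)$ you are left with the term
\[
-\fint_{\T}\Phi'(\tau)\,I_0\big(|\EE|\,|\Phi(w)-\Phi(\tau)|\big)\,\log|w-\tau|\,d\tau,
\]
which you dismiss as ``precisely the Euler-type nonlocal term \dots reduced via \eqref{Flog}.'' It is not: the identity \eqref{Flog} is a change of variables to the image curve $\Gamma=\Phi(\T)$ and requires the integrand to be exactly $\log|\Phi(w)-\Phi(\tau)|\,\Phi'(\tau)$; it does not survive the insertion of the factor $I_0(|\EE|\cdot)$ nor the replacement of $\log|\Phi(w)-\Phi(\tau)|$ by $\log|w-\tau|$. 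Nor does Lemma~\ref{LemSing} apply as stated, since the kernel $\log|w-\tau|$ is unbounded on the diagonal. The repair is easy and is in effect what the paper does: split in the other order. First isolate the exact Euler term $T_0\Phi'=\fint_\T\Phi'(\tau)\log|\Phi(w)-\Phi(\tau)|\,d\tau$ (handled by \eqref{Flog}, which converts it into an operator with the bounded kernel $\tfrac{\overline{\Phi(w)}-\overline{\Phi(\tau)}}{\Phi(w)-\Phi(\tau)}$), and dump everything else --- in your notation the pieces carrying $I_0(|\EE|x)-1=O(\EE^2x^2)$ and the entire series $\mathcal{K}_2$ --- into operators whose kernels are bounded with gradient $\lesssim1/|w-\tau|$, because the $x^2$ factor tames the logarithm; these are the paper's $T_1$, $T_2$. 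Equivalently, in your scheme, write $I_0=1+(I_0-1)$ \emph{before} peeling off $\log|w-\tau|$: the ``$1$'' part is the genuine Euler term and the $(I_0-1)$ part has a bounded kernel. With that reordering your argument goes through and coincides with the paper's proof.
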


\begin{proof}
We only sketch the basic steps of the proof which closely parallels
the proof developed in \cite{19}. For more details we refer the reader
to this reference. 
First, we write
\begin{eqnarray*}
F(\varepsilon,Q,f(w))&=& \textnormal{Im} \Big\{\scriptstyle{\frac{1-Q^2}{4}} \Big(1+Q \overline{w}^2+\overline{w} f(w)\Big)\big(1-Qw^2+\overline{f'(w)}\big)\\
 &-& \overline{\Phi'(w)} \overline{w} \fint_{\T} \Phi'(\tau)  K_0^\EE\big( \vert \Phi(w)- \Phi(\tau)\vert \big) d\tau  \Big\}
\end{eqnarray*}
with the notation $\Phi(w)= \alpha_q(w)+f(w)$ and
\[
K_0^\EE(x)= K_0\big(|\varepsilon| x \big)+\log\big({\scriptstyle{\frac{|\varepsilon|}{2}}}\big). \]
Since  $C^{\alpha}$ is an algebra,  $f \in C^{1+\alpha}$ and $f^\prime \in C^{\alpha}$, then the  first function 
\[w\in\T\mapsto   {\scriptstyle{\frac{1-Q^2}{4}}}\hbox{Im}\Big\{ [1+Q \overline{w}^2+\overline{w} f(w)][1-Qw^2+\overline{f'(w)}]\Big\}
\]
 belongs to $C^{\alpha}$ and its Fourier coefficients  are all real; therefore, it belongs to the space $Y$. The $C^1$  regularity with respect to $(Q,f)$ is elementary and was discussed in \cite{19}.  
For the second term, using the results in subsection $\ref{Bessel Function}$, one may write 
\begin{equation}\label{FF1}K_0^\varepsilon (x)=-\log(x)-\Big(\log\Big({\scriptstyle\frac{|\varepsilon|}{2}}\Big)+\log(x)\Big)\frac{\varepsilon^2 x^2}{4}\mathcal{K}_1(\varepsilon^2 x^2)+\mathcal{K}_2(\varepsilon^2x^2)
\end{equation}
where
\[\mathcal{K}_1(z)=\sum_{m=1}^{+ \infty} \frac{\left(\frac{z}{4}\right)^{m-1}}{(m!)^2}\]
and
\[\mathcal{K}_2(z)=\sum_{m=0}^{+ \infty} \left(\frac{z}{4} \right)^{m} \frac{\psi(m+1)}{(m!)^2}.\]
Consequently,
\[-\fint_{\T} \Phi'(\tau) K_0^\EE( \vert \Phi(w)-\Phi(\tau)\vert )d \tau= T_0 \Phi '(w)+T_1 \Phi '(w)+T_2\Phi'(w)\]
where 
\[ T_0\varphi(w)=\fint_{\T} \log(\vert \Phi(w)-\Phi(\tau)\vert ) \varphi(\tau) d \tau,\]
\begin{eqnarray*}T_1\varphi(w)&\triangleq&\fint_{\T} \widehat{K_1}(\tau, w,\varepsilon) \varphi(\tau) d \tau 
\end{eqnarray*}
and
\[ T_2\varphi(w) \triangleq- \fint_{\T} \mathcal{K}_2(\varepsilon^2 \vert \Phi(w)-\Phi(\tau)\vert^2)\varphi(\tau) d \tau \] 
with
\[
\widehat{K_1}(\tau, w,\varepsilon)\triangleq \frac{\EE^2}{4} \Bigg(\log \bigg(\frac{ \vert \varepsilon \vert }{2}\bigg)+\log(\vert \Phi(w)-\Phi(\tau)\vert)\Bigg)  |\Phi(w)-\Phi(\tau)|^2\mathcal{K}_1(\varepsilon^2 \vert \Phi(w)-\Phi(\tau)\vert^2 )
\]

Moreover, we have seen in \eqref{Flog}  that
\[T_0 \Phi '(w)=\overline{\widehat{T}_0\Phi '(w)}\]
with 
\[ \widehat{T_0}\varphi(w)=-\frac{1}{2}{\fint_{\T} \frac{\overline{\Phi(w)}-\overline{\Phi(\tau)}}{\Phi(w)-\Phi(\tau)} \varphi(\tau) d \tau }.\]
Let  $Q \in (0,\mu)$ and take  $r_{\mu}=\frac{1-\mu}{2}$. By the mean value theorem, there exists a constant $C_{\mu}$ such that   for all  $f \in B_{r_{\mu}}$ 
\[{\scriptstyle\frac{1-\mu}{2}} \vert\tau-w \vert \leq \vert \Phi(w)-\Phi(\tau) \vert \leq C_{\mu}|\tau-w|,\quad \forall \tau, w\in \T.
\]
In addition, we may easily check that the kernel $K(\tau,w)=\frac{\overline{\Phi(w)}-\overline{\Phi(\tau)}}{\Phi(w)-\Phi(\tau)}$ satisfies the assumptions of Lemma \ref{LemSing} and thus
\[
\|T_0\Phi^\prime\|_{C^\alpha(\T)}\le C \|\Phi^\prime\|_{L^\infty}\le C_0.
\]
Note that according to \cite{19} we also have that $(\EE,Q, f)\mapsto T_0\Phi^\prime$ is of class $C^1$ from $(-1,1) \times (0,\mu)\times B_{r_\mu}$ to $C^\alpha(\T)$.
As for  $T_2$,  the kernel is not singular and one may easily check that
\[
|\widehat{K}_1(\tau,w,\EE)|+|\partial_w\widehat{K}_1(\tau,w,\EE)|\leq C_0, \forall \tau, w\in \T.
\]
Consequently we may use once again Lemma \ref{LemSing} and deduce that $(\EE,Q, f)\mapsto T_1\Phi^\prime$ is well-defined.  Moreover the Fourier coefficients of $T_1\Phi^\prime$ are real  which follows from the general fact
\[
\overline{ T_1\varphi(w)}= T_1\varphi(\overline{w}), \forall \varphi \in X, \forall w\in \T.
\]
By straightforward arguments we can also prove that  $(\EE,Q,f)\mapsto T_1\Phi^\prime$ is of class $C^1$.
Observe that the regularity with respect to $\EE$ comes in particular from the fact that  the function $\EE\in (-1,1)\mapsto \EE^2 \log\EE $ is $C^1$. The same analysis can be implemented for the last term $T_2\Phi^\prime$ and this concludes the $C^1$ regularity of $(\EE,Q,f)\mapsto F(\EE,Q, f)$. 
Concerning the existence and the regularity of $\partial_Q\partial_f F$ it can be proved similarly to the case $\EE=0$ discussed in \cite{19}.
 \end{proof}

\subsection{Bifurcation diagram far from the resonant set}
The main goal of this section is study the structure of the
bifurcation diagram far from the resonant set
${\mathcal{S}_{\textnormal{reso}}}$ defined in \eqref{Resset}. We
prove its persistence for small perturbations. This is 
done in two different subsections. First we prove the stability
of the Kirchhoff ellipse branch under small perturbations, leading to the
existence of a two-fold branch for \eqref{sqg} living close to the
ellipse branch. Second we explore the bifurcation of one-fold
curves from the two-fold branch close to the non-resonant set. This
proves the persistence of the bifurcation diagram of the Euler equations
under small perturbations in $\EE$ but far from the resonant set.

\subsubsection{Structure of the two-fold curve}

The aim in this subsection is to construct two-fold V-states close to
Kirchhoff ellipses $\mathcal{E}_Q$ parametrized by
$w\in \T\mapsto w+Q\overline w$, with $Q\in[0,1)$.
We first study the case where $Q$ is far from the resonant set
$\mathcal{S}_{\textnormal{reso}}=\big\{Q_{2m}, m\in\N^\star\big\}$. We
prove that a one-dimensional continuous curve can be constructed away
from this set and which remains close to Kirchhoff ellipses for small
values of $\EE$. For this purpose we introduce the spaces

\begin{equation}\label{spaceX2}
X_2=\Big\{ f\in C^{1+\alpha}(\T), f(w)=\sum_{n\in \N^*} f_n w^{2n+1}, f_n\in \R\Big\}
\end{equation}
and
\begin{equation}\label{spaceY2}
Y_2=\Big\{ g\in C^{\alpha}(\T), g(w)=\sum_{n\in \N^*} g_n e_{2n}, g_n\in \R\Big\}, \textnormal{ with } e_n(w)= \hbox{Im}(w^n).
\end{equation}
Note that  a domain whose boundary is parametrized by   $\Phi(w)= w+Q\overline w+f(w), w\in \T$ with $f\in X_2$ is two-fold.
The main goal is to prove the following.
\begin{theorem}\label{thmF2}
Consider the V-state equation  \eqref{Eq8} and let
$m\in \N^*$, $\delta<\frac{Q_{2m+2}-Q_{2m}}{2}$.
Define $I_{m,\delta}=[Q_{2m}+\delta, Q_{ 2m+2}-\delta]$.
Then there exists $\EE_0>$ and a function 
\begin{align*}
 f\colon  [-\EE_0,\EE_0]\times I_{m,\delta}&\longrightarrow  X_2\\
  (\EE,Q)& \longmapsto f(\EE,Q).
\end{align*}
of class $C^1$ such that
\[F\big(\varepsilon, Q ,f(\EE,Q)\big)=0, \quad \forall \, (\EE,Q)\in  [-\EE_0,\EE_0]\times I_{m,\delta}.
\]
In particular the curve $Q\in I_{m,\delta}\mapsto \alpha_Q+ f(\EE,Q)$
describes rotating patches with two-fold symmetry living close to
Kirchhoff ellipses.
\end{theorem}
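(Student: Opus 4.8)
\emph{Proof strategy.} The plan is to apply the implicit function theorem in the variable $f$, treating $\EE$ as the small parameter switched on from $\EE=0$ and $Q$ as a passive parameter ranging over the fixed compact interval $I_{m,\delta}$. Choose $\mu\in(\sup I_{m,\delta},1)$ and regard $F$ from \eqref{Eq8} as a map $[-1,1]\times I_{m,\delta}\times B_{r_\mu}\to Y$ with $r_\mu=\tfrac{1-\mu}{2}$; by Proposition \ref{biendef} it is of class $C^1$ jointly in $(\EE,Q,f)$, and $F(0,Q,0)=0$ for every $Q$. The statement then reduces to two facts: \textbf{(i)} $F$ restricted to perturbations $f\in X_2$ takes values in $Y_2$; and \textbf{(ii)} $D_fF(0,Q,0)=\mathcal L_Q\colon X_2\to Y_2$ is an isomorphism for \emph{every} $Q\in I_{m,\delta}$. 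Granting these, the implicit function theorem applied at each $(0,Q_0,0)$, $Q_0\in I_{m,\delta}$, yields a $C^1$ solution $f$ defined for $|\EE|<\eta(Q_0)$ and $Q$ near $Q_0$, unique among small $f$ and satisfying $f(0,Q)=0$; by compactness finitely many such neighbourhoods cover $I_{m,\delta}$, the local solutions agree on overlaps by uniqueness, and they glue to a single $C^1$ map $f\colon[-\EE_0,\EE_0]\times I_{m,\delta}\to X_2$ with $\EE_0=\min_i\eta(Q_{0,i})>0$ and $F(\EE,Q,f(\EE,Q))=0$. Shrinking $\EE_0$ if needed keeps $f(\EE,Q)=O(\EE)$ inside $B_{r_\mu}$, and then $\Phi=\alpha_Q+f(\EE,Q)$ parametrizes a two-fold rotating QGSW patch that is $C^1$-close to the Kirchhoff ellipse $\mathcal E_Q$.

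For \textbf{(i)}, note that if $f\in X_2$ then $\Phi=\alpha_Q+f$ satisfies $\Phi(-w)=-\Phi(w)$ and $\Phi'(-w)=\Phi'(w)$, since $\alpha_Q(-w)=-\alpha_Q(w)$ and $f$ involves only the odd powers $w^{2n+1}$. Repeating the symmetry computation of Proposition \ref{string11}(2) with $e^{i\pi}$ in place of $e^{2i\pi/m}$ — a change of variable $\tau\mapsto-\tau$ in the nonlocal term and direct inspection of the algebraic term — gives $G(\EE,\Omega,\Phi)(-w)=G(\EE,\Omega,\Phi)(w)$, which is exactly the two-fold symmetry cutting out $Y_2$ inside $Y$, while reality of the Fourier coefficients is inherited from $Y$. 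Since $X_2\subset X$ and $Y_2\subset Y$ are closed subspaces, the $C^1$ regularity of $F\colon[-1,1]\times I_{m,\delta}\times B_{r_\mu}^{X_2}\to Y_2$ is immediate from Proposition \ref{biendef}.

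Point \textbf{(ii)} is the crux. By Proposition \ref{prop-spec11}(1), $\mathcal L_Q$ couples coefficients of equal parity only, so it restricts to $X_2\to Y_2$, where it is triangular: writing $\mu_k(Q)\triangleq 1+Q^{k}-\tfrac{1-Q^2}{2}k$ (so that $\mu_k(Q_k)=0$ by \eqref{equaQ4}), it sends $h=\sum_{j\geq1}a_{2j+1}w^{2j+1}$ to $Q^2a_3\,e_2+\tfrac12\sum_{j\geq2}\mu_{2j}(Q)\,(a_{2j+1}-Qa_{2j-1})\,e_{2j}$. Only the quantities $\mu_k(Q)$ with $k$ even and $k\geq4$ enter this system, and these vanish exactly on the resonant set $\mathcal S_{\textnormal{reso}}=\{Q_4,Q_6,\dots\}$. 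The interval $I_{m,\delta}=[Q_{2m}+\delta,Q_{2m+2}-\delta]$ is built precisely to avoid $\mathcal S_{\textnormal{reso}}$: its only resonant neighbours $Q_{2m}$ and $Q_{2m+2}$ are excised, and the interior point $Q_{2m+1}\in\mathcal S_{\textnormal{Nreso}}$ is harmless because the odd index $2m+1$ is not among the even indices $k\geq4$ appearing above. Hence, using that $(Q_m)$ is increasing with $Q_{2m}\geq0$, one gets on $I_{m,\delta}$ the uniform bounds $0<Q_{2m}+\delta\leq Q\leq\mu<1$ and a uniform gap $c\triangleq\inf_{j\geq2}\inf_{Q\in I_{m,\delta}}|\mu_{2j}(Q)|>0$ (for $k$ large, $|\mu_k(Q)|\geq \tfrac{k}{2}(1-\mu^2)-1\to\infty$, while the finitely many remaining $\mu_k$ are continuous and nonvanishing on the compact $I_{m,\delta}$). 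Solving the triangular recursion ($a_3=b_1/Q^2$, then $a_{2j+1}=Qa_{2j-1}+2b_j/\mu_{2j}(Q)$) produces the explicit inverse; its boundedness $C^{\alpha}(\T)\to C^{1+\alpha}(\T)$, uniformly in $Q\in I_{m,\delta}$, follows from the split $\tfrac{1}{\mu_{2j}(Q)}=\tfrac{-1}{j(1-Q^2)}+O(j^{-2})$ (absolutely summable remainder) combined with the convolution structure $a_{2j+1}=Q^{j-1}a_3+\sum_{l=2}^{j}Q^{j-l}\tfrac{2b_l}{\mu_{2l}(Q)}$ with $(Q^{n})\in\ell^1$, and the boundedness of the Szeg\"o projection on $C^\alpha(\T)$ — the same argument already carried out for Proposition \ref{assumptions}(3) and in \cite{19}. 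All constants depend only on $c$, on $1-\mu$ and on $\inf I_{m,\delta}>0$, so the invertibility is uniform over $Q\in I_{m,\delta}$.

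The main obstacle is exactly this uniformity in \textbf{(ii)}: one must verify that $I_{m,\delta}$ meets the singular set only at the \emph{non}-resonant value $Q_{2m+1}$, that this point does not obstruct invertibility on $X_2\to Y_2$, and then extract a spectral gap together with a genuine $C^{1+\alpha}$-bound (not merely $L^\infty$) on $\mathcal L_Q^{-1}$ that is independent of $Q$ over the compact interval. Once this is in hand, the symmetry reduction \textbf{(i)} and the parameter-dependent implicit function theorem with the gluing argument are routine.
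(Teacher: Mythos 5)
Your proposal is correct and follows essentially the same route as the paper: the implicit function theorem in $f$ with $(\EE,Q)$ as parameters plus a compactness/gluing argument over $I_{m,\delta}$, injectivity of $\mathcal L_Q\colon X_2\to Y_2$ because the only singular value $Q_{2m+1}$ in the interval has an even kernel vector $v_{2m+1}\notin X_2$, and surjectivity via the explicit triangular recursion with the Szeg\"o-projection/convolution estimate (your unrolled sum $a_{2j+1}=Q^{j-1}a_3+\sum_{l=2}^{j}Q^{j-l}\,2b_l/\mu_{2l}(Q)$ is exactly the paper's closed form $h=(h_1w^3+G(w))/(1-Qw^2)$). The uniform-in-$Q$ bound on the inverse that you flag as the main obstacle is handled in the paper by the same spectral-gap observation and the kernel bound $|(1+Q^{2n})/(1+Q^{2n}-(1-Q^2)n)|\le C/n$ uniformly on $I_{m,\delta}$.
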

\begin{proof}
The proof relies on the use of  the implicit function theorem.  First notice from \mbox{Proposition \ref{biendef}} that
for any   $\mu \in (0,1)$, the functional

 \begin{align*}
    F \colon (-1,1) \times (0, \mu)\times B_{r_{\mu}}^2    &\longrightarrow Y_2\\
         (\varepsilon,Q,f) &\longmapsto F(\varepsilon, Q,f)
 \end{align*}
    is well-defined and of class $C^1$, where 
    \begin{equation}\label{ballz}
    B_{r_{\mu}}^2=\big\{f\in X_2,\,  \text{ }\Vert f \Vert_{C^{1+\alpha}} \leq r_{\mu}\big\} \quad \hbox{and}\quad r_{\mu}=\frac{1-\mu}{2}.
 \end{equation}
 We point out that the persistence of two-fold symmetry follows from  Proposition \ref{string11}.
In addition, $D_f F(0,Q,0)$ is given by the restriction of the  operator $\mathcal{L}_Q$ described by \eqref{diff1} on the sub-space $X_2$. As we have seen in Proposition \ref{prop-spec11}, the kernel of $\mathcal{L}_Q$ is non-trivial if and only if $Q\in \mathcal{S}$. Since $Q\in I_{m,\delta}$ then $\hbox{Ker } D_f F(0,Q,0)$ is trivial for any $Q\neq Q_{2m+1}$, and for $Q=Q_{2m+1}$ the kernel is one-dimensional and generated by the vector $v_{2m+1}(w)=\frac{w^{2m+2}}{1-Q w^2}$.  However this vector does not belong to $X_2$ and consequently  $\hbox{Ker } D_f F(0,Q_{2m+1},0)$ is also trivial. Therefore for any $Q\in I_{m,\delta}$ the linear operator $D_f F(0,Q,0)\in \mathcal{L}( X_2, Y_2)$  is one-to-one. We check that it is also onto. Let $g=\sum_{n\geq1} g_n e_{2n}\in Y_2$ and consider finding the pre-image by $D_f F(0,Q,0)$. Then according to Proposition \ref{prop-spec11}, $h(w)=\sum_{n\geq1} h_n w^{2n+1}$ satisfies $D_f F(0,Q,0)h=g $ if and only if
\begin{equation}\label{Solvsyst}
g_1=2 Q^2 h_1\quad \hbox{and}\quad g_n=\Big(1+Q^{2n}-(1-Q^2) n \Big) \big(h_n-Q h_{n-1}\big), \forall n\geq 2.
\end{equation}
Note that for each $n$ the number $\big((1-Q^2) n-1-Q^{2n}\big)$ vanishes if and only if $Q=Q_{2n}$, and thus for $Q\in I_{m,\delta}$ this coefficient
does not vanish uniformly in $n$. One can see from the recursion relation that
$$
h(w)=\frac{h_1 w^3+G(w)}{1-Q w^2} \quad\hbox{with}\quad G(w)=\sum_{n\geq2}\frac{g_n}{1+Q^{2n}-(1-Q^2) n} w^{2n+1}.
$$
Since $Q\in (0,1)$ and $\frac{1}{1-Q w^2}$ is $C^\infty(\T)$, then $h\in X_2$ if and only if $G\in C^{1+\alpha}(\T)$. Thus it suffices to establish  $G^\prime\in C^{\alpha}(\T)$ or equivalently 
$$
\chi\, \colon w\in \T\mapsto  \sum_{n\geq2}\frac{n(1-Q^2)\, g_n}{1+Q^{2n}-(1-Q^2) n} w^{2n}\in C^\alpha(\T).
$$ 
It is plain that
\begin{eqnarray*}
\chi(w)&=&-\sum_{n\geq2} g_n w^{2n}+\sum_{n\geq1}\frac{1+Q^{2n}}{1+Q^{2n}-(1-Q^2) n} g_n w^{2n}\\
&=&-\Pi^+\big( 2ig(w)-g_1w^2\big)+K\star\Pi^+\big(2ig(w)-g_1w^2\big)
\end{eqnarray*}
with $\Pi^+$ being the Szeg\"{o} projection and 
$$
K(w)=\sum_{n\geq2}\frac{1+Q^{2n}}{1+Q^{2n}-(1-Q^2) n}  w^{2n}.
$$
It is easy to prove the existence of a constant $C>0$ such that for any $Q\in I_{m,\delta}$
$$
\Big|\frac{1+Q^{2n}}{1+Q^{2n}-(1-Q^2) n}\Big|\le \frac{C}{n}.
$$
Therefore $K\in L^2(\T)\subset L^1(\T)$, combined with the fact  $\Pi^+ g\in C^\alpha(\T)$, implies that  $\chi\in C^\alpha(\T)$. Finally we see that $D_f F(0,Q,0)$ is onto and thus it is an isomorphism from $X_2$ to $Y_2$. By the implicit function theorem and a standard compactness argument we conclude  the  existence of a unique surface of solutions 
\[F\big(\varepsilon, Q ,f(\EE,Q)\big)=0, \quad \forall \, (\EE,Q)\in  [-\EE_0,\EE_0]\times I_{m,\delta}.
\]
This achieves the proof of Theorem \ref{thmF2}.
\end{proof}
\subsubsection{Bifurcation from the two-fold curve}
In this subsection we prove the bifurcation of countable family of
one-dimensional curves of V-states from the curve constructed in
Theorem \ref{thmF2} at some points which are close to the points of
the non-resonant set $\mathcal{S}_{\textnormal{Nreso}}=\big\{Q_{2m+1},
m\geq 1\big\}$.

 The main result may be stated as follows.


\begin{theorem}
Let $m\geq3$ be an odd number. There exists $\EE_0>0$  such that for any $\EE\in [-\EE_0,\EE_0]$, there exists a one-dimensional curve of one-fold  V-states bifurcating from the two-fold branch constructed in Theorem $\ref{thmF2}$ at a point $Q_{\EE,m}$ close to $Q_{m}$.

\end{theorem}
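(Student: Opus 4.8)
The plan is to derive the theorem from the parametrized Crandall--Rabinowitz theorem (Theorem \ref{C-R}), taking $\EE$ as the perturbation parameter, a shifted copy of $Q$ as the bifurcation parameter, and perturbations of the two-fold branch as the unknown. Write $m=2k+1$ with $k\geq1$, so that $Q_m=Q_{2k+1}$ is an \emph{interior} point of the interval $I_{k,\delta}=[Q_{2k}+\delta,Q_{2k+2}-\delta]$ on which the two-fold branch $f(\EE,Q)$ of Theorem \ref{thmF2} lives (choose $\delta$ small using the strict monotonicity of $(Q_j)$). Define
\[
\widehat F(\EE,\lambda,g)\triangleq F\big(\EE,\,Q_m+\lambda,\,f(\EE,Q_m+\lambda)+g\big),
\]
with $g$ in a neighbourhood of $0$ in the space $X$ of \eqref{spaceX}. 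Since $f(\EE,Q)\in X_2\subset X$ is small for $\EE,\lambda$ small, Proposition \ref{biendef} shows $\widehat F$ maps into the space $Y$ of \eqref{spaceY}, and Theorem \ref{thmF2} gives $\widehat F(\EE,\lambda,0)=0$ for all small $\EE,\lambda$; thus the two-fold branch is the trivial family required by Theorem \ref{C-R}.

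Next I would check the hypotheses of Theorem \ref{C-R} at $(\EE,\lambda,g)=(0,0,0)$. The regularity hypotheses follow by composition, using that $F$ is $C^1$ with $\partial_Q\partial_fF$ and $\partial_f^2F$ continuous (Proposition \ref{biendef}) and that $(\EE,Q)\mapsto f(\EE,Q)$ is $C^1$ (Theorem \ref{thmF2}); in particular $\widehat F$ is $C^1$ and $\widehat F_{\lambda g}$ exists and is continuous. The key simplification is that at $\EE=0$ the two-fold branch is trivial: since $\alpha_Q$ is an exact rotating Euler patch, $F(0,Q,0)=0$, so by the uniqueness in Theorem \ref{thmF2} one has $f(0,Q)\equiv0$ on $I_{k,\delta}$, and hence $\partial_Q f(0,Q_m)=0$. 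A chain-rule computation then gives
\[
D_g\widehat F(0,0,0)=D_fF(0,Q_m,0)=\mathcal L_{Q_m},\qquad
\partial_\lambda D_g\widehat F(0,0,0)=\partial_Q\mathcal L_Q\big|_{Q=Q_m},
\]
so that Proposition \ref{prop-spec11} applies directly: $\textnormal{Ker}\,\mathcal L_{Q_m}=\langle v_m\rangle$ is one-dimensional with $v_m(w)=w^{m+1}/(1-Q_mw^2)$, $R(\mathcal L_{Q_m})$ is closed of codimension one in $Y$, and $\partial_Q\mathcal L_{Q_m}v_m\notin R(\mathcal L_{Q_m})$. All four hypotheses of Theorem \ref{C-R} are therefore satisfied.

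Theorem \ref{C-R} then produces $\EE_0>0$, an interval $(-a,a)$, and continuous functions $\psi\colon(-a,a)^2\to\R$ and $\phi\colon(-a,a)^2\to\mathcal X$ (with $\mathcal X$ a complement of $\langle v_m\rangle$ in $X$), vanishing at $(0,0)$, whose nontrivial zeros near the origin are exactly $\big(\EE,\psi(\EE,s),\,sv_m+s\phi(\EE,s)\big)$. Undoing the substitution, for each $|\EE|\leq\EE_0$ this is the one-parameter family
\[
s\longmapsto\Big(Q_m+\psi(\EE,s),\ \alpha_{Q_m+\psi(\EE,s)}+f\big(\EE,Q_m+\psi(\EE,s)\big)+s\,v_m+s\,\phi(\EE,s)\Big)
\]
of solutions of the V-state equation \eqref{Eq8}, which meets the two-fold branch of Theorem \ref{thmF2} at $s=0$, i.e.\ at $Q_{\EE,m}\triangleq Q_m+\psi(\EE,0)$, a point close to $Q_m$ since $\psi$ is continuous and $\psi(0,0)=0$ (shrink $\EE_0$ so that $Q_{\EE,m}\in I_{k,\delta}$). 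Because $v_m$ has only even Fourier modes while $\alpha_Q+f(\EE,Q)$ is two-fold symmetric, the patch with boundary $\alpha_Q+f(\EE,Q)+sv_m+s\phi(\EE,s)$ loses the two-fold symmetry for $s\neq0$: this is the desired curve of one-fold V-states.

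The step I expect to be the main obstacle is the regularity of the composite functional $\widehat F$, and specifically the existence and continuity of the mixed second derivative $\widehat F_{\lambda g}$: this requires combining the estimates of Proposition \ref{biendef} (regularity of $F$, including $\partial_Q\partial_fF$ and $\partial_f^2F$) with the $C^1$ dependence of the two-fold branch $(\EE,Q)\mapsto f(\EE,Q)$ from Theorem \ref{thmF2} through the chain rule in the Banach spaces $X,Y$, while checking that each composed derivative lands in the correct subspace. Once this bookkeeping is settled, the spectral hypotheses reduce verbatim to Proposition \ref{prop-spec11} thanks to the identity $f(0,\cdot)\equiv0$.
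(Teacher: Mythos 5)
Your proposal is correct and rests on the same mechanism as the paper's proof (a parametrized Crandall--Rabinowitz/Lyapunov--Schmidt argument around the two-fold branch, with the kernel, range and transversality data supplied by Proposition \ref{prop-spec11}), but it packages the reduction differently. The paper does \emph{not} translate the unknown: it runs the Lyapunov--Schmidt splitting $X=\langle v_m\rangle\oplus\mathcal X$ (choosing $\mathcal X\supset X_2$, which is possible precisely because $m$ is odd so $v_m$ is even) directly on $F(\EE,Q,f)$, solves the infinite-dimensional part by the implicit function theorem to get $g=\varphi(\EE,Q,s)$, and only then identifies the two-fold branch with the slice $s=0$ via the uniqueness $\varphi(\EE,Q,0)=f(\EE,Q)$; the finite-dimensional equation is then handled by the same division-by-$s$ trick as in the proof of Theorem \ref{C-R}. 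Your substitution $\widehat F(\EE,\lambda,g)=F(\EE,Q_m+\lambda,f(\EE,Q_m+\lambda)+g)$ instead turns the two-fold branch into the trivial family and lets Theorem \ref{C-R} be invoked as a black box; the price, which you correctly identify, is the chain-rule bookkeeping for $\widehat F_{\lambda g}$, and this is covered by Proposition \ref{biendef} (continuity of $\partial_Q\partial_fF$ and $\partial_f^2F$) together with the $C^1$ dependence of $f(\EE,Q)$, while the key cancellations ($D_g\widehat F(0,0,0)=\mathcal L_{Q_m}$ and $\partial_\lambda D_g\widehat F(0,0,0)=\partial_Q\mathcal L_Q|_{Q=Q_m}$) follow from $f(0,\cdot)\equiv0$, exactly as the paper uses $\varphi(0,Q,0)=0$. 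The two routes are mathematically equivalent; yours is slightly more economical in exposition, the paper's avoids composing $F$ with the branch and so needs marginally less regularity bookkeeping up front. One small point to make explicit: hypothesis (1) of Theorem \ref{C-R} is stated for all $(\EE,\lambda)$, whereas $\widehat F(\EE,\lambda,0)=0$ only holds on the small rectangle where Theorem \ref{thmF2} applies; since the proof of Theorem \ref{C-R} uses this vanishing only locally, this is harmless, but it deserves a sentence.
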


\begin{proof}

The proof follows the same lines of Theorem \ref{C-R} with slight
modifications using the Lyapunov-Schmidt reduction in an important
way.  First recall from Proposition \ref{biendef} that the functional
$F: (-1,1)\times(0,\mu)\times B_{r_\mu}\to Y$ is well-defined and is
of class $C^1,$ with
$$
    B_{r_{\mu}}=\big\{f\in X,\,  \text{ }\Vert f \Vert_{C^{1+\alpha}} \leq r_{\mu}\big\} \quad \hbox{and}\quad r_{\mu}=\frac{1-\mu}{2}.
$$
According to  Proposition  \ref{prop-spec11}, for $Q=Q_m$ the kernel of $\mathcal{L}_{Q_m}$ is generated by the vector $v_m(w)=\frac{w^{m+1}}{1-Q w^2}$. Denote by   $\mathcal{X}$    a  complement of $v_m$ in $X$ such that
$$
X_2\subset \mathcal{X}.
$$ 
This last fact follows since $m$ is odd and therefore the function
$v_m$ is even; consequently, we can choose a complement containing odd
functions which is exactly the space $X_2$. Recall that the spaces $X$
and $X_2$ were introduced in \eqref{spaceX} and \eqref{spaceX2}. The
range of $\mathcal{L}_{Q_m}$, denoted by $\mathcal{Y}$, is of
co-dimension one and we may choose a complement generated by the
vector $w_m=e_m$.  Then

$$
X=\langle v_m\rangle \oplus\mathcal{X}\quad  \hbox{and} \quad Y=\langle w_m\rangle \oplus\mathcal{Y}.
$$

Let  $\Pi_1: X\mapsto\langle v_m\rangle$ be  the projection along $\mathcal{X}$ onto $\langle v_m\rangle$. Thus 
$$
f=sv_m+ g, \quad g\in \mathcal X\Longrightarrow  \Pi_1f= sv_m,
$$
and similarly define the projection
$\Pi_2:  Y\mapsto \langle w_m\rangle$ along
$\mathcal{Y}$ onto $\langle w_m\rangle $. 
The V-state equation $F(\varepsilon,Q, f)=0$  is then equivalent to the system
$$
F_1(\EE,Q,s, g)\triangleq(\hbox{Id}-\Pi_2)F(\EE,Q,s v_m+g)=0
$$
and
$$
F_2(\EE,Q,s, g)\triangleq \Pi_2F(\EE,Q, sv_m+g)=0.
$$

The function $F_1:(-1,1)\times (0,\mu)\times(-\eta,\eta) \times B_r \to\mathcal{Y}$  is $C^1$, with $B_r$ a small ball in  $\mathcal{X}$  centered  at  $0$, and $\eta>0$ such that for any $s\in (-\eta,\eta)$ and for any $g\in B_r$ we have $sv_m+g\in B_{r_{\mu}}$. Moreover, 
$$
F_1(0,Q_m,0,0)=0
$$
and it is not difficult to check that 
\begin{equation*}
\partial_g F_1(0,Q_m,0,0)=(\hbox{Id}-\Pi_2)\partial_fF(0,Q_m,0):\mathcal{X}\to \mathcal{Y}
\end{equation*}
is an isomorphism.
By the implicit function theorem the solutions of the equation $F_1(\EE,Q,s,g)=0$ are described near the point $(0,Q_m,0,0)$ by the parametrization $g=\varphi(\EE,Q, s)$ with 
$$\varphi \colon (-\delta ,\delta)\times(Q_m-\delta, Q_m+\delta)\times(-\delta,\delta)\to\mathcal{X}, \quad \delta>0
$$ 
being a $C^1$ function. By virtue of  Theorem \ref{thmF2}, we know the existence of a function  $$(\EE,Q)\mapsto f(\EE,Q)\in X_2\subset \mathcal{X}$$ such that
\begin{equation}\label{Eq0xxF}
F(\EE,Q,f(\EE,Q))=0
\end{equation} 
and so in particular
$$
F_1(\EE,Q,0,f(\EE,Q))=0.
$$
Thus by uniqueness we get
\begin{equation}\label{Eq3F}
\varphi(\EE,Q,0)=f(\EE,Q), \quad \forall (\EE,Q)\in  (-\delta ,\delta)\times(Q_m-\delta, Q_m+\delta).
\end{equation}
As Kirchhoff ellipses are exact solutions for $\EE=0$, we obtain
\begin{equation}\label{Eq5F}
\varphi(0,Q,0)=0, \quad \forall Q\in  (Q_m-\delta, Q_m+\delta).
\end{equation}
The equation of $F_2$  in a neighbourhood of $(0,Q_m,0,0)$ takes the form 

\begin{equation*}
 \widehat{F_2}(\EE,Q,s)\triangleq \Pi_2F\big(\EE, Q, s v_m+\varphi(\EE,Q,s )\big)=0, \quad \forall\,  |\EE| , |Q-Q_m|,|s|<\delta.
\end{equation*}
From the relations \eqref{Eq0xxF}  and \eqref{Eq3F} we deduce that 
$$
\widehat{F_2}(\EE,Q,0)=0, \quad \forall |\EE| \le\delta, \forall \,|Q-Q_m|\le\delta.
$$
Set
\begin{equation}
\widehat{g}(\EE,Q,s)\triangleq\left\{ \begin{array}{ll}
 \frac{\widehat{F_2}(\EE,Q,s)}{s}, \quad s\neq0\\
 \Pi_2\partial_fF\big(\EE, Q,\varphi(\EE,Q,0)\big)\big(v_m+ \partial_s \varphi(\varepsilon,Q,0)\big),\quad s=0.\end{array}\right.
\end{equation}
Then the function  $\widehat{g}$ is continuous and 
\begin{eqnarray*}
\widehat{g}(0,Q_m,0)&=& \Pi_2\partial_fF(0,Q_m,0)\big(v_m+\partial_s \varphi(0,Q_m,0)\big)\\
&=&0.
\end{eqnarray*}
Indeed, by differentiating with respect to $s$ the following equation 
\[ F_1(\varepsilon, Q,s,\varphi(\varepsilon,Q,s))=0, \, \forall \vert \varepsilon \vert, \vert Q-Q_m \vert, \vert s \vert \leq \delta\]
 at the point  $(0,Q_m,0)$, we find
 
 \[ (\hbox{Id}-\Pi_2)\partial_f F(0,Q_m,0)(v_m+\partial_s \varphi(0,Q_m,0))=0.\]
 Consequently, $\partial_s \varphi(0,Q_m,0) \in \textnormal{Ker}(\mathcal{L}_{Q_m}) \cap \mathcal{X}$, and therefore  
 \[\partial_s \varphi(0,Q_m,0) =0.\]
 Thanks to $(\ref{Eq5F})$, we obtain
 \[\partial_Q \varphi(0,Q_m,0)=0.\]
Moreover,  $\widehat{g}$ is differentiable with respect to $Q$ and
\begin{eqnarray*}
\partial_{Q} \widehat{g}(0,Q_m,0)&=&\Pi_2\partial_Q\partial_fF(0,Q_m,0)\big(v_m+\partial_s \varphi(0,Q_m,0)\big)\\
&+&\Pi_2\partial_fF(0,Q_m,0)\big(\partial_Q{v_m}_{|Q=Q_m}+\partial_Q\partial_s \varphi(0,Q_m,0)\big)\\
&+&\Pi_2 \partial_f^2 F(0,Q_m,0)\big(v_m+\partial_s \varphi(0,Q_m,0), \partial_Q \varphi(0,Q_m,0)\big)\\
&=&\Pi_2\partial_Q \partial_f F(0,Q_m,0)(v_m).
\end{eqnarray*}

From the transversality assumption proved in Proposition \eqref{prop-spec11}, we obtain
\[
\partial_{Q} \widehat{g}(0,Q_m,0)\neq0.
\]
Hence using a weak version of the implicit function theorem, see \cite{CR}, we deduce   that the solutions of $\widehat{g}(\EE,Q,s)=0$ near the point $(0,Q_m,0)$ are parametrized by a continuous  surface  $\gamma:(-\EE_0,\EE_0)^2\to \R$ such that $Q=\gamma(\EE,s)$ and
$$
\widehat{g}\big(\EE,\gamma(\EE,s),s\big)=0, \forall |\EE| \le\EE_0,\,\forall  |s|\le\EE_0,\quad \hbox{with}\quad  \EE_0>0.
$$
Therefore the solutions  of the equation $F(\EE,Q,f)=0$ near the point $(0,Q_m,0)$  are given by the union $\mathcal{C}_1\cup\mathcal{C}_2$ where 
$$
\mathcal{C}_1=\Big\{\big(\EE,Q,\varphi(\EE,Q,0)\big), |\EE| \le \EE_0, |Q-Q_m|\leq\EE_0\Big\}
$$
corresponding to the two-fold V-states constructed in Theorem \ref{thmF2} and 
$$
\mathcal{C}_2=\Big\{\big(\EE,\gamma(\EE,s), sv_m+\varphi\big(\EE,\gamma(\EE,s),s\big)\big), |\EE| \leq\EE_0, |s|\leq  \EE_0\Big\}.
$$
Note that the curve $\mathcal{C}_2$ is different from $\mathcal{C}_1$ since for  $s\neq0$ the V-state parametrized by 
$$w\in\mathbb{T}\mapsto  w+Q\overline{w}+s v_m(w)+\varphi\big(\EE,\gamma(\EE,s),s\big)
$$
is not two-fold because $m$ is odd and therefore $v_m$ is a  non-vanishing even function. In addition the curve $\mathcal{C}_2$ intersects $\mathcal{C}_1$ at $s=0$. This achieves the proof.
\end{proof}

\subsection{Breakdown of the bifurcation diagram close to the resonant set }
The numerical study conducted in Section \ref{sec:num2} shows that,
contrary to what occurs in the Euler equations, the two-fold branch is
never connected for small $\varepsilon$ and is split into countable
disjoint connected components or branches. The separation of the
singularity set seems to happen around the resonant set
$\mathcal{S}_{\textnormal{reso}}=\big\{Q_{2m}, m\geq2\big\}$ due to
the resonance between branches with the same symmetry. We provide 
an analytical confirmation of this behavior only around the point
$Q_4$ which is more tractable than the remaining cases $Q_{2m},
m\geq3$. We point out that the separation of the two-fold branch
around $Q_{4}$ is only proved locally in the bifurcation diagram.
The global structure of this separation is much more
complicated and may require more elaborate tools.\\

More precisely, we prove the following result.
\begin{theorem}\label{thmbreak}
Consider  the V-state equation  \eqref{Eq8}. There exists $\varepsilon_0>0$ such that for any $\varepsilon\in (-\EE_0,\varepsilon_0)\backslash\{0\}$ there exists $r_\varepsilon>0$ such that the set
$$
\Big\{F\big(\varepsilon, Q ,f\big)=0, |Q-Q_4|<r_\varepsilon, f\in X_2, \|f\|_{C^{1+\alpha}}<r_\varepsilon\big\}
$$
is given by the union of two disjoint  one-dimensional curves.

\end{theorem}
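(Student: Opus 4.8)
The plan is to run a Lyapunov--Schmidt reduction of the V-state equation \eqref{Eq8} near the degenerate point $(0,Q_4,0)$, working throughout in the two-fold spaces $X_2,Y_2$ of \eqref{spaceX2}--\eqref{spaceY2}, and then to read the geometry of the zero set off the resulting \emph{scalar} reduced equation, viewed as an imperfect bifurcation with unfolding parameter $\EE$. Fix $\mu\in(Q_4,1)$; by Proposition \ref{biendef} the map $F:(-1,1)\times(0,\mu)\times B_{r_\mu}^2\to Y_2$ is $C^1$ with the higher $(Q,f)$-derivatives used below continuous, and by Proposition \ref{prop-spec11}, since $Q_4\in\mathcal S_{\textnormal{reso}}$ (see \eqref{Resset}), the operator $\mathcal L_{Q_4}=D_fF(0,Q_4,0)$ restricted to $X_2$ has one-dimensional kernel spanned by $v_4(w)=\frac{w^5}{1-Q_4w^2}$ --- which \emph{does} lie in $X_2$, its power series containing only odd powers of $w$ --- and range $\mathcal Y\triangleq R(\mathcal L_{Q_4})$ of codimension one in $Y_2$ with complement $\langle e_4\rangle$, together with the transversality $\partial_Q\mathcal L_Q v_4|_{Q=Q_4}\notin\mathcal Y$. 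Writing $X_2=\langle v_4\rangle\oplus\mathcal X$, $Y_2=\langle e_4\rangle\oplus\mathcal Y$ and letting $\Pi$ be the projection of $Y_2$ onto $\langle e_4\rangle$ along $\mathcal Y$, I substitute $f=sv_4+g$ with $g\in\mathcal X$ and split $F=0$ into $(\mathrm{Id}-\Pi)F=0$ and $\Pi F=0$. Exactly as in the proof of Theorem \ref{C-R}, the implicit function theorem solves the first equation near $(0,Q_4,0,0)$ by $g=\varphi(\EE,Q,s)$ of class $C^1$; since $\alpha_Q$ is an exact Euler solution, $\varphi(0,Q,0)=0$, and the standard Crandall--Rabinowitz computation then gives $\partial_s\varphi(0,Q_4,0)=0$. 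There remains the reduced scalar equation
\[
\Phi(\EE,Q,s)\triangleq\Pi\,F\big(\EE,Q,\,sv_4+\varphi(\EE,Q,s)\big)=0 .
\]

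At $\EE=0$ one has $\Phi(0,Q,0)=\Pi F(0,Q,0)=0$, so $\Phi(0,Q,s)=s\,\Psi(Q,s)$ with $\Psi(Q,s)=\int_0^1\partial_s\Phi(0,Q,ts)\,dt$; the facts just recalled give $\Psi(Q_4,0)=\partial_s\Phi(0,Q_4,0)=0$, while the transversality assumption yields $\partial_Q\Psi(Q_4,0)=\Pi\,\partial_Q\partial_fF(0,Q_4,0)v_4\neq0$. Hence the zero set of $\Phi(0,\cdot,\cdot)$ near $(Q_4,0)$ is the union of the Kirchhoff line $\{s=0\}$ and a transverse $C^1$ curve through $(Q_4,0)$ --- the two-fold branch bifurcating from the ellipse at $Q_4$, cf.\ \cite{19} --- the two \emph{meeting} at $(Q_4,0)$. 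Whether this crossing is transcritical or a genuine pitchfork (i.e.\ whether $\partial_s^2\Phi(0,Q_4,0)=0$) is settled in \cite{19}, and the final conclusion will not depend on which.

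The crux is the \emph{breaking coefficient}
\[
\mu(\EE)\triangleq\Phi(\EE,Q_4,0)=\Pi\,F\big(\EE,Q_4,\varphi(\EE,Q_4,0)\big),
\]
which vanishes at $\EE=0$: everything hinges on proving $\mu(\EE)\neq0$ for $0<|\EE|<\EE_0$. Only the nonlocal term of $F$ carries $\EE$-dependence, and by \eqref{FF1} its leading correction is the explicit term $-\tfrac{\EE^2x^2}{4}\big(\log(|\EE|/2)+\log x\big)\mathcal K_1(\EE^2x^2)+\big(\mathcal K_2(\EE^2x^2)-\mathcal K_2(0)\big)$, of size $O(\EE^2\log|\EE|)$. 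Writing $F(\EE,Q_4,f)=F(0,Q_4,f)+\EE^2\log|\EE|\,A_1(f)+\EE^2A_2(f)+o(\EE^2)$ with $A_1,A_2$ explicit, and expanding $\varphi(\EE,Q_4,0)=\EE^2\log|\EE|\,\varphi_1+\EE^2\varphi_2+o(\EE^2)$ (the $\varphi_i\in\mathcal X$ obtained by inverting $(\mathrm{Id}-\Pi)\mathcal L_{Q_4}$), one checks that the $\varphi_i$-contributions are killed by $\Pi\mathcal L_{Q_4}$, so that
\[
\mu(\EE)=\EE^2\log|\EE|\;\Pi A_1(0)+\EE^2\,\Pi A_2(0)+o(\EE^2),
\]
where $\Pi A_i(0)$ is, up to normalisation, the $e_4$-Fourier coefficient of $A_i$ evaluated on the bare ellipse $\alpha_{Q_4}$. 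Computing this coefficient --- by inserting $\alpha_{Q_4}(w)=w+Q_4/w$ and $|\alpha_{Q_4}(w)-\alpha_{Q_4}(\tau)|^2=|w-\tau|^2|1-Q_4/(w\tau)|^2$ and evaluating the residual integrals through the Nicholson-type identities \eqref{Nichols}--\eqref{Form34} --- gives $\mu(\EE)=c_\star\EE^2\log|\EE|+O(\EE^2)$ (or $c_\star\EE^2+o(\EE^2)$ should the logarithmic part cancel after projection onto $e_4$), with $c_\star$ an explicitly computable constant. Showing $c_\star\neq0$ is the genuine obstacle: this \emph{is} the resonance at the even mode $m=4$ (at $\EE=0$ the same coefficient vanishes, which is what produces the crossing above, whereas its $\EE$-leading part does not), and it demands the cumbersome Bessel-function bookkeeping mentioned in the introduction --- feasible for $m=4$ but not for the general resonant index $Q_{2m}$, $m\geq3$.

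Granting $\mu(\EE)\neq0$, set $\xi=Q-Q_4$; near $(0,0)$ the reduced equation has the normal form
\[
\Phi(\EE,Q,s)=\mu(\EE)+c_1\,\xi s+\big(c_2s^2+c_3s^3\big)+\textnormal{(higher-order terms)},\qquad c_1\neq0,
\]
with $c_2\neq0$ in the transcritical case and $c_2=0$, $c_3\neq0$ in the pitchfork case. In either case this is the universal unfolding of the corresponding singularity, so by the classification of imperfect bifurcations \cite{Glob} its zero set, for $\mu(\EE)\neq0$ and in a neighbourhood $r_\EE$ shrinking with $\EE$, is the union of two disjoint $C^1$ curves (the two branches of a hyperbola in the transcritical case, the ``broken pitchfork'' in the other). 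One may also see this directly: rescaling $(s,\xi)$ by the appropriate powers of $|\mu(\EE)|$ --- $s,\xi\sim|\mu(\EE)|^{1/2}$ in the transcritical case, $s\sim|\mu(\EE)|^{1/3}$, $\xi\sim|\mu(\EE)|^{2/3}$ in the pitchfork case --- and dividing by $\mu(\EE)$, the remainders become $o(1)$ uniformly on compact sets and the equation converges to a fixed conic $\pm1+c_1\eta\sigma+c_2\sigma^2=0$, resp.\ cubic $\pm1+c_1\eta\sigma+c_3\sigma^3=0$, whose zero set is two disjoint smooth arcs not meeting $\{\sigma=0\}$; the implicit function theorem handles the $o(1)$ perturbation. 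Undoing the rescaling, the zero set of $\Phi(\EE,\cdot,\cdot)$ --- hence, via $f=sv_4+\varphi(\EE,Q,s)$, the set in Theorem \ref{thmbreak} --- is the union of two disjoint one-dimensional curves in a ball of radius $r_\EE$ about $(Q_4,0)$, as claimed. The only genuinely hard part is the non-vanishing of $c_\star$; everything else is soft.
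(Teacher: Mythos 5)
Your skeleton is essentially the paper's: a Lyapunov--Schmidt reduction in $X_2,Y_2$ at $(0,Q_4,0)$ followed by an analysis of the reduced scalar equation as an unfolded quadratic, with the "breaking term" $\mu(\EE)\sim a\EE^2$ playing exactly the role of the coefficient $\partial_\EE^2\psi(0,Q_4,0)$ in the paper. But there is a decisive gap, and you name it yourself: you never compute the breaking coefficient, nor the coefficient of $s^2$. The entire content of the theorem is that $\Pi\,\mathrm{Im}\{F_1(Q_4,0)\}\neq 0$ (equivalently your $\Pi A_2(0)\neq0$; the paper finds $a=\tfrac{Q_4^2}{48}(Q_4^4-4Q_4^2+3)>0$) \emph{and} that $\partial_s^2\psi(0,Q_4,0)\neq0$ (the paper's lengthy residue/Maple computation giving $c=-\tfrac{Q_4^2(3Q_4^6+Q_4^4-5Q_4^2+5)}{(Q_4^2-1)^2}<0$); together with the transversality coefficient $b$ these make the reduced equation a nondegenerate conic $a\EE^2+bs(Q-Q_4)+cs^2+O(3)=0$, whose two branches are then separated by a fixed-point/contraction argument. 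Deferring both computations as "the genuine obstacle" and "settled in \cite{19}" leaves the theorem unproved: \cite{19} does not supply $\partial_s^2\psi$, and if either coefficient vanished the branches could still cross for $\EE\neq0$, exactly as happens in the non-resonant (odd) case. A proof attempt for this statement that stops short of these computations has no content beyond the soft reduction.

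Two secondary points. First, the paper neutralizes the $\EE^2\log|\EE|$ singularity \emph{before} reducing, by observing that $\fint_\T|\Phi(\tau)-\Phi(w)|^2\Phi'(\tau)\,d\tau=-\Phi(w)\fint_\T\overline{\Phi(\tau)}\Phi'(\tau)\,d\tau$ for a centered two-fold patch, so the logarithmic term is a multiple of the rotation term and is absorbed into a redefinition $\Omega_\EE$; this makes $\widehat F$ at least $C^3$ and the third-order Taylor remainder controllable. Your direct expansion $F=F(0)+\EE^2\log|\EE|\,A_1+\EE^2A_2+o(\EE^2)$ can be made to work (and indeed $\Pi A_1(0)=0$, since $A_1(0)$ is proportional to $e_2$), but your assertion that "the remainders become $o(1)$ uniformly" after rescaling is not justified with only the $C^1$ regularity in $\EE$ that $F$ itself possesses; you would need to track the mixed $\EE^2\log|\EE|\cdot s$, $\EE^2\log|\EE|\cdot(Q-Q_4)$ terms explicitly. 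Second, you appeal to the Golubitsky--Schaeffer classification without verifying the recognition conditions for the singularity of $s\Psi(Q,s)$ and without checking that the full kernel/range decomposition of Proposition \ref{prop-spec11} restricts correctly to $X_2,Y_2$ (surjectivity of $\mathcal L_{Q_4}|_{X_2}$ onto $R(\mathcal L_{Q_4})\cap Y_2$); both are fixable but are part of a complete argument.
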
 
\begin{proof}
The proof is based on studying the local structure of the V-state
equation \eqref{tildeG_j} through the associated quadratic form. At
first sight there is a logarithmic singularity in $\EE$ at second order
which could present difficulties for understanding the local
structure. However, as shown below, this term may be combined
with the rotation term and therefore it does not contribute at the
nonlinear level. To show this, we first make some transformations 
using new unknowns. From \eqref{FF1} we may write for $x>0$ the
expansion
\[K_0^\EE(x)=\psi(1)-\log(x)-\frac{\EE^2}{4} x^2\log(x)+\frac{\EE^2}{4}\big(\psi(2)-\log(|\EE | /2)\big) x^2+ \EE^4\log\EE\,  \mathcal{R}^\EE(x)
\]
where $\mathcal{R}^\EE$ is at least of class $C^3$ in the variable $x$ and analytic in the variable $\EE$.
It follows that 
\begin{eqnarray*}
\fint_{\T}\Phi^\prime(\tau)K_0^\EE(|\Phi(\tau)-\Phi(w)|) d\tau&=&-\fint_{\T}\Phi^\prime(\tau)\log(|\Phi(\tau)-\Phi(w)|) d\tau\\
&+&\frac{\EE^2}{4}\big(\psi(2)-\log(| \EE | /2)\big) \fint_{\T}\Phi^\prime(\tau)|\Phi(\tau)-\Phi(w)|^2 d\tau\\
&-&\frac{\EE^2}{4} \fint_{\T}\Phi^\prime(\tau)\log(|\Phi(\tau)-\Phi(w)|) |\Phi(\tau)-\Phi(w)|^2 d\tau\\
&+&\EE^4\log|\EE|   \fint_{\T}\Phi^\prime(\tau)\mathcal{R}^\EE(|\Phi(\tau)-\Phi(w)|) d\tau.
\end{eqnarray*}
Denote $\Gamma=\Phi(\T)$; then it is simple to obtain, by a change of variables and the residue theorem,
\begin{eqnarray*}
 \fint_{\T}\Phi^\prime(\tau) |\Phi(\tau)-\Phi(w)|^2 d\tau&=&\fint_{\Gamma}|\xi-\Phi(w)|^2 d\xi\\&=&
 \fint_{\Gamma}\big(|\xi|^2-\Phi(w)\overline{\xi}\big) d\xi.
\end{eqnarray*}
From Cauchy-Pompeiu's formula we find
\[\fint_{\Gamma}|\xi|^2 d\xi=\frac1\pi\int_{D}\xi d A,
\]
and since the domain $D$ delimited by the curve $\Gamma$ is two-fold and  centered at the origin then
\[\fint_{\Gamma}|\xi|^2 d\xi=0.
\]
Hence
\begin{eqnarray*}
 \fint_{\T} |\Phi(\tau)-\Phi(w)|^2 \Phi^\prime(\tau) d\tau&=& -\Phi(w)\fint_{\T}\overline{\Phi(\tau)}\Phi^\prime(\tau) d\tau.
\end{eqnarray*}
Thus we obtain
\begin{eqnarray*}
\fint_{\T}K_0^\EE(|\Phi(\tau)-\Phi(w)|) \Phi^\prime(\tau) d\tau&=&-\fint_{\T}\log(|\Phi(\tau)-\Phi(w)|) \Phi^\prime(\tau) d\tau\\
&-&\frac{\EE^2}{4}\big(\psi(2)-\log(| \EE | /2)\big) \Phi(w)\fint_{\T}\overline{\Phi(\tau)}\Phi^\prime(\tau) d\tau\\
&-&\frac{\EE^2}{4} \fint_{\T}\log(|\Phi(\tau)-\Phi(w)|) |\Phi(\tau)-\Phi(w)|^2 \Phi^\prime(\tau)d\tau\\
&+&\EE^4\log|\EE|   \fint_{\T}\mathcal{R}^\EE(|\Phi(\tau)-\Phi(w)|)  \Phi^\prime(\tau)d\tau.
\end{eqnarray*}
Inserting this identity into \eqref{tildeG_j} and using  \eqref{Eq71} we find
\begin{eqnarray*}
G(\EE,\Omega, \Phi)&=&-G_E(\Omega_\EE,\Phi)+\frac{\EE^2}{4} \hbox{Im}\{ G_1(\Phi)\}+\EE^4\log|\EE| \,\hbox{Im}\{G_2(\EE,\Phi)\}
\end{eqnarray*}
with
\[\Omega_\EE\triangleq\Omega+\frac{\EE^2}{4}\big(\psi(2)-\log(|\EE | /2)\big) \fint_{\T}\overline{\Phi(\tau)}\Phi^\prime(\tau) d\tau,
\]
\[
G_1(\Phi)\triangleq\overline{w}\,\overline{\Phi^\prime(w)}  \fint_{\T}\log(|\Phi(\tau)-\Phi(w)|) |\Phi(\tau)-\Phi(w)|^2 \Phi^\prime(\tau)d\tau
\]
and
\[G_2(\EE,\Phi)\triangleq  -\overline{w}\,\overline{\Phi^\prime(w)}\fint_{\T}\Phi^\prime(\tau)\mathcal{R}^\EE(|\Phi(\tau)-\Phi(w)|) d\tau.
\]
As before we look for solutions of the form
\[
\Phi=\alpha_Q+ f,\,  f\in X_2.
\]
Note that the space $X_2$ was introduced previously in  \eqref{spaceX2}.
We now impose the constraint
\[
\Omega_\EE=\frac{1-Q^2}{4}.
\]
Set 
\[
 F_E(Q,f)\triangleq-G_E\big({(1-Q^2)}/{4}
,\alpha_Q+f\big),\quad  F_1(Q,f)\triangleq G_1(\alpha_Q+f)
\]
and
\[
  F_2(\EE,Q,f)\triangleq G_2(\EE,\alpha_Q+f).
\]
Then the V-state equation is equivalent to
\begin{equation}\label{V-st2}
\widehat{F}(\EE,Q,f)\triangleq F_E(Q,f)+\frac{\EE^2}{4}\hbox{Im}\{F_1(Q,f)\}+\EE^4\log(|\EE|)\,  \hbox{Im}\{F_2(\EE,Q,f)\}=0.
\end{equation}
 Following the same lines of  Proposition \ref{biendef} we can check that 
\begin{align*}
    \widehat{F} \colon (-1,1) \times (0, \mu)\times B_{r_{\mu}}    &\longrightarrow Y_2\\
         (\varepsilon,Q,f) &\longmapsto \widehat{F}(\varepsilon, Q,f)
 \end{align*}
    is well-defined and of class $C^1$, where the ball $ B_{r_{\mu}}$ was previously defined in \eqref{ballz}.
    In fact, we can easily check that $\widehat{F}$ is at least $C^3$. Moreover, $\partial_Q \widehat{F}$ and $D_f  \widehat{F}$ are also $C^3$.
Note that from \eqref{Eq8} we have
\[
D_f \widehat{F}(0,Q,0)=D_fF_E(Q,0)=\mathcal{L}_Q,
\]
and the full structure of $\mathcal{L}_Q$ was previously detailed in Proposition \ref{prop-spec11}. Recall in particular that for $Q=Q_4$ one has
\[\hbox{Ker} \mathcal{L}_{Q_4}=\langle v_4\rangle,\, v_4(w)=\frac{w^5}{1-Q_4 w^2}
\]
and
\[
 R(\mathcal{L}_{Q_4})=\Big\{ g\in C^\alpha(\mathbb{T}), g= \sum_{n\geq1\\\atop n\neq 2}g_{n}e_{2n}, \quad g_n \in \mathbb{R} \Big\}.
\]
Let $\widehat{X}_2$ be a complement of $v_4$ in $X_2$, that is,
\[
X_2=\widehat{X}_2\oplus \langle v_4\rangle.
\] 
Thus any $f\in X_2$ admits  a unique decomposition in the form
$f=s v_4+g$ with $g\in \widehat{X}_2$ and $ s\in\R$.
Denote by $\widehat{Y}_2$ the space $R(\mathcal{L}_{Q_4})$, then 
\[
Y_2=\widehat{Y}_2\oplus \langle e_4\rangle,
\]
and let $\Pi: Y_2\to \langle e_4\rangle$ be the canonical projection along $\widehat{Y}_2$. Thus equation \eqref{V-st2} is equivalent to 
\begin{equation}\label{Eqw12}
H_1(\EE,Q,s,g) \triangleq \big(\hbox{Id}-\Pi\big)\widehat{F}(\EE,Q,s v_4+g)=0\quad\hbox{and}
\quad H_2(\EE,Q,s,g) \triangleq \Pi\widehat{F}(\EE,Q,s v_4+g)=0.
\end{equation}
Now we can directly observe that
\[
\partial_g H_1(0,Q_4,0,0)=\big(\hbox{Id}-\Pi\big)\mathcal{L}_{Q_4},
\]
and that $\partial_g H_1(0,Q_4,0,0): \widehat{X}_2\to \widehat{Y}_2$ is an isomorphism.  Thus, by the implicit function theorem, the solutions of the equation $H_1(\EE,Q,s,g)=0$ are described near the point $(0,Q_4,0,0)$ by the parametrization $g=\varphi(\EE,Q, s)$ with 
\begin{equation}\label{phi}
\varphi \colon (-\delta ,\delta)\times(Q_4-\delta, Q_4+\delta)\times(-\delta,\delta)\to\widehat{X}_2, \quad \delta>0
\end{equation}
being a $C^1$ function. As the functions defining the V-states are smooth enough, $\varphi$ is in fact at least of class $C^3$ . Notice that the defect of regularity  comes only from the variable $\EE$ and thus one gets more smoothness in the remaining variables. Indeed, we have the following. 
\begin{remark}\label{rmkk1}
The functions $\partial_Q \varphi$ and $\partial_s  \varphi$ are at least $C^3$.
To prove this, we just differentiate  the  equation
\[ \big(\textnormal{Id}-\Pi\big)\widehat{F}(\EE,Q,sv_4+ \varphi(\EE,Q,s))=0, \; \forall ( \EE,Q,s)\in (-\delta ,\delta)\times(Q_4-\delta, Q_4+\delta)\times(-\delta,\delta)
\]
and argue by induction.
\end{remark}
  Now since the ellipses are solutions, then by uniqueness we obtain
\begin{equation}\label{zz123}\varphi(0,Q,0)=0,\forall Q\in (Q_4-\delta, Q_4+\delta).
\end{equation}
This implies that for any $k\in\{1,2,3\}$ 
\begin{equation}\label{Eqsw136}
\partial_Q^k\varphi(0,Q,0)=0, \forall Q\in (Q_4-\delta, Q_4+\delta).
\end{equation}
On the other hand
differentiating the first equation in \eqref{Eqw12} with respect to $s$ we
obtain
\begin{eqnarray*}
\big(\hbox{Id}-\Pi\big)\big(\mathcal{L}_{Q_4}(v_4+\partial_s\varphi(0,Q_4,0))
&=&0
\end{eqnarray*}
and as $\partial_s\varphi(0,Q_4,0)\in\widehat{X}_2$  we deduce 
\begin{equation}\label{Eqsw135}
\partial_s\varphi(0,Q_4,0)=0.
\end{equation}
Similarly, differentiating the first equation of \eqref{Eqw12}  with respect to $\EE$, we obtain (due to \eqref{V-st2})
 \[
 \big(\hbox{Id}-\Pi\big)\mathcal{L}_{Q_4}\partial_\EE\varphi(0,Q_4,0)=0
 \]
 which implies that
 \begin{equation}\label{Eqsw134}
 \partial_\EE\varphi(0,Q_4,0)=0.
 \end{equation}
Now  the V-state equation reduces in this small neighbourhood to the finite-dimensional equation
\begin{equation}\label{Eqww1}
\psi(\EE,Q,s)\triangleq H_2\big(\EE,Q,s,\varphi(\EE,Q, s)\big)=0.
\end{equation}
It is obvious from  \eqref{V-st2}
 that 
\begin{equation}\label{Idenz1}
\psi(\EE,Q,s)=\psi_E(\EE,Q,s)+\frac{\EE^2}{4} \psi_1(\EE,Q,s)+\EE^4\log|\EE|\, \psi_2(\EE,Q,s)
\end{equation}
with
\[
\psi_E(\EE,Q,s) \triangleq \Pi F_E\big(Q,sv_4+\varphi(\EE,s,Q)\big),\,\, \psi_1(\EE,Q,s)\triangleq \Pi\,  \hbox{Im}\big \{F_1\big(Q,sv_4+\varphi(\EE,Q,s)\big)\big\}
\]
and
\[
\psi_2(\EE,Q,s)\triangleq\Pi\, \hbox{Im}\, \big\{F_2\big(\EE,Q,sv_4+\varphi(\EE,Q,s)\big)\big\}.
\]
Moreover,
\[\psi(0,Q,0)=0,\forall Q\in (Q_4-\delta, Q_4+\delta)
\]
which implies that for any $k\in\{1,2,3,4 \}$
\begin{equation}\label{Eqsw137}
\partial_Q^k\psi(0,Q,0)=0, \forall Q\in (Q_4-\delta, Q_4+\delta).
\end{equation}
Moreover straightforward computations yield, in view of \eqref{Eqsw134} and the structures of $\Pi$ and $\mathcal{L}_{Q_4}$,
\[
\partial_\EE\psi_E(0,Q_4,0)=\Pi \mathcal{L}_{Q_4}\partial_\EE\varphi(0,Q_4,0)=0.
\]
Using once again  \eqref{Eqsw134} we find
\begin{eqnarray*}
\partial_\EE^2\psi_E(0,Q_4,0)&=&\Pi\partial^2_fF_E(Q_4,0)\big(\partial_\EE\varphi(0,Q_4,0),\partial_\EE\varphi(0,Q_4,0)\big)+\Pi \mathcal{L}_{Q_4}\partial_\EE^2\varphi(0,Q_4,0)\\
&=&0.
\end{eqnarray*}
Hence we obtain
\begin{eqnarray*}
\partial_\EE^2\psi(0,Q_4,0)&=&\frac12\psi_1(0,Q_4,0)\\
&=&\frac12\Pi \, \hbox{Im}\{F_1(Q_4,0)\}.
\end{eqnarray*}
To compute the projection, we need to calculate  the coefficients of $w^4$ and $\overline{w}^4$ in the Fourier expansion of  $F_1(Q_4,0)$. First we have
\begin{eqnarray*}
F_1(Q_4,0)&=&\overline{w}\,\overline{\alpha_{Q_4}^\prime(w)} I(w)\\
&=&\big(\overline{w}-Q_4w\big) I(w)
\end{eqnarray*}
with 
\begin{eqnarray*}
I(w)&\triangleq&  \fint_{\T}\log(|\alpha_{Q_4}(\tau)-\alpha_{Q_4}(w)|) |\alpha_{Q_4}(\tau)-\alpha_{Q_4}(w)|^2 \alpha_{Q_4}^\prime(\tau)d\tau.
\end{eqnarray*}
Using the  identity
\[
|\alpha_{Q_4}(\tau)-\alpha_{Q_4}(w)|=|\tau-w||1-Q_4\tau w|, \quad \forall \tau, w\in\T
\]
we find after 
straightforward computations 
\begin{eqnarray}\label{Iden11}
 |\alpha_{Q_4}(\tau)-\alpha_{Q_4}(w)|^2 \alpha_{Q_4}^\prime(\tau)&=&\sum_{k=0}^2\alpha_k(w)\tau^k+\sum_{k=1}^4\beta_k(w)\overline{\tau}^k
 \end{eqnarray}
 with
 \begin{eqnarray*}
\alpha_0(w)\triangleq2+Q_4^2+Q_4(w^2+\overline{w}^2),\quad \alpha_1(w)&\triangleq&-2Q_4w-(1+Q_4^2)\overline{w},\quad \alpha_2(w)\triangleq Q_4,
 \end{eqnarray*}

\begin{eqnarray*}
\beta_1(w)\triangleq (Q_4^2-1)\big(Q_4 \overline{w}+w\big),\quad \beta_2(w)&\triangleq&-Q_4(2Q_4^2+1)-Q_4^2 w^2-Q_4^2\overline{w}^2,
 \end{eqnarray*}
 
 \begin{eqnarray*}
\beta_3(w)\triangleq2Q_4^2 \overline{w}+Q_4(1+Q_4^2)w\quad \hbox{ and} \quad \beta_4(w)&\triangleq&-Q_4^2.
 \end{eqnarray*}
 Now we compute for $n\in\Z$
\begin{eqnarray*}
  \fint_{\T}\log(|\alpha_{Q_4}(\tau)-\alpha_{Q_4}(w)|\tau^nd\tau&=& \fint_{\T}\log(|\tau-w|)\tau^nd\tau+ \fint_{\T}\log(|1-Q_4\tau w|)\tau^nd\tau\\
  &\triangleq&I_n(w)+J_n(w).
 \end{eqnarray*}
 First note that by a change of variables
 \[
 I_n(w)=w^{n+1}\frac{1}{2\pi}\int_{0}^{2\pi}\log|1-e^{i\theta}| e^{i(n+1)\theta} d\theta.
 \]
 From elementary trigonometric identities we write
 \[
  I_n(w)=w^{n+1}\frac{1}{4\pi}\int_{0}^{2\pi}\log\big(4\sin^2(\theta/2) \big) e^{i(n+1)\theta} d\theta.
 \]
 Using Lemma A.3 of \cite{4} we get
 \begin{equation*}
I_n(w)=\left\lbrace
\begin{array}{l}
-\frac{1}{2|n+1|}w^{n+1},\quad \hbox{if}\quad n\in\Z\backslash\{-1\}
 \\
0,\quad \hbox{if}\quad n=-1.
\end{array}
\right.
\end{equation*}
In addition
 \begin{eqnarray*}
 J_n(w)&=&\overline{w}^{n+1}\frac{1}{2\pi}\int_{0}^{2\pi}\log|1-Q_4 e^{i\theta}| e^{i(n+1)\theta} d\theta\\
 &=&\overline{w}^{n+1}\frac{1}{4\pi}\int_{0}^{2\pi}\log\big|1+Q_4^2-2Q_4 \cos(\theta)\big| e^{i(n+1)\theta} d\theta.
 \end{eqnarray*}
 Again from Lemma A.4 \cite{4} we obtain
  \begin{equation*}
J_n(w)=\left\lbrace
\begin{array}{l}
-\frac{1}{2|n+1|} Q_4^{|n+1|}\overline{w}^{n+1},\quad \hbox{if}\quad n\in\Z\backslash\{-1\}
 \\
0,\quad \hbox{if}\quad n=-1.
\end{array}
\right.
\end{equation*}
Putting together the preceding identities one finds
\begin{equation}\label{Idz9}
\fint_{\T}\log(|\alpha_{Q_4}(\tau)-\alpha_{Q_4}(w)|\tau^nd\tau=\left\lbrace
\begin{array}{l}
-\frac{1}{2|n+1|}\big(w^{n+1}+ Q_4^{|n+1|}\overline{w}^{n+1}\big),\quad \hbox{if}\quad n\in\Z\backslash\{-1\}
 \\
0,\quad \hbox{if}\quad n=-1.
\end{array}
\right.
\end{equation}
Using \eqref{Iden11} we find
\begin{eqnarray*}
I(w)&=&\sum_{k=0}^2\alpha_k(w)\big(I_k(w)+J_k(w)\big)+\sum_{k=1}^4\beta_k(w)\big(I_{-k}(w)+J_{-k}(w)\big)\\
&=&\sum_{k=0}^2\alpha_k(w)\big(I_k(w)+J_k(w)\big)+\sum_{k=2}^4\beta_k(w)\big(I_{-k}(w)+J_{-k}(w)\big).
\end{eqnarray*}
From straightforward calculation, and using the fact that $Q_4$ is a solution of $(\ref{equaQ4})$ with $m=4$, we obtain
\[
\Pi\,\hbox{Im}\Big\{\overline{w}\,\overline{\alpha_{Q_4}^\prime(w)} \sum_{k=0}^2\alpha_k\big(I_k+J_k\big)\Big\}=\frac{Q_4^2}{12}(5-Q_4^2)  e_4.
\]
Similarly we obtain
\[
\Pi\,\hbox{Im}\Big\{ \overline{w}\,\overline{\alpha_{Q_4}^\prime(w)} \sum_{k=2}^4\beta_k\big(I_{-k}+J_{-k}\big)\Big\}=\frac{Q_4^6-3Q_4^4-2Q_4^2}{12}  e_4.
\]
Consequently
\[
\Pi\, \hbox{Im} \Big\{F_1(Q_4,0) \Big\}=\frac{Q_4^2}{12}(Q_4^4-4Q_4^2+3) e_4.
\]
Therefore
\begin{equation}\label{Eqdev1}
\partial_\EE^2\psi(0,Q_4,t)=\frac{Q_4^2}{24}(Q_4^4-4Q_4^2+3) e_4.
\end{equation}
We next compute $\partial_s\partial_\EE\psi(0,Q_4,0)$. From \eqref{Idenz1} we write
\begin{eqnarray*}
\partial_s\partial_\EE\psi(0,Q_4,0)&=&\Pi\partial_f^2F_E(Q_4,0)[\partial_\EE\varphi(0,Q_4,0),v_4+\partial_s\varphi(0,Q_4,0)]\\
&+&\Pi\mathcal{L}_{Q_4}\partial_\EE\partial_s\varphi(0,Q_4,0).
\end{eqnarray*}
From \eqref{Eqsw134} and the structure of $\Pi$ we find
\begin{equation}\label{Eqszz1}
\partial_s\partial_\EE\psi(0,Q_4,0)=0.
\end{equation}
Similarly we find
\begin{eqnarray*}
\partial_Q\partial_s\psi(0,Q_4,0)&=&\Pi\partial_Q\partial_sF_E(0,Q_4,0)\\
&=&\partial_Q\big\{\Pi\partial_f F_E(Q,\varphi(0,Q,0))(v_4+\partial_s\varphi(0,Q,0))\big\}_{Q=Q_4}.
\end{eqnarray*}
Using \eqref{zz123} and \eqref{Eqsw136} we deduce that
\begin{eqnarray*}
\partial_Q\partial_s\psi(0,Q_4,0)&=&\Pi\big\{\partial_Q\mathcal{L}_Qv_4\big\}_{Q=Q_4}.
\end{eqnarray*}
This is the transversality assumption in the Crandall-Rabinowitz theorem. According to \cite{19} we have
\[
\Pi\big\{\partial_Q\mathcal{L}_Qv_4\big\}_{Q=Q_4}=4(Q_4+Q_4^3) e_4
\]
and thus
\begin{equation}\label{Zqsr1}
\partial_Q\partial_s\psi(0,Q_4,0)=4(Q_4+Q_4^3) e_4.
\end{equation}
To compute $\partial_Q\partial_\EE\psi(0,Q_4,0)$ we note from \eqref{zz123}, \eqref{Eqsw134} and the identity $\Pi\mathcal{L}_{Q_4}=0$ that 
\begin{eqnarray}\label{Ze1}
\nonumber \partial_Q\partial_\EE\psi(0,Q_4,0)&=&\Pi\partial_Q\big\{\partial_fF_E(Q,\varphi(0,Q,0))\partial_\EE\varphi(0,Q,0)\big\}_{Q=Q_4}\\
&=&0.
\end{eqnarray}

The computations of $\partial_s^2\psi(0,Q_4,0)$ can be performed using the formula
\begin{eqnarray*}
\partial_s^2\psi(0,Q_4,0)&=&\partial_{f}^2F_E(Q_4,0)[v_4,v_4]\\
&=& \left. \Big\{\frac{d^2}{ds^2}F_E(Q_4,sv_4)\Big\}\right|_{s=0}.
\end{eqnarray*}
Observe  from \eqref{Eq71} that
\begin{eqnarray*}
F_E\big(Q_4,sv_4\big)&=&G\big(0,{(1-Q_4^2)}/4,\alpha_{Q_4}+sv_4\big)\\
&=&-G_E\big({(1-Q_4^2)}/4,\alpha_{Q_4}+s v_4\big)
\end{eqnarray*}
with
\[G_E\big({1-Q_4^2}/4,\alpha_{Q_4}+s v_4\big)= \hbox{Im}\Big\{\Big( \frac{1-Q_4^2}{4} \big(\overline{\alpha_{Q_4}(w)}+s\overline{v_4(w)}\big)+I(s) \Big) w \big(\alpha_{Q_4}^\prime(w)+s v_4^\prime(w)\big)\Big\},
\]
\[I(s)\triangleq \frac12 \fint_{\T} \frac{\overline{A}+s\overline{B}}{A+sB}\big(\alpha_{Q_4}^\prime(\tau)+s v_4^\prime(\tau)\big) d \tau\]
and
\begin{eqnarray*}
A&=&\alpha_{Q_4}(\tau)-\alpha_{Q_4}(w)\\
B&=&v_4(\tau)-v_4(w).
\end{eqnarray*}
It is easy to obtain
\begin{eqnarray*}
\Big\{\frac{d^2}{ds^2}G_E({1-Q_4^2}/4,\alpha_{Q_4}+s v_4)\Big\}_{s=0}&=& \frac{1-Q_4^2}{2}\hbox{Im}\Big\{\overline{v_4(w)} w v_4^\prime(w)\Big\}+\hbox{Im}\Big\{ 2I^\prime(0) w v_4^\prime(w)+I^{\prime\prime}(0) w\alpha_{Q_4}^\prime(w)\Big\}\\
&\triangleq&\frac{1-Q_4^2}{2}\hbox{Im}\big\{I_1(w)\big\}+\hbox{Im}\big\{I_2(w)\big\}+\hbox{Im}\big\{I_3(w)\big\}.
\end{eqnarray*}
We start by computing $\Pi\,  \hbox{Im}\big\{I_1(w)\big\}$. It is
straightforward to show
\[
\overline{v_4(w)} w v_4^\prime(w)=\frac{w^2}{w^2-Q_4}\frac{-3Q_4 w^2+5}{(1-Q_4w^2)^2}.
\]
Set $z=w^2$. Then 
\[
\overline{v_4(w)} w v_4^\prime(w)=\frac{z}{z-Q_4}\frac{-3Q_4 z+5}{(1-Q_4z)^2}.
\]
Note that $z\mapsto \frac{z}{z-Q_4}\frac{-3Q_4 z+5}{(1-Q_4z)^2}$ is
holomorphic in the annulus of small radius $Q_4$ and large radius
$\frac{1}{Q_4}$, and therefore it admits a Laurent expansion in this
domain. To evaluate $\Pi\, \hbox{Im}\big\{I_1(w)\big\}$ it suffices to
compute the coefficients of $z^2$ and $\frac{1}{z^2}$ in that
expansion using the residue theorem. The coefficient of $z^2$ is given by
\[
a\triangleq \fint_{\T}\frac{1}{z-Q_4}\frac{-3Q_4 z+5}{(1-Q_4z)^2}\frac{dz}{z^2}.
\]
Using the change of variable $z\mapsto \frac1z$ we obtain
\begin{eqnarray*}
a&=&\fint_{\T}\frac{z^2}{1-Q_4z}\frac{5 z-3Q_4}{(z-Q_4)^2}{dz}\\
&\triangleq&\fint_{\T}\frac{g(z)}{(z-Q_4)^2}{dz}\\
&=&g^\prime(Q_4)
\end{eqnarray*}
with
\[g(z)\triangleq\frac{z^2(5 z-3Q_4)}{1-Q_4z}.
\]
Thus we obtain, 
\[
a=\frac{-7Q_4^4+9Q_4^2}{(1-Q_4^2)^2}.
\]
Now we move on to the coefficient of $\frac{1}{z^2}$ given by  the formula
\[
b\triangleq \fint_{\T}\frac{z^2}{z-Q_4}\frac{-3Q_4 z+5}{(1-Q_4z)^2}{dz}.
\]
Using the residue theorem we obtain
\[
b=Q_4^2\frac{-3Q_4^2+5}{(1-Q_4^2)^2} .
\]
Consequently
\begin{eqnarray}\label{Fg1}
\nonumber \frac{1-Q_4^2}{2}\Pi\,  \hbox{Im}\big\{I_1(w)\big\}&=&\frac{1-Q_4^2}{2}(a-b)e_4\\
&=&2Q_4^2 e_4.
\end{eqnarray}
Next we compute $\Pi \,\hbox{Im}\{I_2(w)\}$. First notice that
\[
2 I^\prime(0)=\fint_{\T} \frac{\overline{A}}{A} v_4^\prime(\tau) d \tau+\fint_{\T} \frac{A\overline{B}-B\overline{A}}{A^2}\alpha_{Q_4}^\prime(\tau) d \tau.
\]
We rewrite $I_2(w)$ as follows,
\[ I_2(w)=I_{21}(w)+I_{22}(w)+I_{23}(w),\]
where
\[I_{21}(w)=wv_4^\prime(w)\fint_{\T} \frac{\overline{A}}{A} v^\prime_4(\tau) d \tau\]
\[I_{22}(w)=wv_4^\prime(w)\fint_{\T} \frac{\overline{B}}{A}\alpha_{Q_4}^\prime(\tau) d \tau\]
and
\[I_{23}(w)=-wv_4^\prime(w)\fint_{\T} \frac{B\overline{A}}{A^2}\alpha_{Q_4}^\prime(\tau) d \tau.\]
Straightforward computations imply
\[
\fint_{\T} \frac{\overline{A}}{A} v_4^\prime(\tau) d \tau=\overline{w}\fint_{\T} \frac{Q_4w \tau-1}{\tau-Q_4\overline{w}} \frac{-3Q_4 \tau^6+5\tau^4}{(1-Q_4\tau^2)^2} d \tau.
\]

Hence by the residue theorem we obtain
\[
\fint_{\T} \frac{\overline{A}}{A} v_4^\prime(\tau) d \tau=\overline{w}^5({Q_4^2-1}) \frac{-3Q_4^7 \overline{w}^2+5Q_4^4}{(1-Q_4^3\overline{w}^2)^2}.
\]
Since we can extend $z\mapsto I_{21}(z)$ to a holomorphic function in the
annulus of small radius $Q$ and large radius $\frac{1}{Q_4}$, then $I_{21}$
admits a Laurent expansion in this domain. As before, to evaluate
$\Pi\, \hbox{Im}\big\{I_{21}(w)\big\}$ it suffices to compute the
coefficients of $z^4$ and $\frac{1}{z^4}$ in that expansion using
the residue theorem. The coefficient of $z^4$ is given by
\[\tilde{a}=\fint_{\T} w v_4^\prime(w)\overline{w}^5({Q_4^2-1}) \frac{-3Q_4^7 \overline{w}^2+5Q_4^4}{(1-Q_4^3\overline{w}^2)^2} \frac{dw}{w^5}.\]
Moreover, the coefficient of $\frac{1}{z^4}$ is given by
\[\tilde{b}=\fint_{\T}  w v_4^\prime(w)\overline{w}^5({Q_4^2-1}) \frac{-3Q_4^7 \overline{w}^2+5Q_4^4}{(1-Q_4^3\overline{w}^2)^2} w^3 dw.\]
One may thus deduce the following equality,
\begin{eqnarray}\label{bout1}
\nonumber \Pi \,\textnormal{Im} \lbrace I_{21}(w) \rbrace &=&(\tilde{a}-\tilde{b})e_4 \\
&=& \frac {{Q_4}^{6} \left( 45-58\,{Q_4}^{4}+21\,{Q_4}^{8} \right) }{({Q_4}^{4}-1)({Q_4}^{2}+1)}e_4.
\end{eqnarray}
As for  $I_{22}$, using the change of variable $\tau\mapsto \overline{\tau}$,
we obtain by the residue theorem
\begin{eqnarray*}
\fint_{\T} \frac{\overline{B}}{A}\alpha_{Q_4}^\prime(\tau) d \tau&=&-\overline{w}\fint_{\T}\frac{v_4(\tau)-v_4(\overline{w})}{\tau-\overline{w}}\frac{1-Q_4\tau^2}{1-Q_4\overline{w}\, \tau}\frac{d\tau}{\tau}\\&=&-v_4(\overline{w}).
\end{eqnarray*}
Again,  we can extend $z\mapsto I_{21}(z)$ to  a holomorphic function in the same annulus and thus we just need to compute the coefficients of $z^4$   and $\frac{1}{z^4}$ denoted  by  $\tilde{c}$ and  $\tilde{d}$, respectively: 
\[\tilde{c}=-\fint_{\T}  w v_4^\prime(w) v_4(\overline{w})\frac{dw}{w^5}\]
and
\[\tilde{d}=-\fint_{\T}  w v_4^\prime(w)v_4(\overline{w}) w^3 dw.\]
According to the residue theorem we obtain
\begin{eqnarray}\label{bout2}
\nonumber \Pi\,  \textnormal{Im} \lbrace I_{22}(w)\rbrace &=&(\tilde{c}-\tilde{d})e_4 \\
&=& \frac {4{Q_4}^{2} }{({Q_4}^{2}-1)}e_4.
\end{eqnarray}
Now we move on to the last term $I_{23}(w)$. The computations are very
tedious and we use the Maple symbolic manipulation package to obtain the
following expressions
\[-\fint_{\T} \frac{B\overline{A}}{A^2}\alpha_{Q_4}^\prime(\tau) d \tau=\frac{\sum_{i=0}^5 \tilde{\alpha}_{2i}(Q_4)w^{2i}}{\omega^3(\omega^2-Q_4^3)(Q_4^3+Q_4 \omega^4-(1+Q_4^4)\omega^2)}\]
where
\[\tilde{\alpha}_{0}(Q_4)\triangleq 3Q_4^7-4Q_4^9, \quad\tilde{\alpha}_{2}(Q_4)\triangleq4Q_4^{10}-4Q_4^8+7Q_4^6-5Q_4^4, \]
\[\tilde{\alpha}_4(Q_4)\triangleq Q_4^9-8Q_4^7+7Q_4^5-Q_4^3, \quad \tilde{\alpha}_6(Q_4)\triangleq Q_4^8-3Q_4^6+3Q_4^4-Q_4^2, \]
\[\tilde{\alpha}_8(Q_4)\triangleq3Q_4^3-2Q_4^5-Q_4\quad  \text{ and }\quad  \tilde{\alpha}_{10}(Q_4)\triangleq Q_4^2-1.\]
%

Let $\tilde{e}$ and $\tilde{f}$ be the coefficient of $z^4$ and $\frac{1}{z^4}$ in the Laurent expansion of $I_{23}(z)$.
Again using Maple, we obtain
\begin{eqnarray}\label{bout3}
\nonumber \Pi\, \textnormal{Im} \lbrace I_{23}(w) \rbrace &=&(\tilde{e}-\tilde{f})e_4 \\
 &=&-{\frac {Q_4^{2} \left( 21\,Q_4^{12}-65\,Q_4^{8}-12\,Q_4^{6}+47\,Q_4^{4}+12\,Q_4^{2}+5 \right) }{(Q_4^{4}-1)(Q_4^{2}+1)}}e_4.
\end{eqnarray}
Now using $(\ref{bout1})$, $(\ref{bout2})$ and $(\ref{bout3})$ we find
\begin{eqnarray}\label{boutI2}
  \Pi \,\textnormal{Im} \big\{ I_2(w) \big\}&=&\frac{1}{2}\frac {Q_4^{2} \left( 7\,Q_4^{4}-2\,Q_4^{2}-1 \right) }{ \left( Q_4^{2}-1 \right)  }e_4 .
 \end{eqnarray}

Now we compute $\textnormal{Im}\Big\{I_3(w)\Big\}$. First we notice that
\[I^{\prime\prime}(0)=\fint_{\T} \frac{B^2 \overline{A}-B \overline{B}A}{A^3} \alpha_{Q_4}^\prime(\tau) d\tau + \fint_{\T} \frac{\overline{B}A-B\overline{A}}{A^2} v_4^\prime(\tau) d\tau.  \]
Consequently, we can split  $I_3$ as follows,
\[I_3(w) =I_{31}(w)+I_{32}(w)+I_{33}(w)+I_{34}(w)\]
where
\[I_{31}(w)=w \alpha_{Q_4}^\prime(w)\fint_{\T} \frac{B^2 \overline{A}}{A^3} \alpha_{Q_4}^\prime(\tau) d\tau,\]
\[I_{32}(w)=w \alpha_{Q_4}^\prime(w)\fint_{\T} \frac{\overline{B}}{A} v_4^\prime(\tau) d\tau,\]
\[I_{33}(w)=-w \alpha_{Q_4}^\prime(w) \fint_{\T} \frac{B\overline{A}}{A^2} v_4^\prime(\tau) d\tau\]
and
\[I_{34}(w)=-w \alpha_{Q_4}^\prime(w)\fint_{\T} \frac{B \overline{B}}{A^2} \alpha_{Q_4}^\prime(\tau) d\tau.\]
In the following, we denote by  $a_{3i}$ and $b_{3i}$ the coefficient in front of $z^4$ and $\frac{1}{z^4}$  in the Laurent expansion of $I_{3i}(z)$ on the same annulus as before. We obtain the following expressions using Maple,
\begin{eqnarray}
\nonumber a_{31}&=&-\frac {Q_4^{2} \left( -12\,Q_4^{6}+8\,Q_4^{2}+26\,Q_4^{4}+3+12\,Q_4^{12}+4\,Q_4^{10}-33\,Q_4^{8} \right) }{(Q_4^{4}-1)^2  \left( Q_4^{2}+1 \right) }
\end{eqnarray}
and
\begin{eqnarray}
\nonumber  b_{31}&=&-\frac {Q_4^{6} \left( 78\,Q_4^{4}+8\,Q_4^{2}+3+56\,Q_4^{12}-129\,Q_4^{8}-20\,Q_4^{6}+12\,Q_4^{10} \right) }{ \left( Q_4^{4}-1 \right)^2  \left( Q_4^{2}+1 \right) }.
\end{eqnarray}
Therefore,
\begin{eqnarray}\label{bout4}
\nonumber \Pi\, \textnormal{Im} \lbrace I_{31}(w) \rbrace &=&(a_{31}-b_{31})e_4 \\
 &=& \frac { \left( 56\,Q_4^{12}+12\,Q_4^{10}-85\,Q_4^{8}-12\,Q_4^{6}+26\,Q_4^{4}+8\,Q_4^{2}+3 \right) Q_4^{2}}{ \left( Q_4^{2}+1 \right)  \left( -1+Q_4^{4} \right) }e_4. 
\end{eqnarray}

Similarly we obtain using Maple, 
\[\fint_{\T} \frac{\overline{B}}{A} v_4^\prime(\tau) d\tau=\frac{\sum_{i=0}^{5}\beta_{2i}(Q_4)w^{2i}}{w^5(w^2-Q_4^3)^2(w^2-Q_4)(Q_4^2-1)^2}\]
where

\[\beta_0(Q_4):=-6Q_4^9+3Q_4^7+3Q_4^{11}, \quad \beta_2(Q_4):=3Q_4^{10}-11Q_4^8+13Q_4^6-5Q_4^4,\]
\[\beta_4(Q_4):=3Q_4^9-11Q_4^7+13Q_4^5-5Q_4^3, \quad \beta_6(Q_4):=-6Q_4^6+13Q_4^4-5Q_4^2,\]
\[\beta_8(Q_4):=-Q_4^5+6Q_4^3-5Q_4 \text{ and }\beta_{10}(Q_4):=3Q_4^2-5.\]
The coefficients in the Laurent expansion have the following expressions,
\[a_{32}=0\]
and
\[b_{32}=\frac { \left( 7\,Q_4^{6}-9\,Q_4^{4}+3\,Q_4^{2}-5 \right) Q_4^{2}}{(\,Q_4^{2}-1)^2}.\]
Consequently,
\begin{eqnarray}\label{bout5}
\nonumber \Pi \textnormal{Im} \lbrace I_{32}(w) \rbrace &=&(a_{32}-b_{32})e_4 \\
&=& -\frac { \left( 7\,Q_4^{6}-9\,Q_4^{4}+3\,Q_4^{2}-5 \right) Q_4^{2}}{(\,Q_4^{2}-1)^2}e_4.
\end{eqnarray}

Thanks to Maple, one may find the following expression
\[I_{33}(w)=\frac{(w^2-Q_4)Q_4^4 \big(\sum_{i=0}^{6}\gamma_{2i}(Q_4)w^{2i}\big)}{w^6 (w^2-Q_4^3)(-(Q_4^{10}+3Q_4^6)w^2+3Q_4^3(Q_4^4+1)w^4-(3Q_4^4+1)w^6+Q_4^9+Q_4w^8)}\]

where 
\[\gamma_0(Q_4)=18Q_4^{12}-15Q_4^{10}, \quad \gamma_2(Q_4)=30Q_4^{11}-68Q_4^9+48Q_4^7-{18}Q_4^{13},\]
\[\gamma_4(Q_4)=-15Q_4^{12}-105Q_4^8+90Q_4^6+80Q_4^{10}-45Q_4^4, \quad \gamma_6(Q_4)=-12Q_4^{11}-136Q_4^7+122Q_4^5+66Q_4^9-40Q_4^3,\]
\[\gamma_8(Q_4)=-35Q_4^2+104Q_4^4+46Q_4^8-112Q_4^6, \quad \gamma_{10}(Q_4)=-30Q_4-64Q_4^5+86Q_4^3,\]
and
\[\gamma_{12}(Q_4)=30Q_4^2-25.\]

This allows one to obtain
\[a_{33}=\frac {Q_4^{6} \left( 9\,Q_4^{8}-26\,Q_4^{4}+25 \right) }{ \left( Q_4^{2}+1 \right)  \left( -1+Q_4^{4} \right) ^{2}}\]
and
\[b_{33}=\frac {Q_4^{6} \left( -131\,Q_4^{8}+56\,Q_4^{12}+78\,Q_4^{4}+5+12\,Q_4^{2}-24\,Q_4^{6}+12\,Q_4^{10} \right) }{ \left( Q_4^{2}+1 \right)  \left( -1+Q_4^{4} \right) ^{2}}.\]
Finally we obtain
\begin{eqnarray}\label{bout6}
\nonumber \Pi\, \textnormal{Im} \lbrace I_{33}(w) \rbrace &=&(a_{33}-b_{33})e_4 \\
&=& -4\,{\frac {Q_4^{6} \left( 14\,Q_4^{8}+3\,Q_4^{6}-21\,Q_4^{4}-3\,Q_4^{2}+5 \right) }{ \left( Q_4^{2}+1 \right)  \left( -1+Q_4^{4} \right) }}e_4.
\end{eqnarray}
For the last term, we  use Maple to obtain
\[-\fint_{\T} \frac{B \overline{B}}{A^2} \alpha_{Q_4}^\prime(\tau) d\tau= -\frac{\sum_{i=0}^5 \xi_{2i}(Q_4)w^{2i}}{w^5(w^2-Q_4^3)\big((Q_4^4+Q_4^2+1)w^2(Q_4-w^2)+Q_4(w^6-Q_4^3)\big)}\]

where
\[\xi_{0}(Q_4)=-3Q_4^7, \quad \xi_{2}(Q_4)=3Q_4^8-3Q_4^6+5Q_4^4,\]
\[\xi_{4}(Q_4)=3Q_4^7-8Q_4^5+5Q_4^3, \quad \xi_{6}(Q_4)=3Q_4^6-8Q_4^4+5Q_4^2,\]
\[\xi_{8}(Q_4)=-6Q_4^3+5Q_4 \text{ and } \xi_{10}(Q_4)=2Q_4^2+5.\]
Thus we deduce using Maple once again 
\[a_{34}=-\frac{Q_4^2(3Q_4^2-5)}{(Q_4^2-1)^2}\]
and
\[b_{34}=-\frac{Q_4^2(4Q_4^6-4Q_4^4+3Q_4^2-5)}{(Q_4^2-1)^2}.\]
Therefore we find
\begin{eqnarray}\label{bout7}
\nonumber \Pi\, \textnormal{Im} \lbrace I_{34}(w) \rbrace &=&(a_{34}-b_{34})e_4 \\
&=& \frac{4Q_4^6}{(Q_4^2-1)}e_4.
\end{eqnarray}
Finally, using $(\ref{bout4})$, $(\ref{bout5})$,$(\ref{bout6})$ and $(\ref{bout7})$ one finds
\begin{equation}\label{boutI3}
 \Pi\, \textnormal{Im} \lbrace I_{3}(w) \rbrace =-2\,{\frac {Q_4^{2} \left( 2\,Q_4^{6}-4\,Q_4^{4}+Q_4^{2}-1 \right) }{ \left( -1+Q_4^{2} \right) ^{2}}}e_4.\\
\end{equation}
Now combining  $(\ref{Fg1})$,$\ref{boutI2})$ with $(\ref{boutI3})$  and
simplifying the polynomial equation of $Q_4$, we obtain
\begin{eqnarray*}
\partial_t^2\psi(0,Q_4,0)&=&-  \frac {Q_4^2 \left( 3\,Q_4^{6}+\,Q_4^{4}-5\,Q_4^{2}+5 \right) }{ \left( -1+Q_4^{2} \right) ^{2}}e_4\\
\end{eqnarray*}
To summarize, up to this point we have proved that the V-state equation \eqref{Idenz1} reduces in the small neighbourhood $I_\delta\triangleq (-\delta,\delta)\times (-\delta+Q_4,Q_4+\delta)\times (-\delta,\delta)$ to the finite-dimensional equation
\begin{equation}\label{zero}
 \psi(\EE,Q,s)=0.
 \end{equation}
As $\psi$ is at least $\mathcal{C}^3$, we can use a Taylor expansion with the integral  form for the remainder around the point $(0,Q_4,0)$,
\begin{eqnarray}
\nonumber \psi(\EE,Q,s)&=&\psi(0,Q_4,0)+s\partial_s\psi(0,Q_4,0)+\EE \partial_{\EE}\psi(0,Q_4,0)+(Q-Q_4)\partial_Q\psi(0,Q_4,0)+\frac{s^2}{2}\partial_{ss}^2 \psi(0,Q_4,0)\\
\nonumber &+&\frac{\EE^2}{2}\partial_{\EE}^2 \psi(0,Q_4,0)+\frac{(Q-Q_4)^2}{2}\partial_{Q}^2 \psi(0,Q_4,0)+\EE(Q-Q_4)\partial_{\EE Q}^2 \psi(0,Q_4,0)+\EE s \partial_{\EE s}^2 \psi(0,Q_4,0)\\
\nonumber&+&(Q-Q_4) s \partial_{Q s}^2 \psi(0,Q_4,0)+\tilde{\EE}(\EE,Q,s)e_4
\end{eqnarray}
where
\[ \tilde{\EE}(\EE,Q,s) e_4=\int_0^1 \frac{(1-\theta )^2}{2!}D^3\psi\big(\theta\EE,Q_4+\theta(Q-Q_4),\theta s\big)\big(\EE,Q-Q_4,s\big)^3d\theta.\]
For a given vector $h$  we   use the notation $h^3$ to denote $(h,h,h)$.
Consequently, using the preceding  computations concerning the quadratic expansion, we obtain for any $( \EE,Q,s)\in I_\delta$
\[ \psi(\EE,Q,s)=\Big[a\EE^2+b\,s(Q-Q_4)+cs^2 + \tilde{\EE}(\EE,Q,s) \Big] e_4\]
with
\[a=\frac{Q_4^2(Q_4^4-4Q_4^2+3)}{48} ,\; b=4Q_4(Q_4^2+1)\]
and
\[c=-  \frac {Q_4^2 \left( 3\,Q_4^{6}+\,Q_4^{4}-5\,Q_4^{2}+5 \right) }{ \left( -1+Q_4^{2} \right) ^{2}}.\]
Since $Q_4=\sqrt{\sqrt{2}-1}$, one may easily check that
\[a>0, \; b>0 \text{ and } c<0.\]
We introduce the parameters $\tilde{c}=-c$, $\tilde{b}=-\frac{b^2}{4c}$ and $d=-\frac{b}{2c}$, which are three positive constants, and write equation \eqref{zero} in the new variables  ${\bf{Q}}=Q-Q_4$ 	and $X=s-d{\bf{Q}}$. In this
way, we find in a small neighbourhood of zero,
\begin{equation}\label{RI}
 \hat{\psi}(\EE,{\bf{Q}},X) \triangleq \Big[a\EE^2-\tilde{c}X^2+\tilde{b}{\bf{Q}}^2 + \hat{\EE}(\EE,{\bf{Q}},X) \Big] e_4=0
 \end{equation}
 where 
\[ \hat{\EE}(\EE,{\bf{Q}},X)=\tilde{\EE}(\EE,Q,s).\]
For the quadratic equation
\[a\EE^2-\tilde{c}X^2+\tilde{b}{\bf{Q}}^2=0\]
we find for given $\EE \neq 0$ two disjoint curves 
\[X= \pm \sqrt{\frac{a}{\tilde{c}}\EE^2+\frac{\tilde{b}}{\tilde{c}}{\bf{Q}}^2}.\]
We prove that this structure persists for the full equation,
\begin{equation}\label{RI2}
a\EE^2-\tilde{c}X^2+\tilde{b}^2 {\bf{Q}}^2+ \hat{\EE}(\EE,{\bf{Q}},X)=0.
\end{equation}
To this end, we check that the solutions have  the form
\[X= \sqrt{\frac{a}{\tilde{c}}\EE^2+\frac{\tilde{b}}{\tilde{c}}{\bf{Q}}^2}+y\]
where $y$ is a small correction described shortly below.
From $(\ref{RI2})$  we  deduce that $y$ satisfies the equation
$$
G({\bf{Q}},y)=y,
$$
with
 $$G({\bf{Q}},y)=-\frac{y^2}{2\sqrt{\frac{a}{\tilde{c}}\EE^2+\frac{\tilde{b}}{\tilde{c}}{\bf{Q}}^2}}+\frac{\hat{\EE}\big(\EE,{\bf{Q}},\sqrt{\frac{a}{\tilde{c}}\EE^2+\frac{\tilde{b}}{\tilde{c}}{\bf{Q}}^2}+y\big)}{2 \tilde{c} \sqrt{\frac{a}{\tilde{c}}\EE^2+\frac{\tilde{b}}{\tilde{c}}{\bf{Q}}^2}}.$$
      We prove the following lemma,
      
 \begin{lemma}\label{existence-y}
 There exist two   strictly positive constants $\eta$ and $\EE_0<1$ such that   for any  $ 0<\EE  \leq \EE_0$
 \[ \begin{array}{lll}
    G: &\overline{B}_{\eta \EE^{\frac{5}{6}}} \times  \overline{B}_{\eta \EE^{\frac{3}{2}}}     &\longrightarrow\overline{B}_{\eta \EE^{\frac{3}{2}}} \\
         &({\bf{Q}},y) &\longmapsto G({\bf{Q}}, y)
    \end{array} \]
is well-defined. Moreover, for any ${\bf{Q}}\in \overline{B}_{\eta \EE^{\frac{5}{6}}}$,  $G$ admits a unique  fixed point $y({\bf{Q}})$ which depends continuously on ${\bf{Q}}$.
 \end{lemma}
\begin{proof}
We begin with the simple  inequality,

\[\vert G({\bf{Q}},y) \vert \leq \frac{y^2}{2\sqrt{\frac{a}{\tilde{c}}\EE^2+\frac{\tilde{b}}{\tilde{c}}{\bf{Q}}^2}} +\left| \frac{\hat{\EE}(\EE,{\bf{Q}},\sqrt{\frac{a}{\tilde{c}}\EE^2+\frac{\tilde{b}}{\tilde{c}}{\bf{Q}}^2}+y)}{2\sqrt{\frac{a}{\tilde{c}}\EE^2+\frac{\tilde{b}}{\tilde{c}}{\bf{Q}}^2}}\right| .\]
It is plain that there exists  $C>0$  such that,
\[\frac{y^2}{2 \tilde{c} \sqrt{\frac{a}{\tilde{c}}\EE^2+\frac{\tilde{b}}{\tilde{c}}{\bf{Q}}^2}} \leq C \eta^2 \EE^2.\]
For the second  term we use the cubic form of the remainder and its continuity,
\[\Big|\hat{\EE}(\EE,{\bf{Q}},\sqrt{\frac{a}{\tilde{c}}\EE^2+\frac{\tilde{b}}{\tilde{c}}{\bf{Q}}^2}+y)\Big|\leq C \eta^3 \EE^{\frac{5}{2}}.\]
It follows that,
\[ \vert G({\bf{Q}},y)\vert \leq C \eta^2 \EE^2 + C \eta^3 \EE^{\frac{3}{2}}.\]
Choosing $\eta$ such that \begin{equation}\label{eta}
C \eta+ C \eta^2 \leq 1
\end{equation}
we guarantee  that  $G$ is  well-defined. In order to apply the Banach fixed point theorem with a parameter, we need only check that $G$ is a contraction. Let $y$ and $\tilde{y}$ be two elements of
$\overline{B}_{\eta \EE^{\frac{3}{2}}}$, then we have
\begin{eqnarray}
\nonumber \vert G({\bf{Q}},y)-G({\bf{Q}},\tilde{y}) \vert &\leq &\left|  \frac{(y+\tilde{y})(\tilde{y}-y)}{2\sqrt{\frac{a}{\tilde{c}}\EE^2+\frac{\tilde{b}}{\tilde{c}}{\bf{Q}}^2}}\right| \\
\nonumber & + &\left| \frac{\hat{\EE}\bigg(\EE,{\bf{Q}},\sqrt{\frac{a}{\tilde{c}}\EE^2+\frac{\tilde{b}}{\tilde{c}}{\bf{Q}}^2}+y\bigg)-\hat{\EE} \bigg(\EE,{\bf{Q}},\sqrt{\frac{a}{\tilde{c}}\EE^2+\frac{\tilde{b}}{\tilde{c}}{\bf{Q}}^2}+\tilde{y}\bigg)}{2 \tilde{c} \sqrt{\frac{a}{\tilde{c}}\EE^2+\frac{\tilde{b}}{\tilde{c}}{\bf{Q}}^2}}\right| .
\end{eqnarray}
For the first term we write 
\[ \left|  \frac{(y+\tilde{y})(\tilde{y}-y)}{2\sqrt{\frac{a}{\tilde{c}}\EE^2+\frac{\tilde{b}}{\tilde{c}}{\bf{Q}}^2}}\right|  \leq   C  \eta  \EE^{\frac{1}{2}}\vert \tilde{y}-y\vert.\]

As for the second term we split it into two parts
$$
 \hat{\EE}\Big(\EE,{\bf{Q}},{\scriptstyle{\sqrt{\frac{a}{\tilde{c}}\EE^2+\frac{\tilde{b}}{\tilde{c}}{\bf{Q}}^2}+y}}\Big)-\hat{\EE}\Big(\EE,{\bf{Q}},{\scriptstyle{\sqrt{\frac{a}{\tilde{c}}\EE^2+\frac{\tilde{b}}{\tilde{c}}{\bf{Q}}^2}+\tilde{y}}}\Big)= T_1(\EE,{\bf{Q}},\tilde{y},y)+T_2(\EE,{\bf{Q}},\tilde{y},y)
$$
with
$$
 T_1(\EE,{\bf{Q}},\tilde{y},y)=\int_0^1{\scriptstyle{ \frac{(1-\theta )^2}{2!}}}\Big[D^3\psi\big(\beta(\theta,\EE,{\bf Q}, y)\big)-D^3\psi\big(\beta(\theta,\EE,{\bf Q}, \tilde y)\big)\Big]\big[v(\EE,{\bf Q}, y)\big]^3d\theta
$$
where
$$
\beta(\theta,\EE,{\bf Q}, y)=\bigg(\theta \EE,Q_4+\theta {\bf{Q}},\theta\Big({\scriptstyle{\sqrt{\frac{a}{\tilde{c}}\EE^2+\frac{\tilde{b}}{\tilde{c}}{\bf{Q}}^2}+y+d {\bf{Q}}}} \Big)\bigg), \quad v(\EE,{\bf Q}, y)=\begin{pmatrix} 
\EE  \\
{\bf{Q}}\\
{\scriptstyle{\sqrt{\frac{a}{\tilde{c}}\EE^2+\frac{\tilde{b}}{\tilde{c}}{\bf{Q}}^2}+y+d {\bf{Q}}}},
\end{pmatrix}
$$
and
\begin{eqnarray*}
\nonumber T_2(\EE,{\bf{Q}},\tilde{y},y)&=&\int_0^1 {\scriptstyle{\frac{(1-\theta)^2}{2!}}}\Big(D^3\psi\big({\scriptstyle{\beta(\theta,\EE,{\bf Q}, \tilde y)}}\big)
\big[v(\EE,{\bf Q}, y)\big]^3d\theta-D^3\psi\big({\scriptstyle{\beta(\theta,\EE,{\bf Q}, \tilde y}})\big)
\big[v(\EE,{\bf Q}, \tilde y)\big]^3\Big)d\theta.
\end{eqnarray*}
According to Remark \ref{rmkk1} and \eqref{Eqww1} one has that  $\partial_yD^3 \psi$  is continuous. This implies by virtue of the  mean value theorem   that \begin{eqnarray*}
\left| T_1(\EE,Q,\tilde{y},y) \right| &\leq& C|y-\tilde y|\big(\EE^3+|{\bf Q }|^3+|y|^3\big)\\
&\le&C\vert y - \tilde{y}\vert\big( \EE^3+ \eta^3 \EE^{\frac{5}{2}}+\eta^3 \EE^{\frac{9}{2}}\big)\\ 
&\le&C\vert y - \tilde{y}\vert\big( \EE^3+ \eta^3 \EE^{\frac{5}{2}}\big). 
\end{eqnarray*}
For the term $T_2$ we use the multi-linear structure of $D^3\psi$ which gives
\begin{eqnarray*} \left| T_2(\EE,Q,\tilde{y},y) \right| &\leq& C\big(\EE^2+{\bf Q}^2+y^2\big)|y-\tilde y|\\
&\le&C\big(\EE^2+ \eta^2 \EE^{\frac{5}{3}}\big) \vert y - \tilde{y}\vert. 
\end{eqnarray*}

Finally, we have the following inequality, since $ \EE  , \eta\in[0,1]$,
\begin{eqnarray*} \vert G(Q,y)-G(Q,\tilde{y})\vert &\leq &C\vert y - \tilde{y}\vert\big( \EE^3+ \eta^3 \EE^{\frac{5}{2}}+\eta^2 \EE^{\frac{5}{3}}\big)\\
&\le& C\vert y - \tilde{y}\vert\big( \EE^3+\eta^2 \EE^{\frac{5}{3}}\big).
\end{eqnarray*}
For the choice of  $\eta$ made before in \eqref{eta}, it suffices to fix $\EE_0$ such that 
\begin{equation*}
C\big( \EE_0^3+\eta^2 \EE_0^{\frac{5}{3}}\big)<1
\end{equation*}
in order to guarantee  that $G$ is a contraction and hence admits a unique fixed point $y({\bf{Q}})\in \overline{B}_{\eta \EE^{\frac{3}{2}}}$. The continuity dependence with respect to ${\bf Q}$ is classical and follows from the fixed point theorem with a parameter. 
This achieves the proof of the lemma.
\end{proof}
Therefore equation \eqref{RI} admits a solution in the form
$$
X=X_+({\bf Q})=\sqrt{\frac{a}{\tilde{c}}\EE^2+\frac{\tilde{b}}{\tilde{c}}{\bf{Q}}^2}+y({\bf{Q}}).
$$

Reproducing the same analysis  we can prove that  the equation \eqref{RI} admits another solution of the form 
$$
X_{-}({\bf Q})=-{{\sqrt{\frac{a}{\tilde{c}}\EE^2+\frac{\tilde{b}}{\tilde{c}}{\bf{Q}}^2}}}+\widehat{y}({\bf{Q}}),\quad \widehat{y}({\bf Q}) \in \overline{B}_{\eta \EE^{\frac{3}{2}}}.
$$
It remains only to check that the curves ${\bf Q}\in \overline{B}_{\eta \EE^{\frac{5}{6}}}\mapsto  X_{\pm}$ are disjoint graphs. For this we write, by the triangular inequality,
\begin{eqnarray*}
 |X_+({\bf{Q}})-X_{-}({\bf Q})| & \geq &2 \sqrt{\frac{a}{\tilde{c}}\EE^2+\frac{\tilde{b}}{\tilde{c}}{\bf{Q}}^2}-\big(|y({\bf{Q}})|+|\widehat{y}({\bf Q}) |\big)\\
&\geq& C\big( \vert \EE \vert   - 2\eta\vert \EE \vert ^{\frac{3}{2}}\big)\\
& > & C\EE 
\end{eqnarray*}
if $\EE $ is small enough. Coming back to the initial unknowns,  we find two disjoint curves of solutions to equation \eqref{zero}. The proof of Theorem \ref{thmbreak} is now complete.
\end{proof}

\section{Conclusions}
\label{sec:conc}
In this paper we have described both numerically and analytically the
bifurcation structure of two-fold and three-fold symmetric
singly-connected vortex patch equilibria for the Quasi-Geostrophic
Shallow-Water equations.  The numerical results reveal that the branch
of solutions for two-fold symmetric equilibria, consisting of a main
branch of Kirchhoff elliptical vortices and secondary branches
bifurcating at the Love instability points for the Euler equations,
likely separates into an infinite set of disjoint branches for any
finite value of the Rossby deformation length $\EE^{-1}$.  This is
confirmed for $\EE\ll 1$ by mathematical analysis for the first
separation near the Love instability point for elliptical azimuthal
wavenumber $m=4$.\\

The numerical results for the three-fold symmetric equilibria also
reveal a separated branch, in this case existing for all $\EE$.  The
mathematical analysis confirms that solutions exist for small $\EE$
near the Euler limit ($\EE=0$); these solutions are near circular in
form.  The separated solutions are beyond the scope of this analysis.
In a future work, we will report on the stability and nonlinear
evolution of these solutions.\\

\begin{ackname}
DGD received support for this research from the UK Engineering
and Physical Sciences Research Council (grant number EP/H001794/1).
TH is partially supported by the the ANR project Dyficolti ANR-13-BS01-0003- 01.\end{ackname}

\end{document}